\theoremstyle{plain}
\newtheorem{thm}{Theorem}[section]
\newaliascnt{cor}{thm}
\newaliascnt{prop}{thm}
\newaliascnt{lem}{thm}
\newtheorem{cor}[cor]{Corollary}
\newtheorem{prop}[prop]{Proposition}
\newtheorem{lem}[lem]{Lemma}
\theoremstyle{definition}
\newaliascnt{defn}{thm}
\newaliascnt{asu}{thm}
\newaliascnt{con}{thm}
\newtheorem{defn}[defn]{Definition}
\newcounter{stp}
\newcounter{stpi}
\newcounter{stpci}
\newcounter{stpiii}
\theoremstyle{thm}
\newaliascnt{rem}{thm}
\newaliascnt{exa}{thm}
\newaliascnt{masu}{thm}
\newaliascnt{nota}{thm}
\newaliascnt{sett}{thm}
\newtheorem{rem}[rem]{Remark}
\numberwithin{equation}{section}
\setlist[enumerate]{font = \normalfont}
\newcommand {\N}	{\mathbb{N}} %
\newcommand {\Z}	{\mathbb{Z}}
\newcommand {\R}	{\mathbb{R}}
\newcommand {\C}	{\mathbb{C}}
\newcommand {\T}	{\mathbb{T}}
\newcommand{\sD}{\mathcal{D}}
\newcommand{\sF}{\mathcal{F}}
\newcommand{\sS}{\mathcal{S}}
\renewcommand{\H}{\mathrm{H}}
\newcommand{\loc}{\mathrm{loc}}
\newcommand{\dk}[1]{\partial_{#1}}
\newcommand{\dt}{\dk{t}} 
\newcommand{\dz}{\dk{z}}
\newcommand{\p}{\partial}
\newcommand{\dH}{\nabla_{\mathrm{H}}}
\newcommand{\xH}{x_\mathrm{H}}
\renewcommand{\div}{\mathrm{div} \, }
\newcommand{\divH}{\mathrm{div}_{\H} \,}
\newcommand{\dBrpq}[3]{\dot{B}^{{#1}}_{{#2}, {#3}}}  %
\newcommand{\LH}[1]{L_{\mathrm{H}}^{#1}}  %
\newcommand{\dBrpqH}[3]{\dot{B}^{{#1}}_{{#2}, {#3}, {\mathrm{H} }}} %
\newcommand{\dBrpqz}[3]{\dot{B}^{{#1}}_{{#2}, {#3}, {z}}} %
\newcommand{\vp}{\varphi}
\newcommand{\vpt}{\widetilde{\varphi}}
\newcommand{\disp}{\displaystyle}
\renewcommand{\exp}[1] { \operatorname{exp}{ \left( #1 \right) } }
\renewcommand{\mid}{\,;\,}
\DeclareMathOperator*{\supp}{supp}
\date{\today}
\title[Uniqueness of weak solutions to the Primitive equations]{Uniqueness of weak solutions to the primitive equations in some anisotropic spaces}
\author{Tim Binz}
\address{Technische Universit\"at Darmstadt\\
        Fachbereich Mathematik\\
        Schlossgartenstr.\ 7\\
        64289 Darmstadt\\
        Germany}
\email{binz@mathematik.tu-darmstadt.de}
\author{Yoshiki Iida}
\address{Department of Pure and Applied Mathematics\\
	Graduate School of Fundamental Science and Engineering\\
	Waseda University\\
	169-8555 Tokyo \\
	Japan}
\email{yoshiki-i737@asagi.waseda.jp}
\keywords{Primitive equations, conditional uniqueness, anisotropic spaces, Besov spaces, scaling invariance}
\begin{document}

\begin{abstract}
We consider the 3D or 2D primitive equations for oceans and atmosphere in the isothermal setting. 
In this paper, we establish a new conditional uniqueness result for weak solutions to the primitive equations, that is, if a weak solution belongs some scaling invariant function spaces, and satisfies some additional assumptions, then the weak solution is unique. In particular, our result can be obtained as different one from $z$-weak solutions framework by adopting some anisotropic approaches with the homogeneous toroidal Besov spaces.
As an application of the proof, we establish the energy equality for weak solutions in the uniqueness class given in the main theorem.
\end{abstract}

\maketitle 

\tableofcontents

\section{Introduction}
\label{sec:intro}

In this paper, we consider the following three-dimensional (full-viscous) primitive equations for oceans and the atmosphere with no Coriolis force in the isothermal setting:
\begin{align}
\left\{
\begin{aligned}\label{eq:PE} 
\dt v + (u\cdot \nabla) v - \Delta v + \dH \Pi &= 0, \\ 
\dz \Pi &= 0, \\ 
\div u &= 0. 
\end{aligned}
\right. \tag{3D PE}
\end{align}
The unknown functions $v: \Omega \to \R^2$, $w: \Omega \to \R$ and $\Pi: \Omega \to \R$ denote the horizontal velocity, the vertical velocity, and the scaler pressure of the fluid, respectively. We write $u={}^\top(v, w)$. 
The operator $\nabla_\H$ denotes the horizontal gradient.
The domain $\Omega$ is described below.
The primitive equations are derived from the (rotating) incompressible Navier--Stokes equations with the hydrostatic balance assumption for the pressure term in the vertical direction.
The rigorous justification of the hydrostatic approximation of the primitive equations are studied in \cite{FGHHKW2020, FGK2021, LT2018}.
\par
The physical boundary problem for the primitive equations \eqref{eq:PE} is usually studied in the layer domain 
\[
D_h \equiv \{ (x_\H, z) \in \R^{2}\times \R \mid -h<z<0 \} \qquad (h>0)
\]
subject to the horizontally periodic condition 
\[
\text{$v$, $w$ and $p$ are $2L$ periodic in $x_\H$ for some $L>0$,}
\]
and the no-vertical flow conditions $w|_{z=-h, 0}=0$. Furthermore, the boundary conditions $\dz v|_{z=-h, 0}=0$ are sometimes imposed. In this setting, extending $v$ (resp.~$w$) evenly (resp.~oddly) in $z$ component with respect to $z=0$ and to be defined on $(-L, L)^{2}\times (-h, h)$, then the extended velocity $v$ (resp.~$w$ is periodic and even (resp.~odd). In this paper, we focus on only periodicity of the velocities $v$ and $w$ and we consider the system \eqref{eq:PE} on $\Omega = \T^2 \times \T = \T^3$. 
\par

We also consider the following two-dimensional primitive equations for the \emph{three}-dimensional geophysical fluid motion:
\begin{align}
\left\{ 
\begin{aligned}\label{eq:PE2}
\dt v + ({}^\top(v^1, w)\cdot \nabla_{x_1, z}) v - \Delta v + (\p_{x_1} \Pi)e_1 &= 0, \\
\dz \Pi &= 0, \\
\p_{x_1} v^1 + \dz w &= 0.
\end{aligned}
\right. \tag{2D PE}
\end{align}
Here, $v: \Omega' \ni (x_1, z)\mapsto (v^1(x_1, z), v^2(x_1, z))\in \R^2$ and $w: \Omega' \ni (x_1, z)\mapsto w(x_1, z)\in\R$ denote the horizontal velocity and the vertical velocity, respectively. The operator $\nabla_{x_1, z}$ is the gradient $\nabla_{x_1, z}={}^\top(\p_{x_1}, \dz)$, and the unit vector $e_1$ is defined as $e_1={}^\top(1, 0)$.
The domain $\Omega'$ is also described as follows.
As in the three-dimensional case, originally we consider \eqref{eq:PE2} in the layer domain
\[
	D_h' \equiv \{ (x_1, z) \in \R\times \R \mid -h<z<0 \} \qquad (h>0)
\]
under the horizontal boundary conditions
\[
	\text{$v$, $w$ and $p$ are $2L$ periodic in $x_1$ for some $L>0$,}
\]
the no-vertical flow conditions $w|_{z=-h, 0}=0$, and the Neumann conditions in $z$ $\dz v|_{z=-h, 0}=0$. After that, extending $v$ and $w$ on $(-L, L)\times (-h, h)$ in $z$ direction as in three-dimensional case, and taking $L=h=\pi$, we consider \eqref{eq:PE2} on $\Omega'=\T^2$.
\par

Lions--Temam--Wang \cite{LTW1992-NewPE, LTW1992-OnLSO, LTW1995} started mathematical studies for the primitive equations, where they showed the existence of global weak solutions in three-dimensional case, by the Galerkin method. Later, Guill\'{e}n Gonz\'{a}lez--Masmoudi--Rodr\'{i}guez Bellido \cite{GMR2001} established the local well-posedness for large initial data in $H^1$ and the global well-posedness for small data in $H^1$. In their seminal work Cao--Titi \cite{CT2007} showed the global well-posedness for the three-dimensional primitive equations for {\it arbitrary large} initial data in $H^1$, by deriving a suitable a priori estimate for the strong solution obtained by \cite{GMR2001}. 
Hieber--Kashiwabara \cite{HK2016} showed the global well-posedness in three dimensional case for large initial data in $H^{2/p, p}$ ($6/5\leq p <\infty$). The assumption $6/5\leq p <\infty$ was relaxed to $1 < p < \infty$ in the work of Hieber--Hussein--Kashiwabara \cite{HHK2016}.
In \cite{HK2016}, they introduced an analytic semigroup (so-called the hydrostatic Stokes semigroup) which yields the solution to the linearized equation of \eqref{eq:PE}, and they proved the local well-posedness according to the method of Kato \cite{K1984}. 
Further properties of the hydrostatic semigroup were investigated in \cite{GGHHK2017, GGHHK2020, GGHHK2021}.
If we take the limit $p\to \infty$ for the initial value space $H^{2/p, p}$ in \cite{HK2016}, then $H^{2/p, p}$ tends to $L^\infty$.
Giga--Gries--Hieber--Hussein--Kashiwabara \cite{GGHHK2021} considered the initial value problem in the scaling invariant space $L^\infty_\H (L^1_z)$, which is larger than $L^\infty$. \par
The existence of the time periodic solutions to the primitive equations for time periodic external forces was discussed in \cite{GHK2017, HS2013, M2010-TPE}. \par
However, the uniqueness of weak solutions to the primitive equations is still an open problem, even for two-dimensional case unlike the Navier--Stokes equations.
Bresch--Guill\'{e}n Gonz\'{a}lez--Masmoudi--Rodr\'{i}guez Bellido \cite{BGMR2003}, Bresch--Kazhikhov--Lemoine \cite{BKL2005}, Petcu \cite{P2007}, and Tachim Medjo \cite{M2010} introduced $z$-weak solution, as a class of weak solutions for which global existence and the uniqueness hold.
Roughly saying, a weak solution $v$ to the primitive equations is said $z$-weak solution if $v$ satisfies the additional regularity 
\[
	\dz v \in L^\infty(0, \infty; L^2) \cap L^2(0, \infty; \dot{H}^1).
\]
In particular, for three-dimensional case, \cite{M2010} showed the global existence and uniqueness of $z$-weak solutions for initial data $v_0\in \{ V \in L^6(\Omega) \mid \dz V \in L^2(\Omega)\}$. Kukavika--Pei--Rusin--Ziane \cite{KPRZ2014} proved the uniqueness for continuous initial data, and Li--Titi \cite{LT2017} proved for initial data $v_0\in \{ V\in L^6(\Omega) \mid \dz V \in L^2(\Omega) \}$ including a small perturbation in $L^\infty$-norm.
\cite{LT2017} also proved that any weak solutions are smooth for $t>0$ by weak-strong uniqueness argument.
Following \cite{LT2017}, Ju \cite{J2021} showed that, for a weak solution $v$ to the two-dimensional primitive equations, if $\dz v$ belongs to $L^4(0, T; L^2)$ or $L^2(0, T; L_{x_1}^{\infty}L^2_z)$, then the uniqueness holds. This result differs from $z$-weak solutions framework. 
The uniqueness problem for weak solutions to the primitive equations was also discussed in \cite{BGMR2003, BKL2005, J2017}. 
\par
Recently, the preprint written by Boutros--Markfelder--Titi \cite{BMT2023} constructed infinitely many generalized weak solutions for the same initial data in $H^1$. Their result also shows the nonuniqueness of generalized weak solutions to the primitive equations. Besides, they treated the hydrostatic Euler equations (as known as the inviscid primitive equations) and two-dimensional Prandtl equations. Their proof followed the convex integration scheme, which is used for solving Onsager's conjecture by Isset \cite{I2018} and Buckmaster--de Lellis--Sz\'{e}kelyhidi--Vicol \cite{BLSV2019}, and for proving nonuniqueness of very weak solutions to the 3D Navier--Stokes euqations by Buckmaster--Vicol \cite{BV2019}.

In this paper, we establish a new class which the uniqueness of weak solutions holds, away from the framework of $z$-weak solutions. We emphasize that in all previous works weak solutions \emph{a-priori} satisfying the energy inequality are considered, whereas this article deals with merely weak solutions which satisfy the energy (in)equality \emph{a posteriori}.
More specifically, we derive the uniqueness by focusing on scale invariance of solution space.
This approach is based on the point of view of Serrin \cite{S1963} for the Navier--Stokes equations. 
Indeed, \cite{KS1996, KT2000, M1984, S1963} showed that weak solutions to the Navier--Stokes equations in a scaling invariant space (so-called the Serrin class) are unique and smooth.
Since the primitive equations essentially have anisotropy for the vertical direction, it is difficult to treat the vertical velocity $w$. 
Indeed, for the three-dimensional case, $w$ is determined by 
\[
w=\int_z^\pi \divH{v} \,d\zeta,
\]
so there is one derivative loss for the horizontal velocity $v$.
Here, the operator $\div_\H$ denotes the two-dimensional horizontal divergence.
In order to overcome the difficulty, it is necessary to use some anisotropic approaches.
Therefore, we introduce homogeneous-type Besov spaces on $\T$, that is, the toroidal Besov spaces defined by Tsurumi \cite{T2019}.
Then, a scaling-invariant condition appears for the uniqueness to hold.
However, when we treat low-frequency part of a weak solution, another condition appears.
For the two-dimensional case, this condition seems to be improved from Ju \cite{J2021}.
In addition, for the three-dimensional case, it seems to be related by the concept of $z$-weak solutions by a formal consideration (see \autoref{rmk:unique} (5) below).
Our precise main result now reads:

\begin{thm}[weak-strong uniqueness]\label{thm:unique}
Let $n=3$ or $2$, and let 
$v_1$ and $v_2$ be weak solutions to the $n$-dimensional primitive equations (\eqref{eq:PE} for $n=3$ or \eqref{eq:PE2} for $n=2$) with the same initial value $v_1(0)=v_2(0)=v_0\in \mathcal{H}$, where the hydrostatic solenoidal subspace $\mathcal{H}$ is given by
\[
	{  \mathcal{H}  }
	\equiv \begin{dcases}
		\left\{
		v\in L^2(\T^3) \mid 
		\text{ $v$ is even in $z$ and  }\ 
		\div_\H\left( \int_{-\pi}^{\pi} v(x_\H, z) \, dz \right) = 0
		\right\} & (n=3), \\
		{  
		\left\{
		v\in L^2(\T^2) \mid 
		\text{ $v$ is even in $z$ and}\ 
		\p_{x_1}\left( \int_{-\pi}^{\pi} v(x_1, z) \, dz \right) = 0
		\right\}  } & { (n=2).  }
	\end{dcases}
\]
Furthermore, suppose that there exists $T=T_{v_1}>0$ such that
\begin{equation}\label{eq:asp}
	v_1 \in L^\beta(0, T; \dBrpqz{-1/p}{p}{\infty}\LH{\infty} )
		\cap L^{\gamma}(0, T; \dBrpqz{2/\gamma}{2}{2}\LH{\infty}),
\end{equation}
where
\begin{equation}\label{eq:scaling invariant}
	\frac{2}{\beta} + \frac{2}{p} = 1 \qquad ( 2 < p,\, \beta < \infty)
\end{equation}
and for some $2<\gamma<\infty$.
 
We also suppose that $v_1$ and $v_2$ satisfy the energy inequality
\begin{equation}\label{eq:energy inequality vj}	
	\frac{1}{2}\| v_j(t) \|_{L^2(\T^n)}^2 + \int_0^t \| \nabla v_j (\tau) \|_{L^2(\T^n)}^2 \,d\tau
	\leq \frac{1}{2}\| v_0 \|_{L^2(\T^n)}^2 \qquad (j=1, 2)
\end{equation}
for a.e.~$t\in (0, \infty)$.
\par
 
Then, $v_1\equiv v_2$ on $[0, T]$  holds  .
\end{thm}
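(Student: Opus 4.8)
The plan is to prove this by a weak--strong energy argument in the spirit of Serrin, carried out in the anisotropic Littlewood--Paley framework (in the vertical variable $z$) that the one-derivative loss $w=\int_z^{\pi}\divH v\,\d\zeta$ makes unavoidable. Set $V:=v_1-v_2$, $W:=w_1-w_2$, $U:={}^\top(V,W)$, $Q:=\Pi_1-\Pi_2$; then $\div U=0$, $V(0)=0$, and $V$ solves $\dt V+(U\cdot\nabla)v_1+(u_2\cdot\nabla)V-\Delta V+\dH Q=0$ with $\dz Q=0$ and $\divH\bigl(\int_{-\pi}^{\pi}V\,\d z\bigr)=0$. First I would test the weak formulation of $v_j$ against $v_k$ ($j\neq k$) --- admissible because $v_1$, besides being a weak solution (so $v_1,v_2\in L^\infty(0,T;L^2)\cap L^2(0,T;H^1)$), enjoys the extra integrability \eqref{eq:asp}, which makes it a legitimate test function and makes $\langle(u_2\cdot\nabla)v_2,v_1\rangle=-\langle(u_2\cdot\nabla)v_1,v_2\rangle$ well defined --- and combine the resulting cross energy identity with the two hypothesised inequalities \eqref{eq:energy inequality vj} together with the cancellations $\langle(u_2\cdot\nabla)V,V\rangle=\langle(U\cdot\nabla)V,V\rangle=0$ and $\langle\dH Q,V\rangle=0$. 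This reduces the theorem to
\[
\tfrac12\|V(t)\|_{L^2(\Omega)}^2+\int_0^t\|\nabla V(\tau)\|_{L^2(\Omega)}^2\,\d\tau\;\le\;\int_0^t\langle(U\cdot\nabla)V,\,v_1\rangle_{L^2(\Omega)}\,\d\tau .
\]

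Everything then rests on the trilinear term, which is the heart of the proof. Writing $U\cdot\nabla=V\cdot\nablaH+W\dz$, I would split $\langle(U\cdot\nabla)V,v_1\rangle=J_1+J_2$ with $J_1=\int_\Omega(V\cdot\nablaH)V\cdot v_1$ and $J_2=\int_\Omega W\,\dz V\cdot v_1$. For $J_2$, the bound $\|W(\cdot,z)\|_{\LH{2}}\le C\|\nablaH V\|_{L^2(\Omega)}$ (uniform in $z$, coming from $W(\cdot,z)=\int_z^{\pi}\divH V\,\d\zeta$) absorbs the derivative loss at the cost of a horizontal gradient; pairing $v_1\in\LH{\infty}$ horizontally, using the $z$-duality between $\dBrpqz{2/\gamma}{2}{2}\LH{\infty}$ and $\dz V\in\dot H^{2/\gamma-1}_z\LH{2}$, the interpolation $\|V\|_{\dot H^{1-2/\gamma}_z\LH{2}}\le\|V\|_{L^2(\Omega)}^{2/\gamma}\|\dz V\|_{L^2(\Omega)}^{1-2/\gamma}$, and Young's inequality then gives $|J_2|\le\varepsilon\|\nabla V\|_{L^2(\Omega)}^2+C_\varepsilon\|v_1(\tau)\|_{\dBrpqz{2/\gamma}{2}{2}\LH{\infty}}^{\gamma}\|V(\tau)\|_{L^2(\Omega)}^2$. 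For $J_1$ I would Littlewood--Paley decompose $v_1$ in $z$ and split at a frequency $N=N(\tau)$: the low-frequency part of $v_1$ is handled by Bernstein ($\|P^z_{\le N}v_1\|_{L^\infty_z\LH{\infty}}\lesssim N^{2/p}\|v_1\|_{\dBrpqz{-1/p}{p}{\infty}\LH{\infty}}$) plus plain anisotropic Hölder, the high-frequency interactions are handled by exploiting the vertical smoothness of the factors $V$ and $\dz V$ (each $V\in\dot H^1_z\LH{2}$), which supplies the geometric decay in the dyadic sums that the endpoint summability index $\infty$ of $\dBrpqz{-1/p}{p}{\infty}$ cannot, and the leftover high--high interactions, whose output has low vertical frequency, I would absorb using the supplementary vertical regularity $v_1\in\dBrpqz{2/\gamma}{2}{2}\LH{\infty}$. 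That the vertical variable is a torus --- frequencies bounded below --- is used throughout to make these sums converge. Optimising $N(\tau)$ and using the scaling relation \eqref{eq:scaling invariant}, which is exactly what forces the exponent $\beta$ so that the time weight is integrable, should yield
\[
|J_1|\;\le\;\varepsilon\|\nabla V\|_{L^2(\Omega)}^2+C_\varepsilon\bigl(\|v_1(\tau)\|_{\dBrpqz{-1/p}{p}{\infty}\LH{\infty}}^{\beta}+\|v_1(\tau)\|_{\dBrpqz{2/\gamma}{2}{2}\LH{\infty}}^{\gamma}\bigr)\|V(\tau)\|_{L^2(\Omega)}^2
\]
up to lower-order slowly varying factors from the choice of $N$, harmless to time integrability since $\beta,\gamma<\infty$.

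Combining the two bounds with $\varepsilon=\tfrac14$ and absorbing gradients into the left-hand side leaves $\tfrac12\|V(t)\|_{L^2(\Omega)}^2+\tfrac12\int_0^t\|\nabla V(\tau)\|_{L^2(\Omega)}^2\,\d\tau\le\int_0^t g(\tau)\|V(\tau)\|_{L^2(\Omega)}^2\,\d\tau$ with $g(\tau)=C\bigl(\|v_1(\tau)\|_{\dBrpqz{-1/p}{p}{\infty}\LH{\infty}}^{\beta}+\|v_1(\tau)\|_{\dBrpqz{2/\gamma}{2}{2}\LH{\infty}}^{\gamma}\bigr)\in L^1(0,T)$ by \eqref{eq:asp}; the point is that, since $\beta,\gamma<\infty$, one needs only integrability of $g$, not smallness. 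Since $V(0)=0$, Grönwall's inequality forces $V\equiv0$ on $[0,T]$, i.e.\ $v_1\equiv v_2$. The energy equality asserted in the abstract then comes for free: \eqref{eq:asp} is exactly the regularity that allows $v_1$ to be used as a test function in its own weak formulation (the nonlinear and pressure terms dropping out by $\div u_1=0$, $\dz\Pi_1=0$), which upgrades the energy inequality for $v_1$ to an equality.

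The hard part is the trilinear estimate for $J_1$: the scaling-critical space $\dBrpqz{-1/p}{p}{\infty}\LH{\infty}$ has $\ell^\infty$ vertical summability and so leaves no slack, and the contributions where this bites --- the high--high vertical interactions and the ones routed through the vertical velocity $W$ --- must be salvaged by the subcritical vertical regularity $\dBrpqz{2/\gamma}{2}{2}\LH{\infty}$; getting the dyadic bookkeeping and the interpolation to fit the scaling relation \eqref{eq:scaling invariant} so that the time weight lands exactly in $L^1$ is the delicate point. A secondary but genuine point is the admissibility of $v_1$ as a test function against the merely-weak solution $v_2$ in the first step, which again relies on \eqref{eq:asp}.
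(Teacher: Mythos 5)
Your overall strategy --- an $L^2$ energy estimate for $v_d=v_1-v_2$, closed by a vertical Littlewood--Paley/Bony decomposition of the trilinear term, with the critical norm $\dBrpqz{-1/p}{p}{\infty}\LH{\infty}$ handling most paraproduct pieces, the subcritical norm $\dBrpqz{2/\gamma}{2}{2}\LH{\infty}$ handling the remainder, and Gr\"onwall at the end --- is the paper's strategy. But two steps differ genuinely, and in both your version has a gap. First, you obtain the energy relation for $v_d$ by testing the weak formulations of $v_1$ and $v_2$ against each other and combining with \eqref{eq:energy inequality vj}. The admissibility of $v_1$ as a test function against the merely-weak $v_2$, and the very meaning of $\langle (u_2\cdot\nabla)v_2,v_1\rangle$ (which contains $w_2\,\dz v_2\cdot v_1$ with $w_2$ carrying a horizontal derivative of $v_2$), are not settled by \eqref{eq:asp}; you flag this but leave it open. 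The paper avoids it entirely: it invokes Li--Titi's theorem that any weak solution satisfying the energy inequality is smooth for $t>0$, obtains the identity \eqref{eq:1st form} on $[s,t]$ with $s>0$ by direct computation, and then sends $s\to0$ using the right-continuity of $v_d$ in $L^2$ at $t=0$ (\autoref{lem:prop weak sol}\,(3)). This is precisely why \eqref{eq:energy inequality vj} is imposed on \emph{both} solutions. If you keep the cross-testing route you must supply a mollification argument here; as written the step is missing.

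Second, your plan to close the vertical advection term $\int w_d\,\dz v_d\cdot v_1$ using only the $\gamma$-norm does not work. The chain ``pull out $\|w_d\|_{\LH{2}}\lesssim\|\nablaH v_d\|_{L^2}$ uniformly in $z$, then pair $\dz v_d$ with $v_1$ by vertical duality'' applies H\"older in $x_\H$ \emph{before} the vertical duality, and a negative-order Besov norm of the scalar function $z\mapsto\|\dz v_d(\cdot,z)\|_{\LH{2}}$ is not controlled by $\|\dz v_d\|_{\dBrpqz{-2/\gamma}{2}{2}\LH{2}}$; the paraproduct decomposition has to come first. More substantively, the Bony pieces in which $w_d$ sits at high vertical frequency cannot be absorbed by the positive-regularity norm $\dBrpqz{2/\gamma}{2}{2}$: Bernstein on the low-frequency block of $v_1$ yields at best a factor $2^{(1/2-2/\gamma)m}$, which is short by $2^{m/2}$ of the factor $2^{(1-2/\gamma)m}$ needed to balance the derivative loss in $w_d$ against the interpolation $\|\vp_j\ast v_d\|_{L^2}^{2/\gamma}\|\vp_j\ast\dz v_d\|_{L^2}^{1-2/\gamma}$, so Young's inequality cannot land on $\|v_d\|_{L^2}^2\|v_1\|^{\gamma}$. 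The paper accordingly estimates two of the three pieces of the vertical term ($J_4$, $J_6$) with the \emph{critical} norm $\dBrpqz{-1/p}{p}{\infty}\LH{\infty}$, exactly as for the horizontal term, and uses the $\gamma$-norm only for the piece where $w_d$ is at low frequency ($J_5$); symmetrically, in the horizontal term it is the piece with $v_d$ at low frequency ($J_2$) --- not the high--high piece, as you propose --- that requires the $\gamma$-norm. These are repairable, but as stated your estimate fails. (Your closing claim that the energy equality then ``comes for free'' is also not how the paper proceeds; that is Theorem 1.2, proved by a separate mollification/commutator argument.)
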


\begin{rem}\label{rmk:unique}
\begin{enumerate}[(1)]
\item The assumption $v_1\in X_T^{\beta, p}\equiv L^\beta(0, T; \dBrpqz{-1/p}{p}{\infty}\LH{\infty} )$ means that $v_1$ belongs to scaling invariant spaces by means of 
	\[
		\| (v_1)_\lambda \|_{X_{T, \lambda}^{\beta, p}}
		= \| v_1 \|_{X_T^{\beta, p}}
		\qquad \text{for $\lambda>0$},
	\]
	where $(v_1)_\lambda(x_1, x_2, z, t) = \lambda v(\lambda x_1, \lambda x_2, \lambda z, \lambda^2 t)$ and $X_{T, \lambda}^{\beta, p, q} = L^\beta(0, T/\lambda^2 ; \dBrpqz{-1/p}{p}{q}(\T_\lambda ; \LH{\infty}({  \T_\lambda^{n-1}  }) ) )$. Indeed, the condition \eqref{eq:scaling invariant} can be immediately rewritten as 
	\[
		\frac{2}{\beta} + \frac{1}{p} + \frac{n-1}{\infty} = 1 + \left( -\frac{1}{p} \right).
	\]

\item We can see from the proof that the uniqueness is valid even if the assumption \eqref{eq:asp} is replaced by
	\[
	v_1 \in L^\beta(0, T; \LH{\infty}\dBrpqz{-1/p}{p}{\infty} )
		\cap L^{\gamma}(0, T; \LH{\infty}\dBrpqz{2/\gamma}{2}{2}).
	\]
	
\item (comparison with the $z$-weak solution framework in the 3D case)
	As we stated, letting $\gamma\to 2+0$, then $L^{\gamma}(0, T; \dBrpqz{2/\gamma}{2}{2}\LH{\infty})$ tends to $L^2(0, T; \dot{H}^1_z\LH{\infty})$.
	For the three dimensional case, if we formally see $\LH{\infty}(\T^2)$ as $\dot{H}^1_\H (\T^2)$ \footnote{Strictly saying, the embedding $\dot{H}^1_\H\hookrightarrow L^\infty_\H$ does NOT hold due to the marginal case of the Sobolev embedding in two dimension, but we see these spaces as similar space with respect to scaling or embedding here.}, the condition 
	\[
	v_1\in L^2(0, T; \dot{H}^1_z\LH{\infty})
	\]
	can be seen as 
	\[
	\dz \dH v_1 \in L^2(0, T; L^2),
	\]
	which corresponds to the definition of $z$-weak solutions.

\item (comparison with Ju \cite{J2021} for the 2D case)
	Letting $\gamma\to 2+0$, then $L^{\gamma}(0, T; \dBrpqz{2/\gamma}{2}{2}\LH{\infty})$ tends to $L^2(0, T; \dot{H}^1_z\LH{\infty})$. This corresponds to the condition
	\[
	\dz v_1 \in L^2(0, T; L^2_z\LH{\infty}),
	\]
	which is one of that derived by Ju \cite{J2021}.
\end{enumerate}
\end{rem}
 
Previous results by \cite{J2021, LT2017} showed the energy equality for weak solutions \emph{satisfying the energy inequality}, whereas we prove the energy equality for weak solutions in slightly larger class than that given in~\eqref{eq:asp} without assuming the energy inequality.

\begin{thm}[energy equality]\label{thm:energy eq}
If a weak solution $v$ with $v(0)=v_0\in   \mathcal{H}  $ to \eqref{eq:PE} satisfies 
\begin{equation}\label{eq:asp EE}
	v \in L^\beta( 0, T; \dBrpqz{-1/p}{p}{\infty}\LH{\infty} )
	\cap L^{\gamma}( 0, T; \dBrpqz{2/\gamma}{2}{\infty}\LH{\infty} )
\end{equation}
for some $T=T_v>0$, where
\[
	\frac{2}{\beta} + \frac{2}{p} = 1 \qquad ( 2 < p,\, \beta < \infty),
\]
and for some $2<\gamma<\infty$. \par
Then, the weak solution $v$ satisfies the following energy equality: 
\begin{equation}\label{eq:energy equality}
	\frac{1}{2}\| v(t) \|_{L^2}^2 
	+ \int_0^t \| \nabla v (\tau) \|_{L^2}^2 \,d\tau 
	= \frac{1}{2}\| v_0 \|_{L^2}^2
\end{equation}
for $0<t\leq T$.
\end{thm}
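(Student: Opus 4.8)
The plan is to obtain \eqref{eq:energy equality} directly from the weak formulation of \eqref{eq:PE} by a Constantin--E--Titi-type commutator argument, in the spirit of the Serrin/Lions/Shinbrot treatment of the energy equality for Navier--Stokes, with the extra integrability \eqref{eq:asp EE} entering precisely to make the commutator error vanish; the energy inequality is never used. Fix an even spatial approximate identity $(J_\eps)_{\eps>0}$ on $\T^3$, compatible with the toroidal Littlewood--Paley decomposition in $z$, and set $v_\eps:=J_\eps v$, $u_\eps:=J_\eps u$, $\Pi_\eps:=J_\eps\Pi$. Applying $J_\eps$ to \eqref{eq:PE} gives, in $\mathcal D'((0,T)\times\T^3)$,
\begin{equation}\label{eq:pf-moll}
\dt v_\eps + J_\eps\bigl[(u\cdot\nabla)v\bigr] - \Delta v_\eps + \dH\Pi_\eps = 0,\qquad v_\eps(0)=J_\eps v_0 .
\end{equation}
Since $v\in\rL^\infty(0,T;\rL^2(\T^3))\cap\rL^2(0,T;\rH^1(\T^3))$ (part of the definition of weak solution) and $u\otimes v\in\rL^1((0,T)\times\T^3)$, the right-hand side of \eqref{eq:pf-moll} lies in $\rL^1(0,T;\rL^2(\T^3))$, so $t\mapsto\tfrac12\|v_\eps(t)\|_{\rL^2}^2$ is absolutely continuous; pairing \eqref{eq:pf-moll} with $v_\eps$ and integrating on $(0,t)$ yields
\begin{equation}\label{eq:pf-eps}
\tfrac12\|v_\eps(t)\|_{\rL^2}^2 + \int_0^t\|\nabla v_\eps\|_{\rL^2}^2\,d\tau
= \tfrac12\|J_\eps v_0\|_{\rL^2}^2 - \int_0^t\scalarprodLtwo{J_\eps[(u\cdot\nabla)v]}{v_\eps}{\T^3}\,d\tau .
\end{equation}
The pressure term vanishes: $\Pi$ does not depend on $z$ and the vertical mean $\int_{-\pi}^{\pi}v\,dz$ is horizontally divergence-free for a.e.\ $t$ (propagated from $v_0\in\mathcal H$), and $J_\eps$ preserves both facts, so $\scalarprodLtwo{\dH\Pi_\eps}{v_\eps}{\T^3}=-\scalarprodLtwo{\Pi_\eps}{\divH v_\eps}{\T^3}=0$.

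Using incompressibility $\div u=0$, write $(u\cdot\nabla)v=\div(u\otimes v)$; then $\div u_\eps=J_\eps(\div u)=0$, and since $v_\eps$ is smooth, $\scalarprodLtwo{(u_\eps\cdot\nabla)v_\eps}{v_\eps}{\T^3}=-\tfrac12\scalarprodLtwo{(\div u_\eps)\,v_\eps}{v_\eps}{\T^3}=0$. As $J_\eps$ is self-adjoint and commutes with derivatives, integration by parts gives
\[
\scalarprodLtwo{J_\eps[(u\cdot\nabla)v]}{v_\eps}{\T^3}
= -\scalarprodLtwo{r_\eps(u,v)}{\nabla v_\eps}{\T^3},
\qquad
r_\eps(u,v):=J_\eps(u\otimes v)-u_\eps\otimes v_\eps ,
\]
the Constantin--E--Titi commutator. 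Hence \eqref{eq:energy equality} follows once
\begin{equation}\label{eq:pf-comm}
\int_0^t\bigl|\scalarprodLtwo{r_\eps(u,v)}{\nabla v_\eps}{\T^3}\bigr|\,d\tau \longrightarrow 0\qquad(\eps\to0).
\end{equation}

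To prove \eqref{eq:pf-comm} I would reuse the anisotropic nonlinear analysis from the proof of \autoref{thm:unique}, reading \eqref{eq:asp EE} through the toroidal Littlewood--Paley decomposition in $z$: decompose $u\otimes v$ by a Bony paraproduct in $z$, bound the high--low/transport part of $r_\eps(u,v)$ via the negative-order norm $\dBrpqz{-1/p}{p}{\infty}\LH{\infty}$ of $v$ --- and, in the factor $w=\int_z^\pi\divH v\,d\zeta$, of $\divH v$, the horizontal derivative being harmless in the $\LH{\infty}$ scale --- which produces a time weight with exponent $\beta$, integrable exactly because $\tfrac{2}{\beta}+\tfrac{2}{p}=1$; and bound the remaining low-frequency/resonant part via $\dBrpqz{2/\gamma}{2}{\infty}\LH{\infty}$, where the strict positivity $2/\gamma>0$ of the vertical smoothness yields a genuine gain as $\eps\to0$. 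Here the weak $(2,\infty)$ summability of the second space already suffices, because one only needs the $\eps\to0$ limit to vanish rather than the quantitative difference estimate between two solutions required in \autoref{thm:unique}; a single supremum over dyadic blocks together with the net decay in $\eps$ closes the bound. This is why \eqref{eq:asp EE} allows a slightly larger class than \eqref{eq:asp}.

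Finally, I let $\eps\to0$ in \eqref{eq:pf-eps}: $\|J_\eps v_0\|_{\rL^2}\to\|v_0\|_{\rL^2}$; $\int_0^t\|\nabla v_\eps\|_{\rL^2}^2\,d\tau\to\int_0^t\|\nabla v\|_{\rL^2}^2\,d\tau$ because $\nabla v_\eps=J_\eps\nabla v\to\nabla v$ strongly in $\rL^2((0,t)\times\T^3)$ (using $\nabla v\in\rL^2((0,T)\times\T^3)$); and $\|v_\eps(t)\|_{\rL^2}\to\|v(t)\|_{\rL^2}$ for every $t\in(0,T]$, since a weak solution is weakly continuous in time with values in $\rL^2(\T^3)$ (so $v(t)\in\rL^2(\T^3)$ and $J_\eps v(t)\to v(t)$ strongly). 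Together with \eqref{eq:pf-comm} this establishes \eqref{eq:energy equality} for all $0<t\le T$. The main obstacle is \eqref{eq:pf-comm}: the anisotropic Constantin--E--Titi commutator estimate must accommodate the derivative loss in $w$ and split off the resonant part handled by the second hypothesis --- which is precisely the nonlinear heart shared with \autoref{thm:unique}, whence the phrase ``as an application of the proof''.
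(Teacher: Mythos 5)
Your overall strategy coincides with the paper's: mollify anisotropically, test the weak formulation with the doubly mollified solution, reduce the energy flux to a Constantin--E--Titi-type commutator paired with $\dH v_{\leq N}$ and $\dz v_{\leq N}$, and use the two hypotheses in \eqref{eq:asp EE} to make that flux vanish, never invoking the energy inequality. Your identity for the mollified energy balance, the cancellation of the pressure term, and the passage to the limit in the linear terms all match \eqref{eq:energy eq error} and the concluding argument of the paper.

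The gap is in the convergence of the commutator term, which is the entire content of the theorem and which you propose to obtain by ``reusing the anisotropic nonlinear analysis from the proof of \autoref{thm:unique}'' via a Bony paraproduct in $z$. That reuse cannot work as stated, because the hypotheses are \emph{exactly} scaling-critical: in the paper's notation the loss $\| \dH v_{\leq N} \|_{L^\infty} \lesssim 2^{(1+2/p)N}\| v \|_{\dBrpqz{-1/p}{p}{\infty}\LH{\infty}}$ (\autoref{lem:3-1}) is precisely cancelled by the gain $\| I_m \|_{L^1}\lesssim 2^{-(1+2/p)N}(\cdots)$ (\autoref{lem:3-2}), and the H\"older exponents in time saturate $\tfrac{1+2/p}{2}+\tfrac1\beta=1$; the same exact cancellation of powers of $2^N$ occurs for the $\gamma$-part. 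Hence the paraproduct bounds of the uniqueness proof, applied with $v_1=v_2=v$, only show that the flux is \emph{bounded} uniformly in the mollification parameter, not that it tends to zero --- and your appeal to ``net decay in $\eps$'' and to ``a genuine gain from $2/\gamma>0$'' identifies no actual source of smallness, since no leftover negative power of $2^N$ survives. The paper's mechanism is different: it writes the commutator in the Constantin--E--Titi double-difference form $I_1,\dots,I_8$ (separately in $z$ and in $x_\H$, which is why the mollifier must be the tensor product $\psi_N\ast_z\chi_N\ast_\H$), so that every term carries an extra factor of the type $\| v(\cdot,\cdot-2^{-N}\zeta)-v \|_{L^2}^{1-2/p}$ or $\| \psi_N\ast_z v - v \|_{L^2}^{1-2/p}$; these tend to zero by strong $L^2$-continuity of translations, and dominated convergence in time then kills the critical flux (\autoref{lem:3-3}, \autoref{lem:3-7}). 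To close your argument you must replace the paraproduct step by this difference representation (or otherwise extract a vanishing translation-modulus factor); without it the proof does not go through.
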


\begin{rem}
We can see from the proof that the energy equality \eqref{eq:energy equality} is valid even if the assumption \eqref{eq:asp EE} is replaced by
\[
	v \in L^\beta( 0, T; \LH{\infty}\dBrpqz{-1/p}{p}{\infty} )
	\cap L^{\gamma}( 0, T; \LH{\infty}\dBrpqz{2/\gamma}{2}{\infty} ).
\]
\end{rem}

{The proof of the energy equality \eqref{eq:energy equality} is an application of the technique essentially used in the proof of the uniqueness (\autoref{thm:unique}).  } We will prove \autoref{thm:energy eq} in \autoref{sec:proof of EE}.\par

Combining \autoref{thm:unique} and \autoref{thm:energy eq} we obtain uniqueness of strong solutions. More precisely, we obtain
\begin{cor}[strong uniqueness]\label{cor:strong uniqueness}
Let $v_1,v_2 \in C_w([0, \infty); \mathcal{H}) \cap L^2_\loc([0, \infty); (H^1\cap \mathcal{H}))$ with the same initial value $v_1(0)=v_2(0)=v_0\in \mathcal{H}$ be weak solutions to \eqref{eq:PE} or \eqref{eq:PE2}.
Suppose that there exists $T=T_{v_1,v_2}>0$ such that
\begin{equation}\label{eq:asp2}
	v_1, v_2 \in L^\beta(0, T; \dBrpqz{-1/p}{p}{\infty}\LH{\infty} )
		\cap L^{\gamma}(0, T; \dBrpqz{2/\gamma}{2}{2}\LH{\infty}),
\end{equation}
where \eqref{eq:scaling invariant} and for some $2<\gamma<\infty$. \par
Then, $v_1\equiv v_2$ on $[0, T]$ holds.
\end{cor}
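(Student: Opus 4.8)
The plan is to reduce \autoref{cor:strong uniqueness} to the two main theorems by a short bootstrap, the essential point being that hypothesis \eqref{eq:asp2} is imposed on \emph{both} $v_1$ and $v_2$, so that the energy inequality required in \autoref{thm:unique} need not be assumed but can be \emph{derived}.

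First I would observe that \eqref{eq:asp2} implies, for each $j\in\{1,2\}$, the slightly weaker condition \eqref{eq:asp EE}. Indeed, for fixed smoothness and integrability indices the homogeneous toroidal Besov spaces are monotone in the fine index, so $\dBrpqz{2/\gamma}{2}{2}\hookrightarrow\dBrpqz{2/\gamma}{2}{\infty}$, whence
\[
 v_j \in L^\beta(0,T;\dBrpqz{-1/p}{p}{\infty}\LH{\infty}) \cap L^\gamma(0,T;\dBrpqz{2/\gamma}{2}{\infty}\LH{\infty}),
\]
with the exponent relation \eqref{eq:scaling invariant} still in force and the same $\gamma\in(2,\infty)$. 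Applying \autoref{thm:energy eq} to each $v_j$ (for the two-dimensional system \eqref{eq:PE2} one uses the verbatim analogue of the argument in \autoref{sec:proof of EE}, which relies only on the horizontal divergence-free structure and the anisotropic Besov estimates, both of which have evident $2$D counterparts) then yields that $v_1$ and $v_2$ each satisfy the energy equality \eqref{eq:energy equality} on $(0,T]$, and in particular the energy inequality \eqref{eq:energy inequality vj} there.

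Next I would invoke \autoref{thm:unique}: the initial data agree and lie in $\mathcal{H}$ by assumption; the solution $v_1$ satisfies \eqref{eq:asp}, which is precisely the first displayed line of \eqref{eq:asp2}; and $v_1,v_2$ both satisfy \eqref{eq:energy inequality vj} by the previous step, at least on the interval $[0,T]$ on which the comparison argument in the proof of \autoref{thm:unique} is carried out. Hence \autoref{thm:unique} applies and gives $v_1\equiv v_2$ on $[0,T]$, which is the assertion.

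There is no genuine obstacle here: the corollary is a formal combination of \autoref{thm:unique} and \autoref{thm:energy eq}, and the only points to verify are the elementary embedding $\ell^2\hookrightarrow\ell^\infty$ used in the first step and the fact --- already contained in \autoref{thm:energy eq} --- that membership in the scaling-invariant class alone, without any a~priori energy inequality, forces the energy balance. All the analytic substance lies in the two main theorems, so the write-up of this corollary should remain very short.
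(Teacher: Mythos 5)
Your proposal is correct and follows exactly the route the paper intends: the paper states the corollary as an immediate combination of \autoref{thm:unique} and \autoref{thm:energy eq}, and your bootstrap (embedding $\dBrpqz{2/\gamma}{2}{2}\hookrightarrow\dBrpqz{2/\gamma}{2}{\infty}$ to place both $v_j$ in the class of \autoref{thm:energy eq}, deducing the energy (in)equality \emph{a posteriori}, then invoking \autoref{thm:unique}) is precisely that combination with the details made explicit. Your remark that the energy inequality is only obtained on $(0,T]$ rather than a.e.\ on $(0,\infty)$ is the right point to flag, and it is harmless since the comparison argument and the right-continuity at $t=0$ in \autoref{lem:prop weak sol} only use the inequality on that interval.
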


\section{Preliminaries}
\label{sec:prelim}

\subsection{Definition and important properties of the toroidal Besov spaces}
\label{subsec:toroidal Besov}

{ 
In this \autoref{subsec:toroidal Besov}, we mainly refer \cite{T2019}, and we also refer Schmeisser--Triebel \cite{ST1987}, Triebel \cite{T1983} and Xiong--Xu--Yin \cite{XXY2018}.
 }
In the following, $\T_\lambda=\R/(2\pi\lambda\Z)$ denotes the one-dimensional torus whose period is $2\pi\lambda$ for $\lambda>0$.\par
We define $\sD(\T_\lambda)$ and its subspace $\sD_0(\T_\lambda)$ as 
\begin{align*}
	& \sD(\T_\lambda)
	= \left\{
		\phi\in C^\infty (\R) 
		\mid \text{ $ \phi(z)=\phi(z') $ for $z-z'\in 2\pi\lambda\Z$ }
		\right\}, \\
	& \sD_0(\T_\lambda)
	= \left\{
		\phi\in \sD(\T_\lambda)
		\mid \int_{[-\pi\lambda, \pi\lambda]} \phi(z) \, dz = 0
		\right\}.
\end{align*}
Here the locally convex topology in $\sD(\T_\lambda)$ is generated by the semi-norms 
\[
	[ \phi ]_{l} = \sup_{z\in \T_\lambda} | \partial_z^{l}\phi (z) |
	\qquad \text{for $l\in\N\cup\{0\}$},
\]
and we equip $\sD_0(\T_\lambda)$ with the relative topology of $\sD(\T_\lambda)$.
We define $\sD^\prime (\T_\lambda)$ (resp.~$\sD_0^\prime (\T_\lambda)$) as the topological dual space of $\sD(\T_\lambda)$ (resp.~$\sD_0(\T_\lambda)$), respectively. \par
Any $\phi\in\sD(\T_\lambda)$ has the Fourier series representation
\[
	\phi(z) = \sum_{k\in\Z} \,
		[\sF_{\T_\lambda} \phi](k) e^{i k\cdot (\lambda^{-1} z)}
	\qquad \text{in $\sD(\T_\lambda)$},
\]
where $\sF_{\T_\lambda}\phi$ is defined by
\[
	[\sF_{\T_\lambda} \phi](k)
	\equiv \frac{1}{2\pi\lambda}
		\int_{[-\pi\lambda, \pi\lambda]} \phi(z) e^{-ik\cdot (\lambda^{-1} z)} \, dz
	\qquad \text{for $k\in\Z$}.
\]
We can easily see that the Fourier coefficients $[\sF_{\T_\lambda} \phi] (k)$ are rapidly decreasing as $|k|\to\infty$, namely, $\sF_{\T_\lambda} \phi \in \sS_\lambda(\Z)$, where
{\small
\[
	\sS_\lambda(\Z)
	\equiv \left\{ \{a_k\}_{k\in\Z}\subset \C
		\mid \text{for any $m\in\N\cup\{0\}$, there exists $C_m>0$ s.t.~
		$\disp |a_k| \leq \frac{C_m}{(1+|k/\lambda|)^m} $ }\right\}.
\]
}
We equip $\sS_\lambda(\Z)$ with the locally convex topology generated by the norms
\[
	\| \{a_k\}_{k\in\Z} \|_{\ell^\infty_{m, \lambda}}
	\equiv \sup_{k\in\Z} \left( (1+|k/\lambda|)^m|a_k| \right),
	\qquad \text{for $m\in\N\cup\{0\}$}.
\]
We also define $\sF_{\T_{\lambda}}^{-1}: \sS_\lambda(\Z)\to \sD(\T_\lambda)$ as
\[
	[\sF^{-1}_{\T_\lambda}g](z)
	\equiv \sum_{k\in\Z} g(k) e^{ik\cdot(\lambda^{-1}z)}.
\]
Here we also define $\sF_{\T_\lambda}: \sD^\prime(\T_\lambda)\to \sS^\prime_\lambda(\Z)$ and $\sF_{\T_{\lambda}}^{-1}: \sS^\prime_\lambda(\Z)\to \sD^\prime(\T_\lambda)$ as 
\[
	\langle{ \sF_{\T_{\lambda}}f , \alpha }\rangle_{\sS_\lambda^\prime(\Z)\times \sS_\lambda(\Z)}
	\equiv \langle{ f, \sF^{-1}_{\T_\lambda}[\alpha(-\cdot )] 
		}\rangle_{\sD^\prime(\T_\lambda)\times\sD(\T_\lambda)}
	\qquad \text{for any $\alpha\in\sS_\lambda(\Z)$},
\]
and
\[
	\langle{ \sF^{-1}_{\T_{\lambda}}g , \phi }\rangle_{\sD^\prime(\T_\lambda)\times\sD(\T_\lambda)}
	\equiv \langle{ g, \sF_{\T_\lambda}[\phi(-\cdot)]
		}\rangle_{\sS_\lambda^\prime(\Z)\times \sS_\lambda(\Z)}
	\qquad \text{for any $\phi\in\sD(\T_\lambda)$},
\]
respectively, where $\sS^\prime_\lambda(\Z)$ is the topological dual space of $\sS_\lambda(\Z)$.\par
We also define the convolution of $\phi, \psi\in \sD(\T_\lambda)$ (or $\phi, \psi\in \sD_0(\T_\lambda)$) as
\[
	(\phi\ast \psi)(z) = \int_{[-\pi\lambda, \pi\lambda]} \phi(z-\zeta) \psi(\zeta)\,d\zeta.
\]
Using this definition, we define the convolution of $\phi\in \sD(\T_\lambda)$ and $f\in \sD^\prime(\T_\lambda)$ (resp.~$\phi\in \sD_0(\T_\lambda)$ and $f\in \sD_0^\prime(\T_\lambda)$) as
\[
	\langle{ \phi \ast f, \psi }\rangle
	\equiv \langle{ f, \phi(-\cdot)\ast \psi }\rangle
	= \left\langle{ f, \int_{[-\pi\lambda, \pi\lambda]} \phi(y-\cdot)\psi(y)\,dy }\right\rangle 
\]
for any $\psi\in\sD(\T_\lambda)$ (resp.~$\psi\in\sD_0(\T_\lambda)$).
\par
Next, we recall some properties of Littlewood--Paley decomposition. We first take a smooth and radial symmetry function $\chi\in C_0^\infty(\R)$ satisfying
\[
\chi(\xi) = \chi (|\xi|) =\begin{cases}
1 & (|\xi|\leq1/2), \\
0 & (|\xi|\geq1).
\end{cases}
\]
We define the function $\rho$ on $\R$ as $\rho(\xi) \equiv \chi(\xi/2)-\chi(\xi)$. Then, it is easy to see that $\{\rho_{\lambda, j}\}_{j\in\Z}$ defined as
\begin{equation}\label{eq: rhoj}
	\rho_{\lambda, j}(\xi) \equiv \rho(2^{-j}\lambda^{-1}\xi)
\end{equation}
satisfies the following properties.

\begin{prop}\label{prop: LP}
Let $\lambda>0$, and let $\{\rho_{\lambda, j}\}_{j\in\Z}$ be the partition of the unity defined as \eqref{eq: rhoj}.
\begin{enumerate}[(1)]
\item $\sum\limits_{j\in\Z}\rho_{\lambda, j}(\xi)=1 \qquad \text{for $\xi\neq0$}$.
\item $\supp{ \rho_{\lambda, j} } 
		\subset \left\{ \xi\in\R \mid 2^{j-1}\leq |\lambda^{-1}\xi|\leq 2^{j+1} \right\}$
\item $\supp{ \rho_{\lambda, j} }\cap \supp{ \rho_{\lambda, j'} }
		=\emptyset \qquad \text{for $|j-j'|\geq2$}.$
\end{enumerate}	
\end{prop}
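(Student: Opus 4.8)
The three claims constitute the standard dyadic partition-of-unity lemma, so my plan is simply to read them off from the defining relation $\rho(\xi)=\chi(\xi/2)-\chi(\xi)$ together with the normalization $\chi(0)=1$ and the support property $\supp{\chi}\subset\{|\xi|\le 1\}$, handling each item separately; the only feature not already present in the classical Euclidean construction (cf.~\cite{T1983,ST1987}) is the innocuous rescaling $\xi\mapsto 2^{-j}\lambda^{-1}\xi$ built into \eqref{eq: rhoj}.

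For (1) I would fix $\xi\neq 0$, abbreviate $\eta=\lambda^{-1}\xi$, and note that $\rho_{\lambda,j}(\xi)=\rho(2^{-j}\eta)=\chi\big(2^{-(j+1)}\eta\big)-\chi\big(2^{-j}\eta\big)$, whence the partial sums telescope:
\[
	\sum_{j=-N}^{M}\rho_{\lambda,j}(\xi)
	=\chi\big(2^{-(M+1)}\eta\big)-\chi\big(2^{N}\eta\big).
\]
Letting $M\to\infty$ and using continuity of $\chi$ at $0$ sends the first term to $\chi(0)=1$; letting $N\to\infty$ and using $\supp{\chi}\subset\{|\zeta|\le 1\}$ together with $\eta\neq 0$ makes the second term vanish for $N$ large. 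Hence the series equals $1$, and by (2) only finitely many of its terms are nonzero, so no convergence subtlety arises. For (2) I would compute $\supp{\rho}$ by a two-line case check: if $|\eta|\le 1/2$ then $|\eta/2|\le 1/2$ as well, so $\chi(\eta/2)=\chi(\eta)=1$ and $\rho(\eta)=0$; if $|\eta|\ge 2$ then $|\eta/2|\ge 1$, so $\chi(\eta/2)=\chi(\eta)=0$ and $\rho(\eta)=0$. Thus $\supp{\rho}\subset\{1/2\le|\eta|\le 2\}$, and substituting $\eta=2^{-j}\lambda^{-1}\xi$ gives $\supp{\rho_{\lambda,j}}\subset\{2^{j-1}\le|\lambda^{-1}\xi|\le 2^{j+1}\}$.

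Item (3) is then immediate: if $j'\ge j+2$ then $2^{j'-1}\ge 2^{j+1}$, so the two annuli from (2) are disjoint, the only tangency occurring in the limiting case $j'=j+2$ along the sphere $\{|\lambda^{-1}\xi|=2^{j+1}\}$, on which $\rho$ itself vanishes. Since every step is a direct computation, I do not expect any genuine obstacle; the one point that deserves a word is precisely this borderline case $|j-j'|=2$ of (3), which one either tolerates via the vanishing-on-the-sphere remark just made, or removes entirely by taking $\chi\equiv 1$ on a ball of radius strictly larger than $1/2$, which strictly separates the dilated supports.
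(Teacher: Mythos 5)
Your argument is correct and is precisely the standard computation the paper has in mind (the paper omits the proof entirely, asserting the properties are ``easy to see''): the telescoping identity $\rho(2^{-j}\eta)=\chi(2^{-(j+1)}\eta)-\chi(2^{-j}\eta)$ for (1), the two-line case check giving $\supp\rho\subset\{1/2\le|\eta|\le 2\}$ for (2), and disjointness of the dyadic annuli for (3). The one substantive point is the borderline case $|j-j'|=2$ that you yourself flag, and there your first proposed remedy does not quite work: $\supp\rho$ is the \emph{closure} of $\{\rho\neq 0\}$, and for a generic cutoff (with $\chi$ strictly positive on $[1/2,1)$) the closed annuli for $j$ and $j+2$ genuinely share the two points with $|\lambda^{-1}\xi|=2^{j+1}$, so the observation that $\rho$ vanishes there does not make the supports disjoint as sets. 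Only your second remedy --- taking $\chi\equiv 1$ on a ball of radius strictly larger than $1/2$, so that the ratio of outer to inner radius of the annulus drops below $4$ --- repairs item (3) exactly as stated. Alternatively, note that what is actually used downstream (in the Bony decomposition of Section 3) is only that $\rho_{\lambda,j}\,\rho_{\lambda,j'}\equiv 0$ for $|j-j'|\ge 2$, and this does follow from your computation without modifying $\chi$, since the \emph{open} sets $\{\rho_{\lambda,j}\neq 0\}$ and $\{\rho_{\lambda,j'}\neq 0\}$ are disjoint.
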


Let $\vp_{\lambda, j} = \sF_{\T_\lambda}^{-1}[\rho_{\lambda, j}|_{\Z}]$, where $\rho_{\lambda, j}|_{\Z}$ denotes the restriction of $\rho_{\lambda, j}$ on $\Z$. From \autoref{prop: LP}, it follows $\vp_{\lambda, j}\in \sD(\T_\lambda)$.

Next we give the definition of the homogeneous toroidal Besov spaces on $\T_\lambda$.

\begin{defn}[homogeneous toroidal Besov spaces, cf.~\cite{T2019}]
Let $s\in \R$, $1\leq p \leq \infty$ and $1\leq q \leq \infty$. We define the homogeneous toroidal Besov space $\dBrpq{s}{p}{q}(\T_\lambda)$ as the set of all distribution $u\in \sD_0^\prime(\T_\lambda)$ satisfying 
\[
	\| u \|_{\dBrpq{s}{p}{q}(\T_\lambda)} = \begin{cases} \vspace{10pt}
	\disp \left( \sum_{j\in\Z} 
		\left| 2^{sj} \| \vp_{\lambda, j} \ast u \|_{L^p(\T_\lambda)} \right|^q \right)^{1/q} < \infty 
		& (1\leq q<\infty), \\
	\disp \sup_{j\in\Z} \left( 2^{sj} \| \vp_{\lambda, j} \ast u \|_{L^p(\T_\lambda)} \right) < \infty & (q=\infty). 
	\end{cases}
\]
\end{defn}

\begin{rem}
For the definition of inhomogeneous Besov spaces on a torus, readers can refer \cite{ST1987, T1983, XXY2018}.
\end{rem}

By the definition of the norm of Besov spaces, it is to see that the following property holds.
\begin{lem}
Let $s\in\R$, $1\leq p\leq \infty$ and $1\leq q_1 \leq q_2 \leq \infty$. Then, the following inclusion relation holds:
\[
	\dBrpq{s}{p}{q_1}(\T_\lambda) \subset \dBrpq{s}{p}{q_2}(\T_\lambda).
\]
\end{lem}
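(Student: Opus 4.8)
The plan is to reduce the statement to the elementary nesting of sequence spaces $\ell^{q_1}(\Z)\subset \ell^{q_2}(\Z)$ for $1\le q_1\le q_2\le\infty$, applied to the sequence of Littlewood--Paley blocks of $u$. Since both $\dBrpq{s}{p}{q_1}(\T_\lambda)$ and $\dBrpq{s}{p}{q_2}(\T_\lambda)$ are by definition subsets of the common ambient space $\sD_0^\prime(\T_\lambda)$, the asserted inclusion is a genuine inclusion of subsets, so it suffices to compare the two norms on an arbitrary element of the smaller space.

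First I would fix $u\in\dBrpq{s}{p}{q_1}(\T_\lambda)$ and set $a_j \equiv 2^{sj}\|\vp_{\lambda,j}\ast u\|_{L^p(\T_\lambda)}$ for $j\in\Z$, so that by the definition of the toroidal Besov norm $\|\{a_j\}_{j\in\Z}\|_{\ell^{q_1}(\Z)} = \|u\|_{\dBrpq{s}{p}{q_1}(\T_\lambda)}<\infty$, with the usual convention that $\ell^\infty(\Z)$ is used when $q_1=\infty$. Then I would invoke the standard inequality $\|\{a_j\}\|_{\ell^{q_2}(\Z)}\le \|\{a_j\}\|_{\ell^{q_1}(\Z)}$: in the generic case $1\le q_1<q_2<\infty$ this follows from $|a_j|\le \|\{a_k\}\|_{\ell^{q_1}}$ for every $j$, hence $|a_j|^{q_2}\le |a_j|^{q_1}\,\|\{a_k\}\|_{\ell^{q_1}}^{q_2-q_1}$, and summing over $j\in\Z$ and taking the $q_2$-th root yields the claim; the endpoint $q_2=\infty$ is immediate from $\sup_j|a_j|\le\|\{a_k\}\|_{\ell^{q_1}}$, and $q_1=q_2$ is trivial.

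Consequently $\{a_j\}_{j\in\Z}\in\ell^{q_2}(\Z)$, which is exactly the statement $u\in\dBrpq{s}{p}{q_2}(\T_\lambda)$ together with the norm bound $\|u\|_{\dBrpq{s}{p}{q_2}(\T_\lambda)}\le\|u\|_{\dBrpq{s}{p}{q_1}(\T_\lambda)}$; in particular the inclusion is also continuous. There is essentially no obstacle here — the only minor care needed is the bookkeeping of the endpoint cases $q_1=\infty$ or $q_2=\infty$ entering through the two cases in the definition of the Besov norm, which is why it is convenient to phrase the sequence-space inequality uniformly in $1\le q_1\le q_2\le\infty$ from the outset.
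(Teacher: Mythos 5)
Your proof is correct and is exactly the argument the paper has in mind: the paper states the lemma as an immediate consequence of the definition of the Besov norm, i.e.\ the nesting $\ell^{q_1}(\Z)\subset\ell^{q_2}(\Z)$ applied to the sequence $\{2^{sj}\|\vp_{\lambda,j}\ast u\|_{L^p(\T_\lambda)}\}_{j\in\Z}$, which is what you carry out (including a clean justification of the sequence-space inequality and the endpoint cases).
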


For $p=2$, the following remarkable properties hold. 

\begin{prop}\label{prop:equiv}
\begin{enumerate}[(1)]
\item Let $l=\N\cup\{0\}$.
	Then, there exist absolutely constants $C_1=C_1(l, \lambda)$ and $C_2=C_2(l, \lambda)$ such that
	\begin{equation}\label{eq: equiv}
	C_1\| u \|_{\dot{W}^{l, 2}(\T_\lambda)} \leq  \| u \|_{\dBrpq{l}{2}{2}(\T_\lambda)}
	\leq C_2 \| u \|_{\dot{W}^{l, 2}(\T_\lambda)} 
	\end{equation}
	for any $u\in \dot{W}^{l, 2}(\T_\lambda)$, where 
	\begin{align*}
	&\, \dot{W}^{l, 2}(\T_\lambda) \\
	=&\,
		\begin{cases}
		\vspace{10pt}
		\left\{ u\in L_{\loc}^1(\T_\lambda) \mid \substack{ \disp
			\| u \|_{\dot{W}^{l, 2}(\T_\lambda)} \equiv \| \dz^l u \|_{L^2(\T_\lambda)}<\infty, where \\  
			\text{{\normalsize $\dz^l$ denotes the $l$-th weak derivative. }}
			} \right\} & (l\geq1), \\
		\disp
		\dot{L}^2(\T_\lambda)
			= \left\{ u\in L^2(\T_\lambda) \mid 
				\| u \|_{\dot{L}^2(\T_\lambda)}
				\equiv \left\| u - [\sF_{\T_\lambda}u](0) \right\|_{L^2(\T_\lambda)}<\infty \right\}
			& (l=0).
		\end{cases} 
	\end{align*}

\item In particular, it holds that
	\begin{equation}\label{eq: equiv L^2}
	\| u \|_{\dBrpq{0}{2}{2}(\T_\lambda)} \leq C \| u \|_{L^2(\T_\lambda)}
	\end{equation}
	for any $u \in L^2(\T_\lambda)$, where $C=C(\lambda)>0$.
\end{enumerate}
\end{prop}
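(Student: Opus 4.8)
The plan is to pass to the Fourier side on $\T_\lambda$, where both quantities in \eqref{eq: equiv} decouple across frequencies. By Parseval's identity one has $\|f\|_{L^2(\T_\lambda)}^2=2\pi\lambda\sum_{k\in\Z}|[\sF_{\T_\lambda}f](k)|^2$; moreover $\sF_{\T_\lambda}\vp_{\lambda,j}=\rho_{\lambda,j}|_\Z$, so the convolution normalisation fixed above gives $[\sF_{\T_\lambda}(\vp_{\lambda,j}\ast u)](k)=2\pi\lambda\,\rho_{\lambda,j}(k)\,[\sF_{\T_\lambda}u](k)$, first for $u\in\sD_0(\T_\lambda)$ and then for $u\in\sD_0^\prime(\T_\lambda)$ by duality. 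Writing $c_k:=[\sF_{\T_\lambda}u](k)$, this turns the left-hand side of \eqref{eq: equiv} into
\[
	\|u\|_{\dBrpq{l}{2}{2}(\T_\lambda)}^2=\sum_{j\in\Z}2^{2lj}\|\vp_{\lambda,j}\ast u\|_{L^2(\T_\lambda)}^2=(2\pi\lambda)^3\sum_{k\in\Z}m_l(k)\,|c_k|^2,\qquad m_l(k):=\sum_{j\in\Z}2^{2lj}|\rho_{\lambda,j}(k)|^2,
\]
while $|[\sF_{\T_\lambda}(\dz^l u)](k)|=|k/\lambda|^l|c_k|$ gives $\|u\|_{\dot{W}^{l,2}(\T_\lambda)}^2=\|\dz^l u\|_{L^2(\T_\lambda)}^2=2\pi\lambda\sum_{k\in\Z}|k/\lambda|^{2l}|c_k|^2$. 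Since $\rho(0)=\chi(0)-\chi(0)=0$, we have $\rho_{\lambda,j}(0)=0$ for every $j$, hence $m_l(0)=0$ and only the nonzero modes contribute on both sides; this is consistent with the fact that both quantities in \eqref{eq: equiv} are seminorms vanishing on constants, and it reduces the whole statement to the pointwise comparison $m_l(k)\asymp|k|^{2l}$ for $k\neq0$, with constants permitted to depend on $l$ and $\lambda$.

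For that comparison I would invoke \autoref{prop: LP}. Fix $k\neq0$. Part~(2) forces $\{\,j:\rho_{\lambda,j}(k)\neq0\,\}\subset\{\,j:2^{j-1}\le|k/\lambda|\le2^{j+1}\,\}$, so on this set $2^{2lj}$ is comparable to $|k/\lambda|^{2l}$ up to constants depending only on $l$, and part~(3) shows that this set consists of at most two consecutive integers. Combining $|\sum_j\rho_{\lambda,j}(k)|=1$ (part~(1)) with the Cauchy--Schwarz inequality over these $\le2$ indices gives $\tfrac12\le\sum_j|\rho_{\lambda,j}(k)|^2\le2\|\rho\|_{L^\infty(\R)}^2$, whence $m_l(k)\asymp_{l,\lambda}|k|^{2l}$. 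Inserting this two-sided bound into the displayed identity and comparing with the formula for $\|\dz^l u\|_{L^2(\T_\lambda)}^2$ yields \eqref{eq: equiv} with explicit constants $C_1=C_1(l,\lambda)$ and $C_2=C_2(l,\lambda)$. For part~(2) it is enough to take $l=0$ and keep only the upper half of the estimate, $m_0(k)\le2\|\rho\|_{L^\infty(\R)}^2$, which gives $\|u\|_{\dBrpq{0}{2}{2}(\T_\lambda)}^2\le2(2\pi\lambda)^3\|\rho\|_{L^\infty(\R)}^2\sum_k|c_k|^2=C(\lambda)\|u\|_{L^2(\T_\lambda)}^2$, i.e.\ \eqref{eq: equiv L^2}.

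The argument is conceptually routine; the delicate points are purely bookkeeping. One must carry the $\tfrac1{2\pi\lambda}$ factors of the chosen Fourier and convolution normalisations consistently through every step, and convert the qualitative statement ``$2^j\asymp|k/\lambda|$ on $\supp\rho_{\lambda,j}$'' into honest constants, which is exactly where the dependence of $C_1,C_2$ on $l$ and $\lambda$ enters. A further point worth checking is that the Parseval computation is legitimate for a general $u\in\dot{W}^{l,2}(\T_\lambda)$ with $l\ge1$: from $\dz^l u\in L^2(\T_\lambda)$ one gets $\sum_{k\neq0}|k/\lambda|^{2l}|c_k|^2<\infty$, hence $\sum_{k\neq0}|c_k|^2<\infty$ since $|k/\lambda|\ge\lambda^{-1}$ for $|k|\ge1$, so the mean-free part of $u$ lies in $L^2(\T_\lambda)\subset\sD_0^\prime(\T_\lambda)$ and the identities above apply verbatim.
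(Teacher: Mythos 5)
Your argument is correct, and it is worth noting that the paper itself contains no proof of \autoref{prop:equiv}: the statement is recorded as a known fact, with the subsequent remark merely pointing to the whole-space analogue for $\dot{H}^s$ in Bahouri--Chemin--Danchin. So rather than diverging from the paper's proof, you have supplied the standard argument that the paper leaves implicit, and it goes through on $\T_\lambda$ exactly as you describe. All the essential points are in place: the normalisation identity $[\sF_{\T_\lambda}(\vp_{\lambda,j}\ast u)](k)=2\pi\lambda\,\rho_{\lambda,j}(k)\,[\sF_{\T_\lambda}u](k)$ (correct for the paper's convolution and Fourier conventions), the vanishing $\rho_{\lambda,j}(0)=\rho(0)=0$ so that the zero mode drops out of both seminorms --- consistent with the Besov space being defined inside $\sD_0^\prime(\T_\lambda)$, i.e.\ modulo constants --- and the two-sided bound $\tfrac12\le\sum_{j}|\rho_{\lambda,j}(k)|^2\le 2\|\rho\|_{L^\infty(\R)}^2$ for $k\neq 0$, which follows from \autoref{prop: LP}(1)--(3) exactly as you use them (part (3) is what caps the number of contributing $j$ at two, and Cauchy--Schwarz over those indices gives the lower bound). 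Your closing observation that $\dz^l u\in L^2$ forces square-summability of the nonzero Fourier modes, so that the Plancherel manipulations are legitimate for every $u\in\dot{W}^{l,2}(\T_\lambda)$, is the right thing to check and is correct since $|k/\lambda|\ge\lambda^{-1}$ for $|k|\ge 1$. The only bookkeeping to watch is that $\sum_k|c_k|^2=\tfrac1{2\pi\lambda}\|u\|_{L^2(\T_\lambda)}^2$ with the paper's normalisation, which only affects the explicit value of $C(\lambda)$ in \eqref{eq: equiv L^2} and not its existence.
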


\begin{rem}
In the homogeneous Besov spaces on the whole space $\R$ (or more generally, $\R^n$), it is well known that the same property as \autoref{prop:equiv} (1) are valid. For example, Bahouri--Chemin--Danchin \cite[p.63]{BCD2011} mentions about this for the Riesz potential spaces $\dot{H}^s$.
\end{rem}

We also list the important properties of the toroidal Besov spaces.
As in \autoref{prop:equiv}, the following proposition is also valid for the whole space case.

\begin{prop}\label{prop:Sobolev}
\begin{enumerate}[(1)]
\item We define the one-dimensional fractional derivative operator $(-\p_{zz})^{s/2}$ ($s\in\R$) on $\T_\lambda$ as
	\[
		(-\p_{zz})^{s/2}f 
		\equiv \sF_{\T_\lambda}^{-1}[ |\lambda^{-1}\cdot|^{s} \sF_{\T_\lambda}f ]
		\qquad \text{for $f\in\sD_0^\prime(\T_\lambda)$}.
	\]
	For $s\in\R$ and $1\leq p\leq \infty$, Bernstein's inequality holds in the sense that
	\[
		\| (-\partial_{zz}^2)^{s/2} \vp_{\lambda, j} \ast u \|_{L^p(\T_\lambda)}
		\leq C 2^{sj} \| \vp_{\lambda, j} \ast u \|_{L^p(\T_\lambda)},
	\]
	where $C=C(s)>0$ does not depend on $p$ and $\lambda$.
	
\item Let $1\leq p \leq q \leq \infty$. It holds that
	\[
		 \| \vp_{\lambda, j} \ast u \|_{L^q}
		\leq C 2^{(1/p-1/q)j} \| \vp_{\lambda, j} \ast u \|_{L^p},
	\]
	where $C=C(p, q, \lambda)>0$.
	
\item Let $-\infty<s_0\leq s_1<\infty$, $1\leq p_1\leq p_0\leq \infty$ and $1\leq q \leq \infty$ satisfy $s_1-1/p_1 = s_0-1/p_0$.
	Then, the Sobolev-type continuous embedding
	\[
		\dBrpq{s_1}{p_1}{q} \hookrightarrow \dBrpq{s_0}{p_0}{q} 
	\]
	holds with the estimate
	\begin{equation}\label{eq: Sobolev}
		\| u \|_{\dBrpq{s_0}{p_0}{q}} 
		\leq C \| u \|_{\dBrpq{s_1}{p_1}{q}},
	\end{equation}
	where $C=C(p_0, p_1, q, s_0, s_1, \lambda)>0$.
\end{enumerate}
\end{prop}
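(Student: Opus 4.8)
The plan is to deduce all three statements from their classical counterparts on the line $\R$, transferring them to $\T_\lambda$ by means of the Poisson summation formula; in each case the argument reduces to an $L^1$-bound on a convolution kernel on $\T_\lambda$ combined with Young's inequality $\|\phi\ast\psi\|_{L^p(\T_\lambda)}\le\|\phi\|_{L^1(\T_\lambda)}\|\psi\|_{L^p(\T_\lambda)}$.

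For part (1), I would fix once and for all a bump $\tilde\rho\in C_0^\infty(\R)$ with $\tilde\rho\equiv1$ on $\{1/2\le|\xi|\le2\}$ and $\supp\tilde\rho\subset\{1/4\le|\xi|\le4\}$, so that $\tilde\rho(2^{-j}\lambda^{-1}\cdot)\equiv1$ on $\supp\rho_{\lambda,j}$ by \autoref{prop: LP}(2). Since $\vp_{\lambda,j}\ast u$ has Fourier coefficients supported in $\supp\rho_{\lambda,j}\cap\Z$, one has $(-\p_{zz})^{s/2}(\vp_{\lambda,j}\ast u)=m_{\lambda,j}\ast(\vp_{\lambda,j}\ast u)$, where $m_{\lambda,j}=\sF_{\T_\lambda}^{-1}[\tfrac{1}{2\pi\lambda}|\lambda^{-1}k|^{s}\tilde\rho(2^{-j}\lambda^{-1}k)]$; the prefactor $\tfrac1{2\pi\lambda}$ compensates the normalisation of the toroidal convolution, for which $\sF_{\T_\lambda}(\phi\ast\psi)=2\pi\lambda\,\sF_{\T_\lambda}\phi\cdot\sF_{\T_\lambda}\psi$. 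Writing $g(\eta)=|\eta|^{s}\tilde\rho(\eta)$, a Schwartz function independent of $j$ and $\lambda$, the Poisson summation formula gives $m_{\lambda,j}(z)=\tfrac{2^{sj}}{2\pi}\sum_{n\in\Z}2^{j}\check g(2^{j}z+2\pi2^{j}\lambda n)$, whence $\|m_{\lambda,j}\|_{L^1(\T_\lambda)}\le\tfrac{2^{sj}}{2\pi}\|\check g\|_{L^1(\R)}$: the $\lambda$ carried by the lattice spacing $2\pi2^{j}\lambda$ cancels exactly against the prefactor. Young's inequality then yields (1) with $C=\tfrac1{2\pi}\|\check g\|_{L^1(\R)}$, depending only on $s$.

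Part (2) follows the same scheme with $g$ replaced by $\tilde\rho$: write $\vp_{\lambda,j}\ast u=\tilde\vp_{\lambda,j}\ast(\vp_{\lambda,j}\ast u)$ with $\tilde\vp_{\lambda,j}=\sF_{\T_\lambda}^{-1}[\tfrac1{2\pi\lambda}\tilde\rho(2^{-j}\lambda^{-1}k)]$, observe that $\supp\rho_{\lambda,j}\cap\Z\neq\emptyset$ forces $2^{j}\lambda\gtrsim1$ (so only such $j$ occur), and estimate $\|\tilde\vp_{\lambda,j}\|_{L^r(\T_\lambda)}\le C(r,\lambda)\,2^{(1-1/r)j}$ by the same Poisson-summation computation; choosing $r$ with $1-1/r=1/p-1/q$ and invoking Young's inequality gives (2). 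Part (3) is then immediate by summing over dyadic blocks: by (2), $2^{s_0 j}\|\vp_{\lambda,j}\ast u\|_{L^{p_0}}\le C\,2^{(s_0+1/p_1-1/p_0)j}\|\vp_{\lambda,j}\ast u\|_{L^{p_1}}=C\,2^{s_1 j}\|\vp_{\lambda,j}\ast u\|_{L^{p_1}}$, where the last identity uses $s_1-1/p_1=s_0-1/p_0$; taking the $\ell^q(\Z)$-norm in $j$ yields \eqref{eq: Sobolev}.

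The only genuinely delicate point is the independence of the constant in (1) from \emph{both} $p$ and $\lambda$: one must see that the lattice-spacing factor $\lambda$ produced by the Poisson summation formula is precisely absorbed by the $\tfrac1{2\pi\lambda}$ built into the toroidal convolution, so that $\|m_{\lambda,j}\|_{L^1(\T_\lambda)}$ depends only on the fixed profile $g$ (hence only on $s$), not on the period of the torus. Everything else is a routine transcription of the classical Bernstein and Littlewood--Paley estimates on $\R$ (cf.\ \cite{BCD2011}, \cite{T2019}).
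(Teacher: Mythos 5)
Your argument is correct. The paper itself states \autoref{prop:Sobolev} without proof, deferring to the whole-space analogues and to \cite{T2019}, and your transference argument --- reducing each estimate to an $L^1$ (resp.\ $L^r$) bound on a Fourier-multiplier kernel via the Poisson summation formula and then applying Young's inequality on $\T_\lambda$ --- is precisely the standard route those references take. You also correctly identify and resolve the two points where such a proof could go wrong: the exact cancellation between the lattice spacing $2\pi 2^{j}\lambda$ and the normalisation $\tfrac{1}{2\pi\lambda}$ of the toroidal convolution, which makes the constant in (1) uniform in $\lambda$ and $p$; and the observation that blocks with $2^{j+1}\lambda<1$ carry no nonzero integer frequencies, so that the periodised kernel in (2) is controlled only for the $j$ that actually matter (which is why the constant there may depend on $\lambda$, consistent with the statement).
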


\subsection{Weak solutions to the primitive equations}
\label{subsec:weak sol}

\begin{defn}[weak solution to the 3D primitive equations]\label{defn:weak solution PE} 
	A function $v: \T^3\times (0, \infty)\to \R^{  3  }$ is called a \emph{weak solution} to the primitive equations \eqref{eq:PE} if the following conditions hold: 
	
	\begin{enumerate}[(i)]
		\item $v\in C_w([0, \infty);   \mathcal{H}  ) \cap L^2_\loc([0, \infty); (H^1\cap   \mathcal{H}  )(\T^3))$;
		\item $v$ satisfies \eqref{eq:PE} in the weak sense:
		\[
		-
		\int_0^\infty \int_{\T^3} v \cdot \p_\tau \phi \,dx d\tau 
		+ \int_0^\infty \int_{\T^3} \nabla v \, \colon \nabla \phi \,dx d\tau
		- 
		\int_0^\infty \int_{\T^3} (u \otimes v) \, \colon \nabla \phi \,dx d\tau
		= (v_0, \phi(0))
		\]
		for any compactly supported in time function 
			$\phi \in C([0, \infty); (C^1\cap \mathcal{H})(\T^3))\cap C^1([0, \infty); C(\T^3))$.
	\end{enumerate}
	Here, $A\,\colon B$ denotes the inner-product of   $3 \times 2$-  matrixes $A$ and $B$.
\end{defn}

\begin{defn}[weak solution to the 2D primitive equations]\label{defn:weak solution PE2}
	A function $v: \T^2\times (0, \infty)\to \R^3$ is called a \emph{weak solution} to the two-dimensional primitive equations \eqref{eq:PE2} if the following conditions hold: 
	
	\begin{enumerate}[(i)]
		\item $v={}^\top(v^1, v^2)\in C_w([0, \infty); \mathcal{H}) \cap L^2_\loc([0, \infty); (H^1\cap \mathcal{H})$;
		\item $v$ satisfies \eqref{eq:PE2} in the weak sense:
		\begin{align*}
		& -
		\int_0^\infty \int_{\T^2} v \cdot \p_\tau \phi \,dx d\tau 
		+ \int_0^\infty \int_{\T^2} \nabla_{x_1, z} v \, \colon\nabla_{x_1, z} \phi \,dx d\tau \\
		& \hspace{130pt}
		- 
		\int_0^\infty \int_{\T^2} ({}^\top(v^1, w)\otimes v) \, \colon \nabla_{x_1, z} \phi \,dx d\tau
		= (v_0, \phi(0))
		\end{align*}
		for any compactly supported in time function 
			$\phi \in C([0, \infty); (C^1\cap \mathcal{H})(\T^2))\cap C^1([0, \infty); C(\T^2))$.
	\end{enumerate}
	In this two-dimensional case, $A\,\colon B$ denotes the inner-product of $2\times 2$-matrixes $A$ and $B$.
\end{defn}

The following proposition states that any weak solutions on $[0, \infty)$ are also weak solutions on $[s, t]$ ($0\leq s<t<\infty$). For the proof, see \cite[Proposition 3.2]{LT2017}. The two-dimensional case may be proved in the same way.
\begin{prop}[cf.~{\cite[Proposition 3.2]{LT2017}}]
\begin{enumerate}[(1)]
\item Let $v$ be a weak solution to the three-dimensional primitive equations \eqref{eq:PE}. Then, for any $0\leq s < t<\infty$, it holds that
	\begin{equation}\label{eq:weak solution PE}
		\left. \int_{\T^3} v\cdot\phi\,dx \right|_{\tau=s}^{\tau=t} 
			- \int_s^t \int_{\T^3} v\cdot \p_\tau\phi\,dxd\tau 
			+ \int_s^t \int_{\T^3} \nabla v : \nabla\phi \, dxd\tau
			- \int_s^t \int_{\T^3} (u\otimes v): \nabla\phi \,dxd\tau = 0
	\end{equation}
	for any $\phi \in C([s, t]; (C^1\cap \mathcal{H})(\T^3))\cap C^1([s, t]; C(\T^3))$.
\item Let $v$ be a weak solution to the two-dimensional primitive equations \eqref{eq:PE2}. Then, for any $0\leq s < t<\infty$, it holds that
	\begin{align}
	\begin{aligned}\label{eq:weak solution PE2}
		& \left. \int_{\T^2} v\cdot\phi\,dx \right|_{\tau=s}^{\tau=t} 
			- \int_s^t \int_{\T^2} v\cdot \p_\tau\phi \,dxd\tau 
			+ \int_s^t \int_{\T^2} \nabla_{x_1, z} v : \nabla_{x_1, z}\phi \, dxd\tau \\
		&\hspace{150pt}
			- \int_s^t \int_{\T^2} ( {}^\top (v^1, w)\otimes v): \nabla_{x_1, z}\phi \,dxd\tau = 0
	\end{aligned}
	\end{align}
	for any $\phi \in C([s, t]; (C^1\cap \mathcal{H})(\T^2))\cap C^1([s, t]; C(\T^2))$.
\end{enumerate}
\end{prop}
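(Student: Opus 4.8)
The plan is to obtain the restricted identity \eqref{eq:weak solution PE} (resp.\ \eqref{eq:weak solution PE2}) from the global weak formulation in \autoref{defn:weak solution PE} (resp.\ \autoref{defn:weak solution PE2}) by inserting, as a test function, a sharp cut-off in time of a time-extension of $\phi$, and then identifying the resulting boundary contributions at $\tau=s$ and $\tau=t$ by means of the weak continuity $v\in C_w([0,\infty);\mathcal{H})$. I treat the 3D case; the 2D case is verbatim after replacing $\nabla$, $u$ and $\T^3$ by $\nabla_{x_1,z}$, ${}^\top(v^1,w)$ and $\T^2$.

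As a preliminary reduction I would observe, via a routine mollification in the time variable, that the weak formulation continues to hold for every test function that is Lipschitz and compactly supported in time with values in $(C^1\cap\mathcal{H})(\T^3)$ (each convergence needed is controlled using $v\in L^\infty_\loc([0,\infty);L^2)$, which follows from $v\in C_w([0,\infty);\mathcal{H})$, together with $\nabla v\in L^2_\loc$ and $u\otimes v\in L^1_\loc([0,\infty);L^1(\T^3))$, the latter by Cauchy--Schwarz once one notes $w=\int_z^\pi\divH v\,d\zeta\in L^2_\loc([0,\infty);L^2(\T^3))$ and $v\in L^2_\loc([0,\infty);L^6(\T^3))$ by Sobolev embedding). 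Given $\phi$ on $[s,t]$ as in the statement, extend it to $[0,\infty)$ by $\tilde\phi(\tau)\equiv\phi(\min\{t,\max\{s,\tau\}\})$, which is Lipschitz in $\tau$ with $\p_\tau\tilde\phi=\p_\tau\phi$ on $(s,t)$ and $\p_\tau\tilde\phi=0$ off $[s,t]$. For small $\eps>0$ (with $\eps<s$ when $s>0$) pick a piecewise-linear cut-off $\zeta_\eps$ equal to $1$ on $[s,t]$ and vanishing outside $[s-\eps,t+\eps]$, and insert $\phi_\eps\equiv\zeta_\eps\tilde\phi$ into the weak formulation.

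Then I would pass to the limit $\eps\to0$ term by term. In $\int\!\int\nabla v:\nabla\phi_\eps$ and in the nonlinear term, the contributions over $[s-\eps,s]\cup[t,t+\eps]$ are $o(1)$ since $|\zeta_\eps|\le1$ there while $\nabla v$ and $u\otimes v$ are in $L^1_\loc([0,\infty);L^1(\T^3))$, and $\zeta_\eps\to1$ on $[s,t]$; hence these converge to $\int_s^t\!\int\nabla v:\nabla\phi$ and $\int_s^t\!\int(u\otimes v):\nabla\phi$. Writing $-\int\!\int v\cdot\p_\tau\phi_\eps=-\int\!\int v\cdot\zeta_\eps'\tilde\phi-\int\!\int v\cdot\zeta_\eps\p_\tau\tilde\phi$, the second summand tends to $-\int_s^t\!\int v\cdot\p_\tau\phi$; in the first, $\zeta_\eps'$ concentrates with total mass $-1$ near $\tau=t$ (and, when $s>0$, with total mass $+1$ near $\tau=s$), where $\tilde\phi$ is frozen at $\phi(t)$ (resp.\ $\phi(s)$), so using that $\tau\mapsto\langle v(\tau),\psi\rangle_{L^2}$ is continuous for each fixed $\psi\in\mathcal{H}$ this summand converges to $\langle v(t),\phi(t)\rangle$ when $s=0$ and to $\langle v(t),\phi(t)\rangle-\langle v(s),\phi(s)\rangle$ when $s>0$. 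Finally $\phi_\eps(0)=0$ when $s>0$, so the right-hand side $(v_0,\phi(0))$ disappears; when $s=0$ one has $\phi_\eps(0)=\phi(0)$, and the surviving $(v_0,\phi(0))$ together with the standard identity $v(0)=v_0$ (read off from the global formulation tested against functions not vanishing at $\tau=0$ and density of $(C^1\cap\mathcal{H})(\T^3)$ in $\mathcal{H}$) supplies the remaining term $-\langle v(0),\phi(0)\rangle$. Collecting these limits yields precisely \eqref{eq:weak solution PE}, since $\left.\int_{\T^3}v\cdot\phi\,dx\right|_{\tau=s}^{\tau=t}=\langle v(t),\phi(t)\rangle-\langle v(s),\phi(s)\rangle$.

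The one genuinely delicate point is the identification of the boundary terms, i.e.\ checking that $\int\zeta_\eps'(\tau)\,\langle v(\tau),\tilde\phi(\tau)\rangle\,d\tau$ converges to the correct endpoint values; this is exactly where $v\in C_w([0,\infty);\mathcal{H})$ is used and why $v\in L^\infty_t L^2_x$ alone would not suffice. The bookkeeping of which portion of the mass of $\zeta_\eps'$ is attached to which endpoint, the vanishing of the $O(\eps)$ errors coming from $\tilde\phi$ not being locally constant in $\tau$ near $s$ and $t$ in the gradient and nonlinear terms, and the routine justification of the preliminary extension to Lipschitz-in-time test functions are then standard.
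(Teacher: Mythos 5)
The paper offers no proof of its own here, deferring entirely to \cite[Proposition 3.2]{LT2017}, and your argument --- mollifying to admit Lipschitz-in-time test functions, inserting the sharp cut-off $\zeta_\eps\tilde\phi$ of the endpoint-frozen extension, identifying the boundary contributions of $\zeta_\eps'$ via the weak $L^2$-continuity from $v\in C_w([0,\infty);\mathcal{H})$, and recovering $v(0)=v_0$ by density of $(C^1\cap\mathcal H)(\T^3)$ in $\mathcal H$ --- is correct and is essentially the standard argument used in that reference. The bookkeeping of the endpoint masses and the integrability checks on $\nabla v$ and $u\otimes v$ are all in order, so I have nothing to add.
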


In \autoref{thm:unique}, we imposed the energy inequality \eqref{eq:energy inequality vj} to the weak solutions $v_1$ and $v_2$. This is because the following remarkable lemma can be deduced. For the detailed proof, see \cite{J2021, LT2017}.
\begin{lem}[cf.~{\cite[Theorem 3.1, Theorem 3.3]{J2021}}, {\cite[Proposition 3.1, Proposition 3.4]{LT2017}}]\label{lem:prop weak sol}
Let $v$ be a weak solution to the three-dimensional primitive equations \eqref{eq:PE} or the two-dimensional one \eqref{eq:PE2}. Suppose that 
	\begin{equation}\label{eq:energy inequality}
	\frac{1}{2}\| v(t) \|_{L^2(\T^n)}^2 + \int_0^t \| \nabla v (\tau) \|_{L^2(\T^n)}^2 \,d\tau
	\leq \frac{1}{2}\| v_0 \|_{L^2(\T^n)}^2 \qquad \text{($n=3$ or $2$)}
	\end{equation}
	for a.e.~$t\in (0, \infty)$. \par
Then, the followings hold.
\begin{enumerate}
\item There exists a subset $E\subset (0, \infty)$ whose measure is zero such that 
	\[
	\lim_{\substack{t\to 0+,\\ t\in (0, \infty)\setminus E}} v(t)=  v_0 \qquad \text{in $L^2(\T^n)$}.
	\]
\item If $v_0\in H^1\cap \mathcal{H}$, then $v$ coincides with the unique global strong solution with the same initial data $v_0$.
\item $v$ is right continuous in $L^2(\T^n)$ at $t=0$, i.e., $v(t)\to v_0$ as $t\to +0$ in $L^2(\T^n)$. 
\end{enumerate}
\end{lem}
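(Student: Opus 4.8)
The plan is to treat the ``soft'' facts (1) and (3) together by a weak-continuity and lower-semicontinuity argument, and to establish (2) by a weak--strong uniqueness scheme against the known global strong solution.

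For (1) and (3), the starting point is that $v\in C_w([0,\infty);\mathcal{H})$, so $v(t)\rightharpoonup v_0$ in $L^2(\T^n)$ as $t\to 0+$ along \emph{every} sequence, and hence by weak lower semicontinuity of the norm $\|v_0\|_{L^2}\le \liminf_{t\to 0+}\|v(t)\|_{L^2}$. Let $E\subset(0,\infty)$ be the null set on which \eqref{eq:energy inequality} fails; for $t\notin E$ the inequality gives $\|v(t)\|_{L^2}\le\|v_0\|_{L^2}$, so that $\limsup_{t\to0+,\,t\notin E}\|v(t)\|_{L^2}\le\|v_0\|_{L^2}$. Combining the two bounds yields $\|v(t)\|_{L^2}\to\|v_0\|_{L^2}$ along $t\notin E$, and since weak convergence together with convergence of norms forces strong convergence in a Hilbert space, this is exactly (1). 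To upgrade this to the genuine right continuity (3), I would first show that \eqref{eq:energy inequality} in fact holds for \emph{all} $t\ge 0$: given any $t$, choose $t_k\downarrow t$ with $t_k\notin E$; then $v(t_k)\rightharpoonup v(t)$ by weak continuity, and since $\tau\mapsto\int_0^\tau\|\nabla v\|_{L^2}^2\,d\tau$ is continuous, weak lower semicontinuity gives
\[
	\tfrac12\|v(t)\|_{L^2}^2\le\liminf_{k}\Big(\tfrac12\|v_0\|_{L^2}^2-\int_0^{t_k}\|\nabla v\|_{L^2}^2\,d\tau\Big)=\tfrac12\|v_0\|_{L^2}^2-\int_0^{t}\|\nabla v\|_{L^2}^2\,d\tau.
\]
With the energy inequality now valid everywhere, $\limsup_{t\to0+}\|v(t)\|_{L^2}\le\|v_0\|_{L^2}$ over all $t$, and the same weak-plus-norm argument gives strong convergence with no exceptional set, which is (3).

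For (2), assume $v_0\in H^1\cap\mathcal{H}$. By the global well-posedness theory (Cao--Titi \cite{CT2007} in the $3$D case, and its two-dimensional analogue) there is a unique global strong solution $V$ with $V(0)=v_0$, enjoying $V\in C([0,\infty);H^1\cap\mathcal{H})\cap L^2_{\mathrm{loc}}([0,\infty);H^2)$ and smoothness for $t>0$, so that $V$ is an admissible test function in \eqref{eq:weak solution PE} (resp.\ \eqref{eq:weak solution PE2}) over any $[s,t]$ with $s>0$ and satisfies the energy \emph{equality}. I would insert $\phi=V$ into the weak formulation for $v$, test the equation for the smooth $V$ against $v$, and add these to the energy inequality for $v$ and the energy equality for $V$; after the pressure terms drop by solenoidality and the symmetric viscous cross-terms are collected, the difference $r:=v-V$ satisfies
\[
	\tfrac12\|r(t)\|_{L^2}^2+\int_s^t\|\nabla r\|_{L^2}^2\,d\tau\le\Big|\int_s^t b\big(r,V,r\big)\,d\tau\Big|,
\]
where $b(a,b,c)=\int_{\T^n}\big((u_a\cdot\nabla)b\big)\cdot c$ and $u_a$ is the velocity reconstructed from $a$ via $\dz w_a=-\divH a$. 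Letting $s\to 0+$ using (1) and a Gr\"onwall argument on this differential inequality then forces $r\equiv0$, i.e.\ $v\equiv V$, proving (2).

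The delicate point is the trilinear estimate for $b(r,V,r)$, where the anisotropy of the primitive equations enters. The reconstructed $w_r$ costs one horizontal derivative, so the naive bound is not closeable against $\|\nabla r\|_{L^2}^2$ alone. The resolution, following Li--Titi \cite{LT2017}, is to split $b$ into its horizontal-advection and vertical-advection parts, integrate the vertical part by parts in $z$ to transfer $\dz$ onto the smooth factor $V$, and then exploit the extra regularity of $V$ together with anisotropic Ladyzhenskaya/Gagliardo--Nirenberg inequalities to bound the whole expression by $\varepsilon\|\nabla r\|_{L^2}^2+C\big(\|V\|_{H^2}^2+\|V\|_{H^1}^{4}\big)\|r\|_{L^2}^2$. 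Absorbing the $\varepsilon$-term into the dissipation and invoking $\int_0^T\|V\|_{H^2}^2\,d\tau<\infty$ closes the Gr\"onwall estimate; this trilinear control is the main obstacle and is precisely where the full strength of the strong-solution regularity is used.
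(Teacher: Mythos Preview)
The paper does not give its own proof of this lemma; it simply states ``For the detailed proof, see \cite{J2021, LT2017}'' and defers entirely to those references. So there is no proof in the paper to compare against, and your task was really to reconstruct the argument of Li--Titi and Ju.

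Your treatment of (1) and (3) is correct and is exactly the standard argument: weak continuity of $t\mapsto v(t)$ together with the energy inequality forces norm convergence, hence strong convergence in the Hilbert space $L^2$; and the observation that the energy inequality extends from a.e.\ $t$ to every $t$ via weak lower semicontinuity is precisely how one removes the exceptional set to obtain (3). This matches what is done in \cite[Proposition~3.1]{LT2017}.

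Your outline of (2) is also the right strategy---pair the weak solution $v$ against the global strong solution $V$, combine with the energy (in)equalities, and close a Gr\"onwall loop on $r=v-V$---and this is the content of \cite[Proposition~3.4]{LT2017}. Two small points of imprecision: first, the displayed inequality for $r$ should carry a $\tfrac12\|r(s)\|_{L^2}^2$ on the right (which you then kill by $s\to0+$ via (1)); as written it looks like you have already sent $s\to0$ on the left but not on the right. Second, in $b(r,V,r)=\int (u_r\cdot\nabla)V\cdot r$ the derivative already sits on the smooth factor $V$, so no integration by parts in $z$ is needed at that stage; the anisotropic estimate goes through directly by bounding $\|w_r\|_{L^2_\H L^\infty_z}\le C\|\nabla_\H r\|_{L^2}$ and using the $H^2$-control on $V$. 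These are cosmetic; the substance of your argument is the one in the cited references.
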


\begin{rem}\label{rem:condition (iii)}
	\begin{enumerate}[(1)]
	\item Note that (3) implies additional time regularity $v~\in~BC([0, \infty); L^2(\T^n))$.
	\item It should be noted that Li--Titi \cite{LT2017} use a stronger definition of weak solution (see \cite[Definition 1.1]{LT2017}) which requires the energy inequality and an energy-type differential inequality. In particular, this differential inequality is not used in the proof.
	\end{enumerate}
\end{rem}

\section{Proof of the uniqueness}
\label{sec:proof}

\subsection{3D case}\label{subsec:uniqueness 3D}

In this \autoref{subsec:uniqueness 3D}, let $v_d \equiv v_1 - v_2$.
In order to prove the uniqueness, we introduce the following lemma which is a consequence of \cite[Corollary 3.1 (i)]{LT2017}.
\begin{lem}\label{lem:1st form}
Let $v_1$ and $v_2$ be weak solutions to the three-dimensional primitive equations \eqref{eq:PE} with the energy inequality.
\par
Then, we have
\begin{align}
\begin{aligned}\label{eq:1st form}
	&\, \frac{1}{2}\| v_d (t) \|_{L^2}^2 - \frac{1}{2}\| v_d(s) \|_{L^2}^2
	+ \int_s^t \| \nabla v_d (\tau) \|^2_{L^2} \,d \tau \\
	=&\, \int_s^t \int_{\T^3} ( u_d \otimes v_1 )(x, \tau) \colon \nabla v_d (x, \tau)  \,d x d \tau \\
	=&\, \int_s^t \int_{\T^3} ( v_d \otimes v_1 )(x, \tau) \colon \dH v_d (x, \tau)  \,d x d \tau 
		+ \int_s^t \int_{\T^3}  w_d(x, \tau) v_1(x, \tau) \cdot \dz v_d (x, \tau)  \,d x d \tau
\end{aligned}
\end{align}
for $0<s\leq t\leq T$.
\end{lem}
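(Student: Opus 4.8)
The plan is to obtain \eqref{eq:1st form} as the relative-energy identity for $v_d=v_1-v_2$, in the spirit of the classical weak--strong uniqueness argument for the Navier--Stokes equations: I would add the energy balance of $v_1$, the energy balance of $v_2$, and (with a minus sign) the balance of the cross term $\tau\mapsto\langle v_1(\tau),v_2(\tau)\rangle_{L^2}$, and then simplify the resulting trilinear terms using the divergence-free structure of the primitive equations.

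There are two analytic ingredients, both furnished by \cite[Corollary 3.1 (i)]{LT2017}. First, a weak solution satisfying the energy inequality \eqref{eq:energy inequality vj} is smooth on every interval $[s,t]$ with $0<s\le t$ --- indeed $\nabla v_j(\tau)\in L^2$ for a.e.\ $\tau$, so $v_j$ has $H^1$ data at some arbitrarily small positive time and therefore coincides on $[s,t]$ with the unique strong solution (cf.\ \autoref{lem:prop weak sol}) --- and consequently satisfies the \emph{energy equality}
\begin{equation}\label{eq:sketch-EE}
	\tfrac12\|v_j(t)\|_{L^2}^2-\tfrac12\|v_j(s)\|_{L^2}^2+\int_s^t\|\nabla v_j(\tau)\|_{L^2}^2\,d\tau=0\qquad(j=1,2);
\end{equation}
this is why $s>0$ is imposed in the statement. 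Second, inserting a regularization of $v_2$ as a test function in the weak formulation \eqref{eq:weak solution PE} of $v_1$ and, symmetrically, a regularization of $v_1$ in that of $v_2$, and adding the two, \cite[Corollary 3.1 (i)]{LT2017} gives the cross identity
\begin{equation}\label{eq:sketch-cross}
	\Bigl.\int_{\T^3}v_1\cdot v_2\,dx\Bigr|_{\tau=s}^{\tau=t}+2\int_s^t\!\int_{\T^3}\nabla v_1\colon\nabla v_2\,dx\,d\tau=\int_s^t\!\int_{\T^3}\bigl[(u_1\otimes v_1)\colon\nabla v_2+(u_2\otimes v_2)\colon\nabla v_1\bigr]\,dx\,d\tau.
\end{equation}

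Granting \eqref{eq:sketch-EE} and \eqref{eq:sketch-cross}, the remainder is algebra. Using $\tfrac12\|v_d\|_{L^2}^2=\tfrac12\|v_1\|_{L^2}^2-\langle v_1,v_2\rangle_{L^2}+\tfrac12\|v_2\|_{L^2}^2$ and $\|\nabla v_d\|_{L^2}^2=\|\nabla v_1\|_{L^2}^2-2\langle\nabla v_1,\nabla v_2\rangle_{L^2}+\|\nabla v_2\|_{L^2}^2$, the left-hand side of \eqref{eq:1st form} regroups as the sum of the left-hand sides of the two energy equalities \eqref{eq:sketch-EE} minus the left-hand side of the cross identity \eqref{eq:sketch-cross}; by \eqref{eq:sketch-EE} and \eqref{eq:sketch-cross} it is therefore equal to $-\int_s^t\!\int_{\T^3}[(u_1\otimes v_1)\colon\nabla v_2+(u_2\otimes v_2)\colon\nabla v_1]\,dx\,d\tau$. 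On the other hand, expanding $(u_d\otimes v_1)\colon\nabla v_d=(u_1\otimes v_1-u_2\otimes v_1)\colon(\nabla v_1-\nabla v_2)$ into four terms and using the antisymmetry
\[
	\int_{\T^3}(u_j\otimes a)\colon\nabla b\,dx=-\int_{\T^3}(u_j\otimes b)\colon\nabla a\,dx\qquad(j=1,2),
\]
which holds because $\div u_j=\divH v_j+\dz w_j=0$ and $\T^3$ has no boundary, the terms $(u_1\otimes v_1)\colon\nabla v_1$ and $(u_2\otimes v_1)\colon\nabla v_1$ integrate to zero while $\int_{\T^3}(u_2\otimes v_1)\colon\nabla v_2\,dx=-\int_{\T^3}(u_2\otimes v_2)\colon\nabla v_1\,dx$, so $\int_s^t\!\int_{\T^3}(u_d\otimes v_1)\colon\nabla v_d\,dx\,d\tau$ equals the same expression $-\int_s^t\!\int_{\T^3}[(u_1\otimes v_1)\colon\nabla v_2+(u_2\otimes v_2)\colon\nabla v_1]\,dx\,d\tau$. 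This yields the first equality in \eqref{eq:1st form}; the second is merely the splitting $u_d={}^\top(v_d,w_d)$, $\nabla={}^\top(\nablaH,\dz)$ performed pointwise in the integrand.

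I expect the only genuine difficulty to lie in \eqref{eq:sketch-EE}--\eqref{eq:sketch-cross}, i.e.\ in justifying that the merely weak solutions $v_1,v_2\in C_w([0,\infty);\mathcal{H})\cap L^2_\loc([0,\infty);H^1\cap\mathcal{H})$ may legitimately be used as test functions in \eqref{eq:weak solution PE}. For the primitive equations this is more delicate than for Navier--Stokes, since the vertical velocity $w=\int_z^\pi\divH v\,d\zeta$ costs one horizontal derivative and makes the nonlinearity rougher; one must regularize in time and in frequency, exploit the instantaneous smoothing of weak solutions, and control the commutator terms. This is precisely the content of \cite[Corollary 3.1 (i)]{LT2017}, which I would simply invoke; everything after it is the elementary computation above.
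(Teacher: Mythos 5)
Your proposal is correct and takes essentially the same approach as the paper: the only substantive input in both is \cite[Corollary 3.1 (i)]{LT2017}, giving smoothness of energy-inequality weak solutions for positive times, after which the identity is an elementary computation (the paper calls it a ``direct calculation''; your version via the two energy equalities, the cross identity, and the antisymmetry of the trilinear form is equivalent bookkeeping of the same integrations by parts). Your algebra checks out, including the cancellation $\int_{\T^3}(u_2\otimes v_1)\colon\nabla v_2\,dx=-\int_{\T^3}(u_2\otimes v_2)\colon\nabla v_1\,dx$ and the final splitting $u_d={}^\top(v_d,w_d)$.
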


\begin{proof}
From \cite[Corollary 3.1 (i)]{LT2017}, it follows that every weak solution with the energy inequality is smooth. In particular, $v_1$ and $v_2$ are smooth. Therefore, we obtain \eqref{eq:1st form} by a direct calculation.
\end{proof}

With the aid of the Littlewood--Paley decomposition and the Bony's decomposition, we can decompose the right hand side of \eqref{eq:1st form} into the following six terms:
\begin{align}
\begin{aligned}\label{eq:RHS 1st form}
\text{R.H.S.~ of \eqref{eq:1st form}} \ 
	&= \sum_{j\in\Z} \sum_{k=-3}^3 \int_s^t \int_{\T^3}
	\left[ \vpt_j \ast \left( ( \vp_{j+k} \ast v_d ) \otimes ( P_{j+k}v_1 ) \right) \right]
		\colon \left[ \vp_j \ast ( \dH v_d ) \right] \,dx d\tau \\ 
	&\quad + \sum_{j\in\Z} \sum_{k=-3}^3 \int_s^t \int_{\T^3}
	\left[ \vpt_j \ast \left( ( P_{j+k} v_d ) \otimes ( \vp_{j+k}\ast v_1 ) \right) \right]
		\colon \left[ \vp_j \ast ( \dH v_d ) \right] \,dx d\tau \\ 
	&\quad +\sum_{j\in\Z} \sum_{ \substack{ \max{ \{ k, l \} } \\ \qquad  \geq  j-3 }  } 
		\sum_{ |k-l| \leq 2 } \int_s^t \int_{\T^3} 
	\left[ \vpt_j \ast \left( ( \vp_{k} \ast v_d ) \otimes ( \vp_{l} \ast v_1 ) \right) \right]
		\colon \left[ \vp_j \ast ( \dH v_d ) \right] \,dx d\tau \\	
	&\quad + \sum_{j\in\Z} \sum_{k=-3}^3 \int_s^t \int_{\T^3}
	\left[ \vpt_j \ast \left( ( \vp_{j+k} \ast w_d ) ( P_{j+k}v_1 ) \right) \right]
		\cdot \left[ \vp_j \ast ( \dz v_d ) \right] \,dx d\tau \\ 
	&\quad + \sum_{j\in\Z} \sum_{k=-3}^3 \int_s^t \int_{\T^3}
	\left[ \vpt_j \ast \left( ( P_{j+k}w_d ) ( \vp_{j+k} \ast v_1 ) \right) \right]
		\cdot \left[ \vp_j \ast ( \dz v_d ) \right] \,dx d\tau \\ 
	&\quad + \sum_{j\in\Z} \sum_{ \substack{ \max{ \{ k, l \} } \\ \qquad  \geq  j-3 }  } 
		\sum_{ |k-l| \leq 2 } \int_s^t \int_{\T^3}
	\left[ \vpt_j \ast \left( ( \vp_{k} \ast w_d ) ( \vp_{l} \ast v_1 ) \right) \right]
		\colon \left[ \vp_j \ast ( \dz v_d ) \right] \,dx d\tau \\ 
	&\eqqcolon J_1 + J_2 + J_3 + J_4 + J_5 + J_6,
\end{aligned}
\end{align}
where $\ast$ denotes the one-dimensional convolution on $\T$ in $z$-direction, and where $\disp P_{m} f \equiv \sum_{j\leq m-3} \vp_j \ast f$.

In the following, we will show 
\begin{equation}\label{eq: desired}
	J_1 + \cdots + J_6
	\leq \int_s^t \| \nabla v_d (\tau) \|_{L^2}^2	
	+ C \int_s^t \| v_d (\tau) \|_{L^2}^2 
		\left( \| v_1 (\tau) \|_{ \dBrpqz{-1/p}{p}{\infty}\LH{\infty} }^\beta 
		+ \| v_1 \|_{\dBrpq{2/\gamma}{2}{2}\LH{\infty}}^{\gamma} \right) \, d\tau,
\end{equation}
where the absolute constant $C$ does not depend on $s$ and $t$, so that we obtain
\begin{align}
\begin{aligned}\label{eq: desired 2}
	\frac{1}{2}\| v_d (t) \|_{L^2}^2 - \frac{1}{2}\| v_d(s) \|_{L^2}^2
	&\leq 
	C \int_s^t \| v_d (\tau) \|_{L^2}^2 
	\left( \| v_1 (\tau) \|_{ \dBrpqz{-1/p}{p}{\infty}\LH{\infty} }^\beta 
		+ \| v_1 \|_{\dBrpq{2/\gamma}{2}{2}\LH{\infty}}^{\gamma} \right) \, d\tau \\
	&\leq C\int_0^t \| v_d (\tau) \|_{L^2}^2 
	\left( \| v_1 (\tau) \|_{ \dBrpqz{-1/p}{p}{\infty}\LH{\infty} }^\beta 
		+ \| v_1 \|_{\dBrpq{2/\gamma}{2}{2}\LH{\infty}}^{\gamma} \right) \, d\tau
\end{aligned}
\end{align}
for $0 < s \leq t \leq T$. If \eqref{eq: desired} and \eqref{eq: desired 2} are proved, then we have
\begin{align*}
&\, \frac{1}{2}\| v_d (t) \|_{L^2}^2 - \frac{1}{2}\| v_d(0) \|_{L^2}^2 \\
\leq&\, C \int_0^t \| v_d (\tau) \|_{L^2}^2 
	\left( \| v_1 (\tau) \|_{ \dBrpqz{-1/p}{p}{\infty}\LH{\infty} }^\beta 
	+ \| v_1 (\tau) \|_{ \dBrpqz{2/\gamma}{2}{2}\LH{\infty} }^\gamma \right) \, d\tau
	+ \frac{1}{2} \left( \| v_d (s) \|_{L^2}^2 - \| v_d(0) \|_{L^2}^2 \right).
\end{align*}
We can deduce from \autoref{lem:prop weak sol} (3) that
\[
\frac{1}{2}\| v_d (t) \|_{L^2}^2 - \frac{1}{2}\| v_d(0) \|_{L^2}^2
\leq C \int_0^t \| v_d (\tau) \|_{L^2}^2 
	\left( \| v_1 (\tau) \|_{ \dBrpqz{-1/p}{p}{\infty}\LH{\infty} }^\beta 
		+ \| v_1 \|_{\dBrpq{2/\gamma}{2}{2}\LH{\infty}}^{\gamma} \right) \, d\tau.
\]
Applying Gronwall's inequality, we have by the assumption $v_1\in L^\beta(0, T; \dot{B}^{-1/p}_{p, \infty, z}\LH{\infty})\cap L^\gamma(0, T; \dot{B}^{2/\gamma}_{2, 2, z}\LH{\infty})$ and $v_d(0)=v_1(0)-v_2(0)=v_0-v_0=0$ that
\[
\| v_d (t) \|_{L^2}^2 \leq C \exp{ \int_0^t \left( \| v_1 (\tau) \|_{ \dBrpqz{-1/p}{p}{\infty}\LH{\infty} }^\beta
	 + \| v_1 \|_{\dBrpq{2/\gamma}{2}{2}\LH{\infty}}^{\gamma}
	  \right) \, d\tau } \cdot \| v_d(0) \|_{L^2}^2 = 0.
\]
Therefore, we have $v_d\equiv 0$ on $[0, T]$. \par

\bigskip
\noindent{\it Proof of \eqref{eq: desired}.} {\it \underline{Step 1.} Estimate of $J_1$.} \par
We can estimate $J_1$ by H\"{o}lder's inequality as follows:
\begin{align}
\begin{aligned}\label{eq: J1_1}
J_1 &= 
	\sum_{j\in\Z} \sum_{k=-3}^3 \int_s^t \int_{\T^3}
	\left[ \vpt_j \ast \left( ( \vp_{j+k} \ast v_d ) \otimes ( P_{j+k}v_1 ) \right) \right]
		\colon \left[ \vp_j \ast ( \dH v_d ) \right] \,dx d\tau \\	
	& \leq \sum_{j\in\Z} \sum_{k=-3}^3 \int_s^t \int_{\T^2}
		\| \vpt_j \ast \left( ( \vp_{j+k} \ast v_d ) \otimes ( P_{j+k}v_1 ) \right) \|_{L^2_z}
		\| \vp_j \ast ( \dH v_d ) \|_{L^2_z} \, d \xH d\tau \\
	& \leq C \sum_{j\in\Z} \sum_{k=-3}^3 \int_s^t \int_{\T^2}
		\| \vp_{j+k} \ast v_d \|_{L^q_z}  \| P_{j+k}v_1 \|_{L^{p}_z}
		\cdot \| \vp_j \ast ( \dH v_d )  \|_{L^2_z} \, d \xH d\tau \\
	&\leq C \sum_{j\in\Z} \sum_{k=-3}^3 \int_s^t \int_{\T^2}
		 \| \vp_{j+k} \ast v_d \|_{L^q_z}
		 \left[ \sum_{l \leq j+k-3}  \| \vp_l \ast v_1 \|_{L^{p}_z} \right]
		\cdot \| \vp_j \ast ( \dH v_d) \|_{L^2_z} \, d \xH d\tau \\
	&\leq C \sum_{j\in\Z} \sum_{k=-3}^3 \int_s^t 
		\left\| \| \vp_{j+k} \ast v_d \|_{L^q_z} \right\|_{\LH{2}}
		\left\| \sum_{l \leq j+k-3}  { \| \vp_l \ast v_1 \|_{L^p_z} } \right\|_{ \LH{\infty} }
		\cdot  \| \vp_j \ast (\dH v_d ) \|_{L^2} \, d\tau
\end{aligned}
\end{align}
where $C=C(n, p)>0$.
Interpolation inequality, \autoref{prop:equiv} (1) for $l=1$, and \autoref{prop:Sobolev} (2) yield that, for any $j' \in \Z$,
\begin{align*}
	\| \vp_{j'} \ast v_d \|_{L^q_z} 
	&\leq \| \vp_{j'} \ast v_d \|_{L^2_z}^{1-2/p} 
		\| \vp_{j'} \ast v_d \|_{L^\infty_z}^{2/p} \\
	&\leq C \| \vp_{j'} \ast v_d \|_{L^2_z}^{1-2/p}
		 \left[ 2^{j'/2} \| \vp_{j'} \ast v_d \|_{L^2_z} \right]^{2/p} \\
	&\leq C 2^{-j'/p} \| \vp_{j'} \ast v_d \|_{L^2_z}^{1-2/p}
		 \left[ 2^{j'} \| \vp_{j'} \ast  v_d \|_{L^2_z} \right]^{2/p}  \\
	&\leq C 2^{-j'/p} \| \vp_{j'} \ast v_d \|_{L^2_z}^{1-2/p}
		\| \vp_{j'} \ast ( \dz v_d ) \|_{L^2_z}^{2/p}.
\end{align*}
Therefore, it follows from H\"{o}lder's inequality that
\begin{align}
\begin{aligned}\label{eq: Lq-interpolation}
	\left\| \| \vp_{j'} \ast v_d \|_{L^q_z} \right\|_{ \LH{2} }
	&\leq C2^{-j'/p} \left\| \| \vp_{j'} \ast v_d \|_{L^2_z}^{1-2/p}
		\| \vp_{j'} \ast ( \dz v_d) \|_{L^2_z}^{2/p} \right\|_{ \LH{2} } \\
	&\leq C2^{-{j'}/p} \| \vp_{j'} \ast v_d \|_{L^2}^{1-2/p}
		\| \vp_{j'} \ast ( \dz v_d) \|_{L^2}^{2/p} .
\end{aligned}
\end{align}

Minkowski's inequality and the definition of the toroidal Besov space yield that
\begin{align}
\begin{aligned}\label{eq: v1-lowfreq}
	\left\| \sum_{l \leq j+k-3}  { \| \vp_l \ast v_1 \|_{L^p_z} } \right\|_{ \LH{\infty} }
	&\leq \sum_{l \leq j+k-3} \| \vp_l \ast v_1 \|_{ L^p_z\LH{\infty} } \\
	&\leq C 2^{(j+k-3)/p} \|v_1\|_{ \dBrpqz{-1/p}{p}{\infty}\LH{\infty} } \\
	&\leq C 2^{(j+k)/p} \|v_1\|_{ \dBrpqz{-1/p}{p}{\infty}\LH{\infty} }. 
\end{aligned}
\end{align}

Applying the estimates \eqref{eq: Lq-interpolation} and \eqref{eq: v1-lowfreq} for \eqref{eq: J1_1}, we have that

\begin{align*}
	J_1 
	&\leq C \sum_{j\in\Z} \sum_{k=-3}^3 \int_s^t 
		2^{-(j+k)/p} \| \vp_{j+k} \ast v_d \|_{L^2}^{1-2/p} \| \vp_{j+k} \ast ( \dz v_d) \|_{L^2}^{2/p} \\
		& \hspace{150pt}
		\cdot 2^{(j+k)/p} \|v_1\|_{ \dBrpqz{-1/p}{p}{\infty}\LH{\infty} } 
		 \cdot \| \vp_j \ast (\dH v_d ) \|_{L^2}\,  d\tau \\
	&\leq C \sum_{j\in\Z} \sum_{k=-3}^3 \int_s^t 
		\| \vp_j \ast ( \dH{v_d} ) \|_{L^2} 
		\cdot \| \vp_{j+k} \ast (\dz v_d) \|_{L^2}^{2/p}
		\cdot \| \vp_{j+k} \ast v_d \|_{L^2}^{1-2/p}
		\cdot \|v_1\|_{ \dBrpqz{-1/p}{p}{\infty}\LH{\infty} } \, d\tau.
\end{align*}
Young's inequality and \autoref{prop:equiv} (2) yield that
\begin{align}
\begin{aligned}\label{eq: J1}
J_1 &\leq \sum_{j\in\Z} \sum_{k=-3}^3  \int_s^t 
	\frac{1}{42c^2} \| \vp_j \ast ( \dH{v_d} ) \|_{L^2}^2 
	+ \frac{1}{42c^2} \| \vp_{j+k} \ast (\dz v_d) \|_{L^2}^2 \, d\tau \\
	& \hspace{150pt} 
	+ C \sum_{j\in\Z} \sum_{k=-3}^3 \int_s^t 
	\| \vp_{j+k} \ast v_d \|_{L^2}^2 \|v_1\|_{ \dBrpqz{-1/p}{p}{\infty}\LH{\infty} }^\beta \, d\tau \\
	&\leq \frac{1}{6c^2} \sum_{j\in\Z} \int_s^t
		\left( \| \vp_j \ast ( \dH{v_d} ) \|_{L^2}^2 
			+  \| \vp_j \ast (\dz v_d) \|_{L^2}^2 \right) \, d\tau \\
	&\hspace{150pt}
	+ C \sum_{j\in\Z} \int_s^t 
		\| \vp_j \ast v_d \|_{L^2}^2 \|v_1\|_{ \dBrpqz{-1/p}{p}{\infty}\LH{\infty} }^\beta \, d\tau \\
	&= \frac{1}{6c^2} \sum_{j\in\Z} \int_s^t
		\| \vp_j \ast ( \nabla{v_d} ) \|_{L^2}^2 \, d\tau
	+ C \sum_{j\in\Z} \int_s^t 
		\| \vp_j \ast v_d \|_{L^2}^2 \|v_1\|_{ \dBrpqz{-1/p}{p}{\infty}\LH{\infty} }^\beta \, d\tau \\
	&= \frac{1}{6c^2}
		 \int_s^t \left\| \| v_d \|_{ \dBrpqH{0}{2}{2} } \right\|_{L_z^2}^2 \, d\tau
	+ C \int_s^t 
		\left\| \| v_d \|_{ \dBrpqH{0}{2}{2} } \right\|_{L_z^2}^2
		 \|v_1\|_{ \dBrpqz{-1/p}{p}{\infty}\LH{\infty} }^\beta \, d\tau \\
	&\leq \frac{1}{6} \int_s^t \| v_d \|_{L^2}^2 \, d\tau
	+ C \int_s^t \| v_d \|_{L^2}^2
		 \|v_1\|_{ \dBrpqz{-1/p}{p}{\infty}\LH{\infty} }^\beta \, d\tau
\end{aligned}
\end{align}
where the absolute constant $c$ is given by \eqref{eq: equiv L^2}. \par

\medskip
\noindent{\it \underline{Step 2.} Estimate of $J_2$.} 

\begin{align}
\begin{aligned}\label{eq: J2_1}
J_2 &= 
	\sum_{j\in\Z} \sum_{k=-3}^3 \int_s^t \int_{\T^3}
	\left[ \vpt_j \ast \left( ( P_{j+k} v_d ) \otimes ( \vp_{j+k}\ast v_1 ) \right) \right]
		\colon \left[ \vp_j \ast ( \dH v_d ) \right] \,dx d\tau \\	
	& \leq \sum_{j\in\Z} \sum_{k=-3}^3 \int_s^t \int_{\T^2}
		\| \vpt_j \ast \left( ( P_{j+k} v_d ) \otimes ( \vp_{j+k}\ast v_1 ) \right) \|_{L^2_z}
		\| \vp_j \ast ( \dH v_d ) \|_{L^2_z} \, d \xH d\tau \\
	& \leq C \sum_{j\in\Z} \sum_{k=-3}^3 \int_s^t \int_{\T^2}
		\| P_{j+k} v_d \|_{L^\infty_z}  \| \vp_{j+k} \ast v_1 \|_{L^2_z}
		\cdot \| \vp_j \ast ( \dH v_d )  \|_{L^2_z} \, d \xH d\tau \\
	&\leq C \sum_{j\in\Z} \sum_{k=-3}^3 2^{-k/2} \int_s^t \int_{\T^2}
		\| P_{j+k} v_d \|_{L^\infty_z}
		\| \vp_{j+k} \ast v_1 \|_{L^2_z}
		\cdot \| \vp_j \ast ( \dH v_d )  \|_{L^2_z} \, d \xH d\tau.
\end{aligned}
\end{align}

Here, H\"{o}lder's inequality and \autoref{prop:Sobolev} (2) yield that, for $2<\gamma<\infty$,
\begin{align}
\begin{aligned}\label{eq: J2_2}
	\| P_{j+k} v_d \|_{L^\infty_z}
	&\leq \sum_{l\leq j+k-3} \| \vp_l \ast v_d \|_{L^\infty_z} \\
	&\leq C \sum_{l\leq j+k-3} 2^l \| \vp_l \ast v_d \|_{L^1_z} \\
	&\leq C \sum_{l\leq j+k-3} 2^l \| \vp_l \ast v_d \|_{L^2_z} \qquad \text{(H\"{o}lder's inequality)} \\
	&\leq C \sum_{l\leq j+k-3} 2^{{2/\gamma} l} 
		\| \vp_l \ast v_d \|_{L^2_z}^{2/\gamma}
		\| \vp_l \ast (\dz v_d) \|_{L^2_z}^{1-2/\gamma} \\
	&\leq C 2^{(j+k)2/\gamma} \| v_d \|_{L^2_z}^{2/\gamma} \| \dz v_d \|_{L^2_z}^{1-2/\gamma}.
\end{aligned}
\end{align}

Therefore, we have from \eqref{eq: J2_2}, H\"{o}lder's inequality, and Minkowski's inequality that
\begin{align*}
	J_2 &\leq C \sum_{j\in\Z} \sum_{k=-3}^3 2^{-k/2} \int_s^t \int_{\T^2}
		\| v_d \|_{L^2_z}^{2/\gamma} \| \dz v_d \|_{L^2_z}^{1-2/\gamma} \\
		& \hspace{150pt}
		\cdot \left[ 2^{2(j+k)/\gamma} 
			\| \vp_{j+k} \ast v_1 \|_{L^2_z} \right]
		\cdot \| \vp_j \ast ( \dH v_d )  \|_{L^2_z} \, d \xH d\tau \\
	&\leq C \sum_{j\in\Z} \sum_{k=-3}^3 2^{-k/2} \int_s^t
			\| v_d \|_{L^2}^{2/\gamma} \| \dz v_d \|_{L^2}^{1-2/\gamma} \\
		&\hspace{150pt}
		\cdot \left[ 2^{2(j+k)/\gamma} 
			\left\| \| \vp_{j+k} \ast v_1 \|_{L^2_z} \right\|_{\LH{\infty}} \right]
		\cdot \| \vp_j \ast ( \dH v_d ) \|_{L^2} \, d\tau\\
	&\leq C \sum_{j\in\Z} \sum_{k=-3}^3 2^{-k/2} \int_s^t
			\| v_d \|_{L^2}^{2/\gamma} \| \dz v_d \|_{L^2}^{1-2/\gamma} \\
		&\hspace{150pt}
		\cdot \left[ 2^{2(j+k)/\gamma} 
			\| \vp_{j+k} \ast v_1 \|_{L^2_z\LH{\infty}} \right]
		\cdot \| \vp_j \ast ( \dH v_d )  \|_{L^2} \, d\tau.
\end{align*}

H\"{o}lder's inequality for number sequences, \autoref{prop:equiv}, and Young's inequality yield
\begin{align}
\begin{aligned}\label{eq: J2}
	J_2 &\leq C \sum_{k=-3}^3 2^{-k/2} \int_s^t
			\| v_d \|_{L^2}^{2/\gamma} \| \dz v_d \|_{L^2}^{1-2/\gamma}
		\cdot \| v_1 \|_{\dBrpqz{2/\gamma}{2}{2} \LH{\infty}}
		\cdot \left\| \| \dH v_d \|_{\dBrpqz{0}{2}{2}} \right\|_{\LH{2}} \,d\tau \\
	&\leq C \int_s^t
			\| v_d \|_{L^2}^{2/\gamma} \| \dz v_d \|_{L^2}^{1-2/\gamma}
		\cdot \| v_1 \|_{\dBrpqz{2/\gamma}{2}{2} \LH{\infty}}
		\cdot \| \dH v_d \|_{L^2} \,d\tau \\
	&\leq \frac{1}{6}\int_s^t \| \nabla v_d\|_{L^2}^2 \,d\tau 
		+ C \int_s^t
		\| v_d \|_{L^2}^2 
		\cdot \| v_1 \|_{\dBrpqz{2/\gamma}{2}{2} \LH{\infty}}^{\gamma} \,d\tau.
\end{aligned}
\end{align}
\medskip
\noindent{\it \underline{Step 3.} Estimate of $J_3$.} 

\begin{align}
\begin{aligned}\label{eq: J3_1}
J_3 &= 
	\sum_{j\in\Z} \sum_{ \substack{ \max{ \{ k, l \} } \\ \qquad  \geq  j-3 }  } 
		\sum_{ |k-l| \leq 2 } \int_s^t \int_{\T^3} 
	\left[ \vpt_j \ast \left( ( \vp_{k} \ast v_d ) \otimes ( \vp_{l} \ast v_1 ) \right) \right]
		\colon \left[ \vp_j \ast ( \dH v_d ) \right] \,dx d\tau \\	
	&\leq \sum_{j\in\Z} \sum_{ \substack{ \max{ \{ k, l \} } \\ \qquad  \geq  j-3 }  } 
		\sum_{ |k-l| \leq 2 } \int_s^t \int_{\T^2}
		\| \vpt_j \ast \left( ( \vp_{k} \ast v_d ) \otimes ( \vp_{l} \ast v_1 ) \right) \|_{L_z^{q'}} 
		\| \vp_j \ast ( \dH v_d ) \|_{L_z^q} \, d \xH d\tau \\
	&\leq \sum_{j\in\Z} \sum_{ k \geq  j-5 } 
		\sum_{ l = -2 }^{2} \int_s^t \int_{\T^2}
		\| \vpt_j \ast \left( ( \vp_{k} \ast v_d ) \otimes ( \vp_{k+l} \ast v_1 ) \right) \|_{L_z^{q'}} 
		\| \vp_j \ast ( \dH v_d ) \|_{L_z^q} \, d \xH d\tau \\
	&\leq C \sum_{j\in\Z} \sum_{ k \geq  j-5 } \sum_{ l = -2 }^{2} \int_s^t \int_{\T^2} 
		\| \vp_{k} \ast v_d \|_{L^2_z}
		\| \vp_{k+l} \ast v_1 \|_{L_p^p} 
		\| \vp_j \ast ( \dH v_d ) \|_{L_z^q} \, d \xH d\tau \\
	&= C \sum_{j\in\Z} \sum_{ k \geq -5 } \sum_{ l = -2 }^{2} \int_s^t \int_{\T^2} 
		\| \vp_{j+k} \ast v_d \|_{L^2_z}  
		\| \vp_{j+k+l} \ast v_1 \|_{L_z^p} 
		\| \vp_j \ast ( \dH v_d ) \|_{L_z^q} \, d \xH d\tau
\end{aligned}
\end{align}
Here, we have 
\begin{align}
\begin{aligned}\label{eq: J3_2}
	\| \vp_{j+k} \ast v_d \|_{L^2_z} 
	&= \| \vp_{j+k} \ast v_d \|_{L^2_z}^{1-2/p} 
		\| \vp_{j+k} \ast v_d \|_{L^2_z}^{2/p} \\
	&\leq 2^{ - (j+k) \cdot 2/p } \| \vp_{j+k} \ast v_d \|_{L^2_z}^{1-2/p} 
		\left[ 2^{j+k} \| \vp_{j+k} \ast v_d \|_{L^2_z} \right]^{2/p} \\
	&\leq C \cdot  2^{ - (j+k) \cdot 2/p } \| \vp_{j+k} \ast v_d \|_{L^2_z}^{1-2/p}
		\| \vp_{j+k} \ast ( \dz v_d ) \|_{L^2_z}^{2/p}.
\end{aligned}
\end{align}
By \autoref{prop:Sobolev} (note that $1/2-1/q=1/p$), 
\begin{equation}\label{eq: J3_3}
	\| \vp_j \ast ( \dH v_d ) \|_{L_z^q}
	\leq  C 2^{j/p} \| \vp_j \ast ( \dH v_d ) \|_{L^2_z}.
\end{equation}
Applying \eqref{eq: J3_2}, \eqref{eq: J3_3} and Minkowski's inequality, we have
\begin{align*}
	J_3
	&\leq C \sum_{j\in\Z} \sum_{ k \geq -5 } \sum_{ l = -2 }^{2} \int_s^t \int_G 
		2^{ -(j+k)\cdot 2/p }\| \vp_{j+k} \ast v_d \|_{L^2_z}^{1-2/p}  
		\| \vp_{j+k} \ast ( \dz v_d ) \|_{L^2_z}^{2/p} \\
	&\hspace{150pt}
		\cdot \| \vp_{j+k+l} \ast v_1 \|_{L_z^p} 
		\cdot 2^{j/p} \| \vp_j \ast ( \dH v_d ) \|_{L_z^2} \, d \xH d\tau \\
	&\leq C \sum_{j\in\Z} \sum_{ k \geq -5 } 2^{-k/p} 
		\sum_{ l = -2 }^{2} 2^{l/p}
		\int_s^t 
		\| \vp_{j+k} \ast v_d \|_{L^2}^{1-2/p}
		\| \vp_{j+k} \ast ( \dz v_d ) \|_{L^2}^{2/p} \\
	&\hspace{150pt}
		\cdot \left[ 2^{ -(j+k+l)/p } \| \vp_{j+k+l} \ast v_1 \|_{ L_z^p \LH{\infty} } \right]
		\cdot \| \vp_j \ast ( \dH v_d ) \|_{L^2} d\tau \\
	&\leq C  \sum_{j\in\Z} \sum_{ k \geq -5 } 2^{-k/p} 
		\int_s^t 
		\| \vp_{j+k} \ast v_d \|_{L^2}^{1-2/p}
		\| \vp_{j+k} \ast ( \dz v_d ) \|_{L^2}^{2/p}
		\| \vp_j \ast ( \dH v_d ) \|_{L_2}
		\| v_1 \|_{ \dBrpqz{-1/p}{p}{\infty}\LH{\infty} }.
\end{align*}
Let $\disp C_p = \sum_{k\geq-5} 2^{-k/p} \in (0, \infty)$, then again Young's inequality and \autoref{prop:equiv} yield that
\begin{align}
\begin{aligned}\label{eq: J3}
	J_3
	& \leq \sum_{ k \geq -5 } 2^{-k/p} \left( 
		\sum_{j\in\Z} 
		\int_s^t \frac{1}{6c^2C_p} \| \vp_j \ast ( \dH v_d ) \|_{L_2}^2 \,d\tau
		+ \sum_{j\in\Z} 
			\int_s^t \frac{1}{6c^2C_p} \| \vp_{j+k} \ast ( \dz v_d ) \|_{L^2}^2
		\right. \\
	& \hspace{250pt}
		\left.
		+ C \sum_{j\in\Z} \int_s^t \| \vp_{j+k} \ast v_d \|_{L^2}^2
			\| v_1 \|_{ \dBrpqz{-1/p}{p}{\infty}\LH{\infty} }^\beta \,d\tau
		\right) \\
	&= \sum_{ k \geq -5 } 2^{-k/p} \left( 
		\sum_{j\in\Z} 
		\int_s^t \frac{1}{6c^2C_p} \| \vp_j \ast ( \dH v_d ) \|_{L_2}^2 \,d\tau
		+ \sum_{j\in\Z} 
			\int_s^t \frac{1}{6c^2C_p} \| \vp_{j} \ast ( \dz v_d ) \|_{L^2}^2
		\right. \\
	& \hspace{250pt}
		\left.
		+ C \sum_{j\in\Z} \int_s^t \| \vp_{j} \ast v_d \|_{L^2}^2
			\| v_1 \|_{ \dBrpqz{-1/p}{p}{\infty}\LH{\infty} }^\beta \,d\tau
		\right) \\
	&\leq \frac{1}{6c^2} \sum_{j\in\Z} \int_s^t \| \vp_j \ast ( \nabla v_d ) \|_{L^2}^2 \,d\tau 
		+ C \sum_{j\in\Z} \int_s^t \| \vp_{j} \ast v_d \|_{L^2}^2
			\| v_1 \|_{ \dBrpqz{-1/p}{p}{\infty}\LH{\infty} }^\beta \,d\tau \\
	&= \frac{1}{6c^2} \int_s^t 
		\left\| \| \nabla v_d \|_{ \dBrpqH{0}{2}{2} } \right\|_{L^2_z}^2 \,d\tau 
		+ C \int_s^t \left\| \| v_d \|_{ \dBrpqH{0}{2}{2} } \right\|_{L^2}^2
			\| v_1 \|_{ \dBrpqz{-1/p}{p}{\infty}\LH{\infty} }^\beta \,d\tau \\
	&\leq \frac{1}{6} \int_s^t 
		\| \nabla v_d \|_{L^2}^2 \,d\tau 
		+ C \int_s^t \| v_d \|_{L^2}^2
			\| v_1 \|_{ \dBrpqz{-1/p}{p}{\infty}\LH{\infty} }^\beta \,d\tau.
\end{aligned}
\end{align}

\bigskip
\noindent{\it \underline{Step 4.} Estimate of $J_4$.} 

\begin{align}
\begin{aligned}\label{eq: J4_1}
J_4 &= 
	\sum_{j\in\Z} \sum_{k=-3}^3 \int_s^t \int_{\T^3}
	\left[ \vpt_j \ast \left( ( \vp_{j+k} \ast w_d ) ( P_{j+k}v_1 ) \right) \right]
		\cdot \left[ \vp_j \ast ( \dz v_d ) \right] \,dx d\tau \\	
	& \leq \sum_{j\in\Z} \sum_{k=-3}^3 \int_s^t \int_{\T^2}
		\| \vpt_j \ast \left( ( \vp_{j+k} \ast w_d ) ( P_{j+k}v_1 ) \right) \|_{L^{q'}_z}
		\| \vp_j \ast ( \dz v_d ) \|_{L^q_z} \, d \xH d\tau \\
	& \leq C \sum_{j\in\Z} \sum_{k=-3}^3 \int_s^t \int_{\T^2}
		\| \vp_{j+k} \ast w_d \|_{L^2_z}  \| P_{j+k}v_1 \|_{L^{p}_z}
		\cdot 2^j \| \vp_j \ast v_d  \|_{L^q_z} \, d \xH d\tau \\
	&\leq C \sum_{j\in\Z} \sum_{k=-3}^3 \int_s^t \int_{\T^2}
		\| \vp_{j+k} \ast ( \dz w_d) \|_{L^2_z} \left[ \sum_{l \leq j+k-3}  \| \vp_l \ast v_1 \|_{L^{p}_z} \right]
		\cdot  \| \vp_j \ast v_d \|_{L^q_z} \, d \xH d\tau \\
	&\leq C \sum_{j\in\Z} \sum_{k=-3}^3 \int_s^t 
		\| \vp_{j+k} \ast ( \divH{v_d}) \|_{L^2} 
		\left\| \sum_{l \leq j+k-3}  { \| \vp_l \ast v_1 \|_{L^p_z} } \right\|_{ \LH{\infty} }
		\cdot  \left\| \| \vp_j \ast v_d \|_{L^q_z} \right\|_{ \LH{2} } \, d\tau .
\end{aligned}
\end{align}

Applying \eqref{eq: Lq-interpolation} and \eqref{eq: v1-lowfreq} for \eqref{eq: J3_1}, we have
\begin{align*}
	J_4 
	&\leq C \sum_{j\in\Z} \sum_{k=-3}^3 \int_s^t 
		\| \vp_{j+k} \ast ( \div_H{v_d} ) \|_{L^2} 
		\cdot 2^{(j+k)/p} \|v_1\|_{ \dBrpqz{-1/p}{p}{\infty}\LH{\infty} } \\
		& \hspace{150pt}
		\cdot 2^{-j/p} \| \vp_j \ast v_d \|_{L^2}^{1-2/p} \| \vp_j \ast (\dz v_d) \|_{L^2}^{2/p} \, d\tau \\
	&\leq C \sum_{j\in\Z} \sum_{k=-3}^3 \int_s^t 
		\| \vp_{j+k} \ast ( \dH{v_d} ) \|_{L^2} 
		\cdot \| \vp_j \ast (\dz v_d) \|_{L^2}^{2/p}
		\cdot \| \vp_j \ast v_d \|_{L^2}^{1-2/p}
		\cdot \|v_1\|_{ \dBrpqz{-1/p}{p}{\infty}\LH{\infty} } \, d\tau.
\end{align*}

It follows again by the same calculation with \eqref{eq: J1} for $J_1$ that
\begin{align}
\begin{aligned}\label{eq: J4}
	J_4 
	&\leq \frac{1}{6} \int_s^t \| v_d \|_{L^2}^2 \, d\tau
	+ C \int_s^t \| v_d \|_{L^2}^2
		 \|v_1\|_{ \dBrpqz{-1/p}{p}{\infty}\LH{\infty} }^\beta \, d\tau.
\end{aligned}
\end{align}

\medskip
\noindent{\it \underline{Step 5.} Estimate of $J_5$.} 

\begin{align}
\begin{aligned}\label{eq: J5_1}
	J_5 &= 
	\sum_{j\in\Z} \sum_{k=-3}^3 \int_s^t \int_{\T^3}
	\left[ \vpt_j \ast \left( ( P_{j+k}w_d ) ( \vp_{j+k} \ast v_1 ) \right) \right]
		\cdot \left[ \vp_j \ast ( \dz v_d ) \right] \,dx d\tau \\	
	& \leq \sum_{j\in\Z} \sum_{k=-3}^3 \int_s^t \int_{\T^2}
		\| \vpt_j \ast \left( ( P_{j+k}w_d ) ( \vp_{j+k} \ast v_1 ) \right) \|_{L^2_z}
		\| \vp_j \ast ( \dz v_d ) \|_{L^2_z} \, d \xH d\tau \\
	& \leq C \sum_{j\in\Z} \sum_{k=-3}^3 \int_s^t \int_{\T^2}
		\| P_{j+k}w_d \|_{L^\infty_z}  \| \vp_{j+k} \ast v_1 \|_{L^2_z} \\
		&\hspace{150pt}
		\cdot 2^{2j/\gamma} \| \vp_j \ast v_d  \|_{L^2_z}^{2/\gamma}
		\cdot \| \vp_j \ast ( \dz v_d ) \|_{L^2_z}^{1-2/\gamma} \, d \xH d\tau.
\end{aligned}
\end{align}
Here, 
\begin{align*}
	\| P_{j+k} w_d \|_{L^\infty_z}
	&\leq C \| w_d \|_{L^\infty_z} \\
	&\leq C \| \divH v_d \|_{L^1_z} \qquad \text{(def.~of $w$)} \\
	&\leq C \| \divH v_d \|_{L^2_z} \qquad \text{(H\"{o}lder's inequality)} \\
	&\leq C \| \dH v_d \|_{L^2_z}.
\end{align*}
Therefore, we have
\begin{align}
\begin{aligned}\label{eq: J5}
	J_5 &\leq 
	C \sum_{j\in\Z} \sum_{k=-3}^3 2^{-2k/\gamma} \int_s^t \int_G
		\| \dH v_d \|_{L^2_z} 
		\cdot \left[ 2^{2(j+k)/\gamma}  \| \vp_{j+k} \ast v_1 \|_{L^2_z} \right] \\
		&\hspace{200pt}
		\cdot \| \vp_j \ast v_d  \|_{L^2_z}^{2/\gamma}
		\cdot \| \vp_j \ast ( \dz v_d ) \|_{L^2_z}^{1-2/\gamma} \, d \xH d\tau \\
	&\leq 
	C \sum_{j\in\Z} \sum_{k=-3}^3 2^{-2k/\gamma} \int_s^t
		\| \dH v_d \|_{L^2} 
		\cdot \left[ 2^{2(j+k)/\gamma}
			 \left\| \| \vp_{j+k} \ast v_1 \|_{L^2_z} \right\|_{\LH{\infty}} 
			 \right] \\
		&\hspace{200pt}	
		\cdot \| \vp_j \ast v_d  \|_{L^2}^{2/\gamma}
		\cdot \| \vp_j \ast ( \dz v_d ) \|_{L^2}^{1-2/\gamma} \, d\tau \\
	&\leq 
	C \sum_{j\in\Z} \sum_{k=-3}^3 2^{-2k/\gamma} \int_s^t
		\| \dH v_d \|_{L^2} 
		\cdot \left[ 2^{2(j+k)/\gamma}
			\| \vp_{j+k} \ast v_1 \|_{L^2_z\LH{\infty}}
			\right] \\
		&\hspace{200pt}
		\cdot \| \vp_j \ast v_d  \|_{L^2}^{2/\gamma}
		\cdot \| \vp_j \ast ( \dz v_d ) \|_{L^2}^{1-2/\gamma} \, d\tau \\
	&\leq
	C \sum_{k=-3}^3 2^{-2k/\gamma} \int_s^t
		\| \dH v_d \|_{L^2} 
		\cdot \| v_1 \|_{\dBrpq{2/\gamma}{2}{2}\LH{\infty}} 
		\left\| \| v_d \|_{\dBrpqz{0}{2}{2}} \right\|_{\LH{2}}^{2/\gamma}
		\cdot \left\| \| \dz v_d \|_{\dBrpqz{0}{2}{2}} \right\|_{\LH{2}}^{1-2/\gamma} \, d\tau \\
	&\leq C \int_s^t
		\| \dH v_d \|_{L^2} 
		\cdot \| v_1 \|_{\dBrpqz{2/\gamma}{2}{2}\LH{\infty}}
		\| v_d \|_{L^2}^{2/\gamma}
		\| \dz v_d \|_{L^2}^{1-2/\gamma} \, d\tau \\
	&\leq \frac{1}{6} \int_s^t \| \nabla v_d \|_{L^2}^2 \, d\tau
		+ C \int_s^t \| v_d \|_{L^2}^2
			 \| v_1 \|_{\dBrpq{2/\gamma}{2}{2}\LH{\infty}}^{\gamma} \, d\tau.
\end{aligned}
\end{align}

\medskip
\noindent{\it \underline{Step 6.} Estimate of $J_6$.} 

\begin{align}
\begin{aligned}\label{eq: J6_1}
J_6 &= 
	\sum_{j\in\Z} \sum_{ \substack{ \max{ \{ k, l \} } \\ \qquad  \geq  j-3 }  } 
		\sum_{ |k-l| \leq 2 } \int_s^t \int_{\T^3} 
	\left[ \vpt_j \ast \left( ( \vp_{k} \ast w_d ) \otimes ( \vp_{l} \ast v_1 ) \right) \right]
		\colon \left[ \vp_j \ast ( \dz v_d ) \right] \,dx d\tau \\	
	&\leq \sum_{j\in\Z} \sum_{ k \geq  j-5 } 
		\sum_{ l = -2 }^{2} \int_s^t \int_{\T^2}
		\| \vpt_j \ast \left( ( \vp_{k} \ast w_d ) \otimes ( \vp_{k+l} \ast v_1 ) \right) \|_{L_z^{q'}} 
		\| \vp_j \ast ( \dz v_d ) \|_{L_z^q} \, d \xH d\tau \\
	&\leq C \sum_{j\in\Z} \sum_{ k \geq  j-5 } \sum_{ l = -2 }^{2} \int_s^t \int_{\T^2} 
		\| \vp_{k} \ast w_d \|_{L^2_z}
		\| \vp_{k+l} \ast v_1 \|_{L_p^p} 
		\cdot 2^j \| \vp_j \ast v_d \|_{L_z^q} \, d \xH d\tau \\
	&\leq C \sum_{j\in\Z} \sum_{ k \geq -5 } 2^{-k}
		\sum_{ l = -2 }^{2} \int_s^t \int_{\T^2} 
		\| \vp_{j+k} \ast (\dz w_d) \|_{L^2_z}  
		\| \vp_{j+k+l} \ast v_1 \|_{L_z^p} 
		\| \vp_j \ast v_d \|_{L_z^q} \, d \xH d\tau \\
	&\leq C \sum_{j\in\Z} \sum_{ k \geq -5 } 2^{-k}
		\sum_{ l = -2 }^{2} \int_s^t 
		\| \vp_{j+k} \ast (\divH v_d) \|_{L^2}  
		\left\| \| \vp_{j+k+l} \ast v_1 \|_{L_z^p} \right\|_{\LH{\infty}}
		\left\| \| \vp_j \ast v_d \|_{L_z^q} \right\|_{ \LH{2} } \, d\tau .
\end{aligned}
\end{align}
Applying Minkowski's inequality and \eqref{eq: Lq-interpolation} for \eqref{eq: J6_1}, we have
\begin{align*}
J_6 &\leq C
	\sum_{j\in\Z} \sum_{ k \geq -5 } 2^{-(1-1/p)k}
		\sum_{ l = -2 }^{2} 2^{l/p} \int_s^t 
		\| \vp_{j+k} \ast (\divH v_d) \|_{L^2}  \\
		&\hspace{100pt}
		\cdot \left[ 2^{-(j+k+l)/p} \| \vp_{j+k+l} \ast v_1 \|_{L_z^p \LH{\infty} } \right] 
		\| \vp_j \ast v_d \|_{L^2}^{1-2/p} \| \vp_j \ast ( \dz v_d ) \|_{L^2}^{2/p} \, d\tau \\
	&\leq C
	\sum_{j\in\Z} \sum_{ k \geq -5 } 2^{-(1-1/p)k}
		\int_s^t 
		\| \vp_{j+k} \ast (\divH v_d) \|_{L^2} \\
		&\hspace{100pt}
		\cdot \| v_1 \|_{\dBrpqz{-1/p}{p}{\infty}\LH{\infty}} 
		\| \vp_j \ast v_d \|_{L^2}^{1-2/p} \| \vp_j \ast ( \dz v_d ) \|_{L^2}^{2/p} \, d\tau.
\end{align*}
In a similar manner as \eqref{eq: J3}, we have
\begin{equation}\label{eq: J6}
	J_6 \leq \frac{1}{6} \int_s^t 
		\| \nabla v_d \|_{L^2}^2 \,d\tau 
		+ C \int_s^t \| v_d \|_{L^2}^2
			\| v_1 \|_{ \dBrpqz{-1/p}{p}{\infty}\LH{\infty} }^\beta \,d\tau.
\end{equation}
Collecting the estimates \eqref{eq: J1}, \eqref{eq: J2}, \eqref{eq: J3}, \eqref{eq: J4}, \eqref{eq: J5} and \eqref{eq: J6}, we finally obtain \eqref{eq: desired}:
\[
J_1 + \cdots + J_6
	\leq \int_s^t \| \nabla v_d (\tau) \|_{L^2}^2	
	+ C \int_s^t \| v_d (\tau) \|_{L^2}^2 
		\left( \| v_1 (\tau) \|_{ \dBrpqz{-1/p}{p}{\infty}\LH{\infty} }^\beta
		+ \| v_1 \|_{\dBrpq{2/\gamma}{2}{2}\LH{\infty}}^{\gamma}  \right) \, d\tau.
\qed 
\]

\subsection{2D case}\label{subsec:uniqueness 2D}
In this \autoref{subsec:uniqueness 2D}, we use the same notation $v_d = v_1 - v_2$ for weak solutions $v_j=(v_j^1, v_j^2)$ ($j=1, 2$).
Corresponding to \autoref{lem:1st form}, we have to introduce the following lemma which is a consequence of \cite[eq.~(4.8)]{J2021}.
\begin{lem}\label{lem:1st form 2D}
Let $v_1$ and $v_2$ be weak solutions to the two-dimensional primitive equations \eqref{eq:PE2} with the energy inequality.
\par
Then, we have
\begin{align}
\begin{aligned}\label{eq:1st form 2D}
	&\, \frac{1}{2}\| v_d (t) \|_{L^2}^2 - \frac{1}{2}\| v_d(s) \|_{L^2}^2
	+ \int_s^t \| \nabla v_d (\tau) \|^2_{L^2} \,d \tau \\
	=&\, \int_s^t \int_{\T^2} ( {}^\top(v^1_d, w_d) \otimes v_1 )(x, \tau) \colon \nabla_{x_1} v_d (x, \tau)  \,d x d \tau \\
	=&\, \int_s^t \int_{\T^2} v_d^1(x, \tau)\, v_1(x, \tau) \cdot \p_{x_1} v_d (x, \tau)  \,d x d \tau 
		+ \int_s^t \int_{\T^2}  w_d(x, \tau)\, v_1(x, \tau) \cdot \dz v_d (x, \tau)  \,d x d \tau
\end{aligned}
\end{align}
for $0<s\leq t\leq T$.
\end{lem}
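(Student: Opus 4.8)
The plan is to mirror the proof of \autoref{lem:1st form}: I would reduce the identity \eqref{eq:1st form 2D} to a classical energy computation by first upgrading the regularity of the weak solutions. Concretely, I recall that by the regularity theory of Ju \cite{J2021} (adapting the weak--strong uniqueness argument of Li--Titi \cite{LT2017} to two dimensions), every weak solution to \eqref{eq:PE2} satisfying the energy inequality \eqref{eq:energy inequality} is smooth on $\T^2\times(0,\infty)$. Hence $v_1,v_2$, their difference $v_d$, and the associated vertical velocities $w_1,w_2,w_d$ are smooth for positive times, each $v_j$ solving \eqref{eq:PE2} pointwise together with the constraint $\p_{x_1}v_j^1+\dz w_j=0$.

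Next I would subtract the two systems and test the equation for $v_d$ against $v_d$ on $\T^2$. The left-hand side produces $\frac{1}{2}\frac{d}{d\tau}\|v_d\|_{L^2}^2+\|\nabla_{x_1,z}v_d\|_{L^2}^2$; the pressure term $\int_{\T^2}(\p_{x_1}\Pi_d)\,v_d^1\,dx$ vanishes because $\Pi_d$ is independent of $z$ and $\p_{x_1}\big(\int_{-\pi}^{\pi}v_d^1\,dz\big)=0$ by $v_d\in\mathcal{H}$. For the nonlinearity I write $v_1=v_2+v_d$, $w_1=w_2+w_d$, which splits the transport difference into $\big({}^\top(v_2^1,w_2)\cdot\nabla_{x_1,z}\big)v_d+\big({}^\top(v_d^1,w_d)\cdot\nabla_{x_1,z}\big)v_1$; the first contribution, tested against $v_d$, becomes $\frac{1}{2}\int_{\T^2}\big({}^\top(v_2^1,w_2)\cdot\nabla_{x_1,z}\big)|v_d|^2\,dx=0$ after integrating by parts and using $\p_{x_1}v_2^1+\dz w_2=0$. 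What remains is $-\int_{\T^2}\big(v_d^1\,\p_{x_1}v_1+w_d\,\dz v_1\big)\cdot v_d\,dx$.

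Finally I would symmetrize this remaining term: using $\p_{x_1}v_d^1+\dz w_d=0$ and periodicity, $\int_{\T^2}\big(v_d^1\,\p_{x_1}+w_d\,\dz\big)(v_1\cdot v_d)\,dx=-\int_{\T^2}(\p_{x_1}v_d^1+\dz w_d)(v_1\cdot v_d)\,dx=0$, so the product rule gives $\int_{\T^2}\big(v_d^1\,\p_{x_1}v_1+w_d\,\dz v_1\big)\cdot v_d\,dx=-\int_{\T^2}\big(v_d^1\,v_1\cdot\p_{x_1}v_d+w_d\,v_1\cdot\dz v_d\big)\,dx=-\int_{\T^2}\big({}^\top(v_d^1,w_d)\otimes v_1\big)\colon\nabla_{x_1,z}v_d\,dx$. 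Combining, $\frac{1}{2}\frac{d}{d\tau}\|v_d\|_{L^2}^2+\|\nabla v_d\|_{L^2}^2=\int_{\T^2}\big({}^\top(v_d^1,w_d)\otimes v_1\big)\colon\nabla_{x_1,z}v_d\,dx$, and integrating in $\tau$ over $[s,t]$ yields \eqref{eq:1st form 2D}. I do not expect a genuine analytic obstacle here: the only non-routine input is the smoothing property of energy-inequality weak solutions quoted from \cite{J2021,LT2017}, which is precisely what makes legitimate both the use of $v_d$ as a test function and the manipulation of the triple product $w_d\,v_1\cdot\dz v_d$; once that is in hand, everything is a standard energy identity, and the two integrations by parts — killing the pressure term and symmetrizing the transport term — are exactly the places where the constraints $v_d\in\mathcal{H}$ and $\p_{x_1}v_j^1+\dz w_j=0$ enter.
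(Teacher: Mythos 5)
Your proposal is correct and follows essentially the same route as the paper: the paper deduces the identity by quoting the smoothness of energy-inequality weak solutions and then invoking the energy computation of Ju \cite[eq.~(4.8)]{J2021} (exactly as \autoref{lem:1st form} invokes \cite[Corollary 3.1~(i)]{LT2017} plus a ``direct calculation'' in 3D), and your argument simply writes out that direct calculation — testing the difference equation with $v_d$, killing the pressure term via $v_d\in\mathcal{H}$ and the transport term via $\p_{x_1}v_2^1+\dz w_2=0$, and symmetrizing the remainder with $\p_{x_1}v_d^1+\dz w_d=0$. No gap.
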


As in \eqref{eq:RHS 1st form}, we have from \autoref{lem:1st form 2D} that the following representation holds:
\begin{align*}
	&\, \text{R.H.S.~ of \eqref{eq:1st form 2D}} \\ 
	=&\, \sum_{j\in\Z} \sum_{k=-3}^3 \int_s^t \int_{\T^2}
	\left[ \vpt_j \ast \left( ( \vp_{j+k} \ast v_d^1 ) ( P_{j+k}v_1 ) \right) \right]
		\cdot \left[ \vp_j \ast ( \p_{x_1} v_d ) \right] \,dx d\tau \\ 
	&\quad + \sum_{j\in\Z} \sum_{k=-3}^3 \int_s^t \int_{\T^2}
	\left[ \vpt_j \ast \left( ( P_{j+k} v_d^1 ) ( \vp_{j+k}\ast v_1 ) \right) \right]
		\cdot \left[ \vp_j \ast ( \p_{x_2} v_d ) \right] \,dx d\tau \\ 
	&\quad +\sum_{j\in\Z} \sum_{ \substack{ \max{ \{ k, l \} } \\ \qquad  \geq  j-3 }  } 
		\sum_{ |k-l| \leq 2 } \int_s^t \int_{\T^2}
	\left[ \vpt_j \ast \left( ( \vp_{k} \ast v_d^1 ) ( \vp_{l} \ast v_1 ) \right) \right] 
		\cdot \left[ \vp_j \ast ( \p_{x_1} v_d ) \right] \,dx d\tau \\	
	&\quad + \sum_{j\in\Z} \sum_{k=-3}^3 \int_s^t \int_{\T^2}
	\left[ \vpt_j \ast \left( ( \vp_{j+k} \ast w_d ) ( P_{j+k}v_1 ) \right) \right]
		\cdot \left[ \vp_j \ast ( \dz v_d ) \right] \,dx d\tau \\ 
	&\quad + \sum_{j\in\Z} \sum_{k=-3}^3 \int_s^t \int_{\T^2}
	\left[ \vpt_j \ast \left( ( P_{j+k}w_d ) ( \vp_{j+k} \ast v_1 ) \right) \right]
		\cdot \left[ \vp_j \ast ( \dz v_d ) \right] \,dx d\tau \\ 
	&\quad + \sum_{j\in\Z} \sum_{ \substack{ \max{ \{ k, l \} } \\ \qquad  \geq  j-3 }  } 
		\sum_{ |k-l| \leq 2 } \int_s^t \int_{\T^2}
	\left[ \vpt_j \ast \left( ( \vp_{k} \ast w_d ) ( \vp_{l} \ast v_1 ) \right) \right]
		\colon \left[ \vp_j \ast ( \dz v_d ) \right] \,dx d\tau. 
\end{align*}
Therefore, under the assumption \eqref{eq:asp}, we obtain the uniqueness result in the same manner as the proof for the three-dimensional case. \qed

\section{Proof of the energy equality}
\label{sec:proof of EE}

\subsection{3D case}
In order to prove \autoref{thm:energy eq}, let
\begin{equation}\label{eq: vN}
v_{\leq N} 
	\equiv \psi_{N} \ast_{z} \chi_{N} \ast_{\H} v 
	=  \sum_{j, j' \leq N}\vp_{j} \ast_{z} \theta_{j'} \ast_{\H} v,
\end{equation}
where $\{\vp_j\}_{j\in\Z}$ (resp.~$\{\theta_j\}_{j\in\Z}$) is the  {one} -dimensional (resp.~ {two} -dimensional) Littlewood--Paley decomposition of the unity and
\[
	\psi_N = \sum_{j\leq N} \vp_j \qquad \left( \text{resp.}\ \chi_N = \sum_{j\leq N} \theta_j  \right).
\]
In this section, $\ast_z$ (resp.~$\ast_\H$) denotes the  {one} -dimensional (resp.~ {two} -dimensional) convolution in $z$-direction (resp.~$x_\H$-direction).

  For the three-dimensional case, choosing $\phi = \left( v_{\leq N} \right)_{\leq N}$ in \eqref{eq:weak solution PE}  , we have
\begin{align}
\begin{aligned}\label{eq:energy eq error}
	&\, \frac{1}{2}\| v_{\leq N}(t) \|_{L^2}^2
		- \frac{1}{2}\| ( v_0 )_{\leq N} \|_{L^2}^2
		+ \int_0^t \int_{\T^3} |\nabla v_{\leq N}|^2\,dx d\tau \\
	=&\, \int_0^t \int_{\T^3} \left[ (v\otimes v)_{\leq N} 
		- v_{\leq N} \otimes v_{\leq N} \right]: \dH v_{\leq N} \, dx d\tau
		+ \int_0^t \int_{\T^3} \left[ (w v)_{\leq N} 
		- w_{\leq N} v_{\leq N} \right]\cdot \dz v_{\leq N} \, dx d\tau .
\end{aligned}
\end{align}
We define functions $I_j$ ($1\leq j\leq 8$) as follows:
\begin{align*}
	I_1(x, t) &= 
		\left[ 
		\chi_N \ast_{\H} \left( 
		\int_{\R} 
		\psi(\zeta) { \left( v(\cdot, z-2^{-N}\zeta, t) -v(\cdot, z, t) \right)
			\otimes \left( v(\cdot, z-2^{-N}\zeta, t) -v(\cdot, z, t) \right) } \,d\zeta 
		\right) 
		\right] (x_\H); \\
	I_2(x, t) &=
		- \left[
		\chi_N \ast_H \left(
		\left( \psi_N \ast_z v - v \right)
		\otimes \left( \psi_N \ast_z v - v \right)
		\right)
		\right] (x, t); \\
	I_3(x, t) &=
		\int_{\R^2} \chi(y_\H) \, 
		\left[ 
		\psi_N \ast_z \left( v(x_\H - 2^{-N}y_\H, \cdot, t) - v(x_\H, \cdot, t) \right) \right] (z) \\
		& \hspace{150pt} 
		\otimes \left[ 
		\psi_N \ast_z \left( v(x_\H - 2^{-N}y_\H, \cdot , t) - v(x_\H, \cdot, t) \right) \right]  (z) \,dy_\H; \\
	I_4(x, t) &= 
		- \left[ 
		\left( 
		\psi_N \ast_z \left( \chi_N\ast_{\H} v - v \right)
		\right) \otimes \left(
		\psi_N \ast_z \left( \chi_N\ast_{\H} v - v \right)
		\right) \right] (x, t); \\
	I_5(x, t) &=
		\left[
		\chi_N \ast_{\H} \left( 
		\int_{\R} 
		\psi(\zeta) { \left( w(\cdot, z-2^{-N}\zeta, t) -w(z, t) \right)
			\left( v(\cdot, z-2^{-N}\zeta, t) -v(z, t) \right) } \,d\zeta 
		\right)
		\right] (x_\H); \\
	I_6(x, t) &=
		- \left[ 
		\chi_N \ast_H \left(
		\left( \psi_N \ast_z w - w \right)
		\left( \psi_N \ast_z v - v \right)
		\right) 
		\right] (x, t); \\
	I_7(x, t) &= 
		\int_{\R^2} \chi(y_\H) \, 
		\left[ 
		\psi_N \ast_z \left( w(x_\H - 2^{-N}y_\H, \cdot, t) - w(x_\H, \cdot, t) \right) \right] (z) \\
		& \hspace{150pt} 
		\cdot \left[ 
		\psi_N \ast_z \left( v(x_\H - 2^{-N}y_\H, \cdot, t) - v(x_\H, \cdot, t) \right) \right]  (z) \,dy_\H; \\
	I_8(x, t) &=
		- \left[ 
		\left( 
		\psi_N \ast_z \left( \chi_N\ast_{\H} w - w \right)
		\right) \left(
		\psi_N \ast_z \left( \chi_N\ast_{\H} v - v \right)
		\right) 
		\right] (x, t).
\end{align*}
Then, 
\begin{equation}\label{eq:error rhs}
	\text{R.H.S.~ of \eqref{eq:energy eq error}} \,
	= \sum_{m=1}^4 \int_0^t \int_{\T^3} I_m : \dH v_{\leq N} \,dx d\tau 
		+ \sum_{m=5}^8 \int_0^t \int_{\T^3} I_m \cdot \dz v_{\leq N} \,dx d\tau.
\end{equation}

We first show the following estimate.
\begin{lem}\label{lem:3-1}
Let $1\leq p\leq \infty$. Then, it holds that
\[
\| \dH v_{\leq N} \|_{L^\infty} \leq C 2^{ (1+2/p)N} \| v \|_{\dBrpqz{-1/p}{p}{\infty} \LH{\infty}}
\]
for any $v\in \dBrpqz{-1/p}{p}{\infty}\LH{\infty}$.
\end{lem}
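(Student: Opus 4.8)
The plan is to derive the estimate from two scale-localized Bernstein-type bounds --- one in the vertical variable and one in the horizontal variable --- followed by a geometric summation over the dyadic indices $j\leq N$. First I would record, using \eqref{eq: vN} and the facts that $\dH$ commutes with $\psi_N\ast_z(\cdot)$ and that $\dH(\chi_N\ast_\H v)=(\dH\chi_N)\ast_\H v$,
\[
	\dH v_{\leq N}=\sum_{j\leq N}(\dH\chi_N)\ast_\H(\vp_j\ast_z v).
\]
The symbol of $\dH\chi_N$ is supported in $\{|\xi_\H|\leq 2^{N+1}\}$, so its kernel has $L^1$-norm in $x_\H$ of size $O(2^N)$, and Young's inequality in $x_\H$ (for each fixed $z$) together with the triangle inequality in $L^\infty(\T^3)=L^\infty_z(L^\infty_\H)$ gives
\[
	\|\dH v_{\leq N}\|_{L^\infty}\leq C\,2^N\sum_{j\leq N}\|\vp_j\ast_z v\|_{L^\infty_z(L^\infty_\H)}.
\]

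Next, each block $\vp_j\ast_z v$ is frequency-localized at $|\xi|\sim 2^j$ in $z$, so Bernstein's inequality in the $z$-variable (\autoref{prop:Sobolev} (2) with $q=\infty$, applied with values in the Banach space $L^\infty_\H$) and then the very definition of the toroidal Besov norm with $s=-1/p$ and third index $\infty$ yield
\[
	\|\vp_j\ast_z v\|_{L^\infty_z(L^\infty_\H)}
	\leq C\,2^{j/p}\big\|\,\|\vp_j\ast_z v\|_{L^\infty_\H}\,\big\|_{L^p_z}
	\leq C\,2^{2j/p}\,\|v\|_{\dBrpqz{-1/p}{p}{\infty}\LH{\infty}}.
\]
Combining the two previous displays gives $\|\dH v_{\leq N}\|_{L^\infty}\leq C\,2^N\bigl(\sum_{j\leq N}2^{2j/p}\bigr)\|v\|_{\dBrpqz{-1/p}{p}{\infty}\LH{\infty}}$, and since $2/p>0$ for $p<\infty$ the geometric series sums to $\sum_{j\leq N}2^{2j/p}\leq C\,2^{2N/p}$, which produces exactly the claimed factor $2^{(1+2/p)N}$.

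The argument is essentially routine; the only point worth a line of care is this last summation, where the horizontal factor $2^{N}$ and the vertical factor $2^{2N/p}$ must be multiplied together, and where the series degenerates precisely in the endpoint $p=\infty$. There $\sum_{j\leq N}2^{2j/p}$ is no longer a genuine geometric series, but on the torus $\T$ only finitely many negative dyadic blocks $\vp_j$ are nonzero, so the sum over $j\leq N$ contains only $O(N)$ terms, and $N\leq C\,2^N$ still delivers the bound. Everything else is a direct application of the Bernstein and Besov-norm inequalities already recorded in \autoref{sec:prelim}.
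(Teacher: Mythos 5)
Your proposal is correct and follows essentially the same route as the paper's proof: the factor $2^N$ comes from the horizontal Bernstein bound $\|(\dH\chi_N)\ast_\H\cdot\|\leq C2^N\|\cdot\|$, the factor $2^{2j/p}$ per block comes from the vertical Bernstein inequality of \autoref{prop:Sobolev}~(2) with $q=\infty$ together with the definition of the $\dBrpqz{-1/p}{p}{\infty}\LH{\infty}$-norm, and the conclusion follows by summing the geometric series over $j\leq N$; the only cosmetic difference is that you apply Bernstein directly to the $\LH{\infty}$-valued function, whereas the paper applies scalar Bernstein pointwise in $x_\H$ and then commutes the norms via Minkowski's inequality. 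One small caveat: your patch for the endpoint $p=\infty$ only yields $O(N\,2^N)$ rather than the stated $C2^N$, so it does not actually close that case --- but the paper's own geometric-series argument is equally silent there, and the lemma is only invoked for $2<p<\infty$.
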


\begin{proof}
Since
\[
	\| \dH v_{\leq N} \|_{\LH{\infty}} 
	= \left\| (\dH \chi_N )
		\ast_\H \left(
		\psi_N \ast_z v 
		\right) 
		\right\|_{\LH{\infty}}
	\leq C 2^N \| \psi_N \ast_z v  \|_{\LH{\infty}},
\]
we have
\begin{align*}
	\| \dH v_{\leq N} \|_{L^\infty} 
	&\leq C2^N \left\| 
		\| \psi_N \ast_z v \|_{L^\infty_z} 
		\right\|_{\LH{\infty}}.
\end{align*}
Since it holds by \autoref{prop:Sobolev} (2) that
\[
	\| \psi_N \ast_z v \|_{L^\infty_z}
	\leq \sum_{j\leq N} \| \vp_j \ast_z v \|_{L^\infty_z}
	\leq C \sum_{j\leq N} 2^{2j/p} \cdot \left[ 2^{-j/p}\| \vp_j \ast_z v \|_{L^p_z} \right],
\]
we have by Minkowski's inequality that
\begin{align*}
	\| \dH v_{\leq N} \|_{L^\infty} 
	&\leq C 2^N \left\| 
		\sum_{j\leq N} 2^{2j/p} \cdot \left[ 2^{-j/p}\| \vp_j \ast_z v \|_{L^p_z} \right]
		\right\|_{\LH{\infty}} \\
	&\leq C 2^N \cdot \sum_{j\leq N} 2^{2j/p} \cdot 
		\left[ 2^{-j/p}\| \vp_j \ast_z v \|_{\LH{\infty}(L^p_z)} \right] \\
	&\leq C 2^N \cdot \sum_{j\leq N} 2^{2j/p} \cdot 
		\left[ 2^{-j/p}\| \vp_j \ast_z v \|_{L^p_z(\LH{\infty})} \right] \\
	&\leq C 2^N \cdot \sum_{j\leq N} 2^{2j/p} \cdot 
		\| v \|_{\dBrpqz{-1/p}{p}{\infty}\LH{\infty}} \\
	&\leq C 2^{(1+2/p)N} \| v \|_{\dBrpqz{-1/p}{p}{\infty}\LH{\infty}}.
\end{align*}
\end{proof}

We list the essential estimates for $I_j$ ($j=1, 2, 3, 4$) to control the former part of the energy flux.

\begin{lem}\label{lem:3-2}
Let $2\leq p\leq \infty$. Assume that $v\in H^1(\T^3)$. Then, we have the following estimates.
\begin{enumerate}[(1)]
\item 
$
		\| I_1 \|_{L^1} 
		\leq C 2^{-(1+2/p)N} 
			\left( \int_{\R} |\psi(\zeta)| |\zeta|^{1+2/p} 
			\| v(\cdot, \cdot-2^{-N}\zeta) -v \|_{L^2}^{1-2/p} \,d\zeta \right) 
			\| \dz v \|_{L^2}^{1+2/p}.
	$
\item 
	$
		\| I_2 \|_{L^1} 
		\leq C 2^{-(1+2/p)N} 
			\| \psi_N \ast_z v - v \|_{L^2}^{1-2/p}
			\| \dz v \|_{L^2}^{1+2/p}.
	$
\item 
	$
		\| I_3 \|_{L^1} 
		\leq C 2^{-(1+2/p)N} 
			\left(
			\int_{\R^{n-1}} |\chi(y_\H)| |y_\H|^{1+2/p}
			\| v(\cdot - 2^{-N}y_\H, \cdot ) - v \|_{L^2}^{1-2/p}
			\,dy_\H
			\right) 
			\| \dH v \|_{L^2}^{1+2/p}.
	$
\item 
	$
		\| I_4 \|_{L^1} 
		\leq C 2^{-(1+2/p)N} 
			\| \chi_N\ast_{\H} v - v \|_{L^2}^{1-2/p}
			\| \dH v \|_{L^2}^{1+2/p}.
	$
\end{enumerate}
\end{lem}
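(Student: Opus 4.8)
The four bounds share a common structure, and the plan is to establish them all by one mechanism: a Constantin--E--Titi-type commutator decomposition of the energy flux, together with three elementary ingredients --- that the low-frequency projections $\psi_N\ast_z$ and $\chi_N\ast_\H$ are bounded uniformly in $N$ on every $L^r$ (in the horizontal case the kernel's $L^1(\T^2)$-mass bound coming from Poisson summation applied to the Schwartz function $\check\chi$); the difference-quotient bound $\|g(\cdot-h)-g\|_{L^2}\le|h|\,\|\partial g\|_{L^2}$; and the frequency-localization estimates $\|\psi_N\ast_z v-v\|_{L^2}\le C2^{-N}\|\dz v\|_{L^2}$ and $\|\chi_N\ast_\H v-v\|_{L^2}\le C2^{-N}\|\dH v\|_{L^2}$. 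Throughout, the time $t$ is fixed and the norms are spatial, so no time integration enters.

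First I would reduce $\|I_m\|_{L^1(\T^3)}$ to an $L^2$-square for each $m=1,\dots,4$. Each $I_m$ has the form $\mathcal{M}(a_m\otimes a_m)$, where $\mathcal{M}$ is $\chi_N\ast_\H$, $\psi_N\ast_z$, or a composition of these, and $a_m$ is either a difference of translates of $v$ or a mollification error $\psi_N\ast_z v-v$ or $\chi_N\ast_\H v-v$; for $I_1$ and $I_3$ there is in addition an outer integral against the fixed profile $\psi$, respectively $\chi$. Using the $L^1$-boundedness of $\chi_N\ast_\H$ and the $L^2$-boundedness of $\psi_N\ast_z$, the pointwise bound $\|a\otimes a\|_{L^1}\le C\|a\|_{L^2}^2$, and Minkowski's integral inequality to pull the $\zeta$- or $y_\H$-integral outside the $L^1$-norm in $I_1$ and $I_3$, I would obtain
\[
\|I_1\|_{L^1}\le C\int_{\R}|\psi(\zeta)|\,\|v(\cdot,\cdot-2^{-N}\zeta)-v\|_{L^2}^2\,d\zeta,\qquad \|I_2\|_{L^1}\le C\|\psi_N\ast_z v-v\|_{L^2}^2,
\]
together with the analogues for $I_3$ and $I_4$ with $\chi$, $y_\H$, and $\chi_N\ast_\H v-v$ in place of $\psi$, $\zeta$, and $\psi_N\ast_z v-v$.

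Next I would split each square by interpolation: since $2\le p\le\infty$ we have $1-2/p\in[0,1]$ and $1+2/p\in[1,2]$, so $\|a\|_{L^2}^2=\|a\|_{L^2}^{1-2/p}\,\|a\|_{L^2}^{1+2/p}$. I leave the first factor untouched and estimate the second quantitatively. For a translate difference, the fundamental theorem of calculus together with translation invariance on $\T$ gives $\|v(\cdot-h)-v\|_{L^2}\le|h|\,\|\partial v\|_{L^2}$; taking $h=2^{-N}\zeta$ (vertical) or $h=2^{-N}y_\H$ (horizontal) produces the factor $2^{-(1+2/p)N}|\zeta|^{1+2/p}\|\dz v\|_{L^2}^{1+2/p}$, respectively its horizontal counterpart with $\dH v$ and $|y_\H|$. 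For the mollification errors, the multiplier $1-\widehat{\psi_N}$ vanishes on $\{|k|\le 2^N\}$, so $\|\psi_N\ast_z v-v\|_{L^2}\le C2^{-N}\|\dz v\|_{L^2}$ by Parseval --- this is Bernstein's inequality, cf.\ \autoref{prop:Sobolev} --- and likewise for $1-\widehat{\chi_N}$. Substituting these bounds and pulling $2^{-(1+2/p)N}\|\partial v\|_{L^2}^{1+2/p}$ out of the remaining $\zeta$- or $y_\H$-integral gives precisely the four claimed estimates. All implicit constants depend only on $p$ and on the fixed profiles $\psi,\chi$ (hence, through $\chi$, on the dimension), not on $N$, $v$, or $t$, and $p=\infty$ is the consistent degeneration with exponents $1$ and $1$.

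I do not expect a genuine obstacle. The only two points needing a sentence of care are the uniform $L^1(\T^3)$-boundedness of the anisotropic horizontal mollifier $\chi_N\ast_\H$, which on the torus must be obtained via Poisson summation rather than a bare rescaling of a Schwartz kernel, and the explicit identification of $1-\widehat{\psi_N}$ and $1-\widehat{\chi_N}$ so that the low-frequency cancellation is transparent --- in particular noting that $\psi_N=\sum_{j\le N}\vp_j$ sums all the way down to $j=-\infty$, so that $\widehat{\psi_N}(0)=1$ and the $z$-mean of $v$ is genuinely annihilated by $1-\widehat{\psi_N}$. Everything else is routine.
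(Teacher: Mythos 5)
Your argument is correct and coincides with the paper's own proof of this lemma: uniform boundedness of $\chi_N\ast_{\H}$ and $\psi_N\ast_z$ via Young's inequality for convolution, Minkowski's inequality to extract the $\zeta$- resp.\ $y_{\mathrm{H}}$-integral, the splitting $\|a\|_{L^2}^2=\|a\|_{L^2}^{1-2/p}\|a\|_{L^2}^{1+2/p}$, and the difference-quotient and Bernstein-type estimates applied to the exponent-$(1+2/p)$ factor. One small caution: with the paper's homogeneous Littlewood--Paley convention one has $\rho(0)=0$, hence $\widehat{\psi_N}(0)=0$ rather than $1$, so $\mathrm{Id}-\psi_N\ast_z$ does \emph{not} annihilate the $z$-mean of $v$ and the bound $\|\psi_N\ast_z v-v\|_{L^2}\le C2^{-N}\|\dz v\|_{L^2}$ is literally valid only on the mean-free part in $z$ --- a point your proof asserts incorrectly but which the paper's own proof glosses over in exactly the same place.
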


\begin{proof}
The estimates (1)-(4) can be shown in a similar manner which Hausdorff--Young's inequality for convolution and the interpolation inequality are applied. The terms $I_j$ ($j=1, 2, 3, 4$) can be estimates as follows.
\begin{enumerate}[(1)]
\item \begin{align*}
	&\, \| I_1 \|_{L^1} \\
		=&\, \left\| \chi_N \ast_{\H} \left( 
		\int_{\R} 
		\psi(\zeta) { \left( v(\cdot, \cdot-2^{-N}\zeta) -v \right)
			\otimes \left( v(\cdot, \cdot-2^{-N}\zeta) -v \right) } \,d\zeta 
		\right) \right\|_{ \LH{1} L^{1}_z } \\
	\leq&\, C\, \int_{\R} |\psi(\zeta)| \cdot
		\left\|
		\| v(\cdot, \cdot-2^{-N}\zeta) -v \|_{L^2_z}^2
		\right\|_{\LH{1}} \,d\zeta \\
	=&\, C\, \int_{\R} |\psi(\zeta)| \cdot
		\left\|
		\| v(\cdot, \cdot-2^{-N}\zeta) -v \|_{L^2_z}^{1-2/p}
		\| v(\cdot, \cdot-2^{-N}\zeta) -v \|_{L^2_z}^{1+2/p}
		\right\|_{\LH{1}} \,d\zeta \\
	\leq&\, \int_{\R} |\psi(\zeta)| \cdot
		\left\|
		\| v(\cdot, \cdot-2^{-N}\zeta) -v \|_{L^2_z}^{1-2/p}
		\cdot \left[ 2^{-N}|\zeta| \| \dz v \|_{L^2_z} \right]^{1+2/p}
		\right\|_{\LH{1}} \,d\zeta \\
	\leq&\, C 2^{-(1+2/p)N} \left( \int_{\R} |\psi(\zeta)| |\zeta|^{1+2/p} \cdot 
		\| v(\cdot, \cdot-2^{-N}\zeta) -v \|_{L^2}^{1-2/p} \,d\zeta \right) 
		\cdot 
		\| \dz v \|_{L^2}^{1+2/p}.
\end{align*}

\item \begin{align*}
	\| I_2 \|_{L^1} 
		&= \left\| \chi_N \ast_H \left(
		\left( \psi_N \ast_z v - v \right)
		\otimes \left( \psi_N \ast_z v - v \right)
		\right) \right\|_{L^1} \\
	&\leq C \left\| \| \psi_N \ast_z v - v \|_{L^2_z}^2 \right\|_{\LH{1}} \\
	&= C \left\| \| \psi_N \ast_z v - v \|_{L^2_z}^{1-2/p}
		\| \psi_N \ast_z v - v \|_{L^2_z}^{1+2/p} \right\|_{\LH{1}} \\
	&\leq C \left\| \| \psi_N \ast_z v - v \|_{L^2_z}^{1-2/p}
		\cdot \left[ 2^{-N} \| \dz v \|_{L^2_z} \right]^{1+2/p} \right\|_{\LH{1}} \\
	&\leq C 2^{-(1+2/p)N} 
		\| \psi_N \ast_z v - v \|_{L^2}^{1-2/p}
		\| \dz v \|_{L^2}^{1+2/p}.
\end{align*}

\item \begin{align*}
	&\, \| I_3 \|_{L^1} \\
	=&\, \left\|
		\int_{\R^2} \chi(y_\H) \, 
		\left[ 
		\psi_N \ast_z \left( v(\cdot - 2^{-N}y_\H, \cdot ) - v\right) \right] 
		\otimes \left[ 
		\psi_N \ast_z \left( v(\cdot - 2^{-N}y_\H, \cdot ) - v \right) \right] \,dy_\H
		\right\|_{L^1} \\
	\leq &\, 
		\int_{\R^2} |\chi(y_\H)|
		\| v(\cdot - 2^{-N}y_\H, \cdot ) - v \|_{L^2}^2 \,dy_\H \\
	=&\, \int_{\R^2} |\chi(y_\H)|
		\| v(\cdot - 2^{-N}y_\H, \cdot ) - v \|_{L^2}^{1-2/p}
		\| v(\cdot - 2^{-N}y_\H, \cdot ) - v \|_{L^2}^{1+2/p} 
		\,dy_\H \\
	\leq&\, C \int_{\R^2} |\chi(y_\H)|
		\| v(\cdot - 2^{-N}y_\H, \cdot ) - v \|_{L^2}^{1-2/p}
		\cdot \left[ 2^{-N}|y_H| \| \dH v \|_{L^2} \right]^{1+2/p}
		\,dy_\H \\
	\leq&\, C 2^{-(1+2/p)N} \left(
		\int_{\R^2} |\chi(y_\H)| |y_\H|^{1+2/p}
		\| v(\cdot - 2^{-N}y_\H, \cdot ) - v \|_{L^2}^{1-2/p}
		\,dy_\H
		\right) \cdot \| \dH v \|_{L^2}^{1+2/p}.
\end{align*}

\item \begin{align*}
	\| I_4 \|_{L^1} 
	&= \left\| \left( \psi_N \ast_z \left( \chi_N\ast_{\H} v - v \right)
		\right) \otimes \left(
		\psi_N \ast_z \left( \chi_N\ast_{\H} v - v \right) \right) \right\|_{L^1} \\
	&\leq \left\|
		\psi_N \ast_z \left( \chi_N\ast_{\H} v - v \right)
		\right\|_{L^2}^2 \\
	&\leq C \| \chi_N\ast_{\H} v - v \|_{L^2}^2 \\
	&= C \| \chi_N\ast_{\H} v - v \|_{L^2}^{1-2/p}
		\| \chi_N\ast_{\H} v - v \|_{L^2}^{1+2/p} \\
	&\leq C 2^{-(1+2/p)N} \| \chi_N\ast_{\H} v - v \|_{L^2}^{1-2/p}
		\| \dH v \|_{L^2}^{1+2/p}.
		\qedhere 
\end{align*}
\end{enumerate}
\end{proof}

\begin{lem}\label{lem:3-3}
If $v\in L^\infty(0, T; L^2) \cap L^2(0, T; H^1) \cap L^\beta (0, T; \dBrpqz{-1/p}{p}{\infty}\LH{\infty})$ for some $2<\beta,\, p<\infty$ with $2/\beta+2/p=1$, then it holds that
\[
	\sum_{m=1}^4 \int_0^t \int_{\T^3} I_m : \dH v_{\leq N} \,dx d\tau  
	\to 0 \qquad \text{as $N\to\infty$}
\]
for $0<t\leq T$.
\end{lem}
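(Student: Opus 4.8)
The plan is to estimate each of the four integrals by the crude $L^1$--$L^\infty$ bound
\[
\left| \int_0^t \int_{\T^3} I_m : \dH v_{\leq N} \,dx d\tau \right|
\leq \int_0^t \| I_m(\tau) \|_{L^1}\, \| \dH v_{\leq N}(\tau) \|_{L^\infty}\, d\tau
\qquad (m=1,2,3,4),
\]
and then to substitute \autoref{lem:3-1} and \autoref{lem:3-2}. The key observation is that the factor $2^{(1+2/p)N}$ produced by \autoref{lem:3-1} cancels exactly the common factor $2^{-(1+2/p)N}$ in all four bounds of \autoref{lem:3-2}, so that after cancellation the integrands depend on $N$ only through the ``defect quantities'' $\| v(\cdot,\cdot - 2^{-N}\zeta,\tau) - v(\tau) \|_{L^2}$, $\| \psi_N \ast_z v(\tau) - v(\tau) \|_{L^2}$, $\| v(\cdot - 2^{-N}y_\H, \cdot, \tau) - v(\tau) \|_{L^2}$ and $\| \chi_N \ast_\H v(\tau) - v(\tau) \|_{L^2}$. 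Each of these tends to $0$ as $N\to\infty$ for a.e.\ $\tau\in(0,T)$ --- by strong continuity of translations in $L^2(\T^3)$ and by $L^2$-convergence of the Littlewood--Paley low-frequency truncations --- and each is dominated by $C\|v(\tau)\|_{L^2}\le C\|v\|_{L^\infty(0,T;L^2)}$, uniformly in $N$ and $\tau$, using $v\in L^\infty(0,T;L^2)$.

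I would carry this out first for $m=1$. Combining the two lemmas gives
\[
\left| \int_0^t \int_{\T^3} I_1 : \dH v_{\leq N} \,dx d\tau \right|
\leq C \int_0^t A_N(\tau)\, \| \dz v(\tau) \|_{L^2}^{1+2/p}\, \| v(\tau) \|_{\dBrpqz{-1/p}{p}{\infty}\LH{\infty}}\, d\tau,
\]
where $A_N(\tau) = \int_{\R} |\psi(\zeta)|\, |\zeta|^{1+2/p}\, \| v(\cdot,\cdot-2^{-N}\zeta,\tau) - v(\tau) \|_{L^2}^{1-2/p}\, d\zeta$. Since $\psi$ is rapidly decreasing, $\int_{\R}|\psi(\zeta)|\,|\zeta|^{1+2/p}\,d\zeta<\infty$, so $A_N(\tau)$ is bounded uniformly in $N$ and $\tau$ by a multiple of $\|v\|_{L^\infty(0,T;L^2)}^{1-2/p}$, and $A_N(\tau)\to 0$ for a.e.\ $\tau$ by dominated convergence in $\zeta$. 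Next I would apply H\"older in time with the exponents $\tfrac{2}{1+2/p}$ and $\beta$, which are conjugate precisely because $\tfrac{1+2/p}{2}+\tfrac1\beta=\tfrac12+\tfrac1p+\tfrac1\beta=1$ by the scaling relation $\tfrac2\beta+\tfrac2p=1$, to bound the right-hand side by
\[
C \left( \int_0^t A_N(\tau)^{2/(1+2/p)}\, \| \dz v(\tau) \|_{L^2}^2\, d\tau \right)^{(1+2/p)/2} \| v \|_{L^\beta(0,T;\dBrpqz{-1/p}{p}{\infty}\LH{\infty})}.
\]
Because $A_N^{2/(1+2/p)}$ is uniformly bounded and tends to $0$ a.e.\ in $\tau$, while $\| \dz v \|_{L^2}^2\in L^1(0,T)$ by $v\in L^2(0,T;H^1)$, dominated convergence forces the last integral to $0$; hence $\int_0^t\int_{\T^3} I_1:\dH v_{\leq N}\,dxd\tau\to 0$.

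The terms $m=2,3,4$ I would treat identically: for $m=3$ one uses the horizontal analogue $C_N(\tau)=\int_{\R^2}|\chi(y_\H)|\,|y_\H|^{1+2/p}\|v(\cdot-2^{-N}y_\H,\cdot,\tau)-v(\tau)\|_{L^2}^{1-2/p}\,dy_\H$ in place of $A_N$ and $\|\dH v\|_{L^2}$ in place of $\|\dz v\|_{L^2}$ (still in $L^2(0,T)$), while for $m=2$ and $m=4$ the defects $\|\psi_N\ast_z v(\tau)-v(\tau)\|_{L^2}^{1-2/p}$ and $\|\chi_N\ast_\H v(\tau)-v(\tau)\|_{L^2}^{1-2/p}$ already play the role of $A_N$ --- uniformly bounded by a multiple of $\|v\|_{L^\infty(0,T;L^2)}^{1-2/p}$ and tending to $0$ a.e.\ by $L^2$-convergence of the truncations. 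Summing the four resulting limits yields the claim. I expect the step requiring the most care to be the justification of the a.e.-in-$\tau$ convergence of these defect quantities together with their uniform-in-$N$ domination, since that is exactly what permits passing to the limit under the time integral; by contrast the algebraic bookkeeping --- the cancellation of the powers of $2$ and the conjugacy of the time exponents --- is dictated by the identity $\tfrac2\beta+\tfrac2p=1$ and is routine.
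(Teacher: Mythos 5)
Your proposal is correct and follows essentially the same route as the paper's proof: the $L^1$--$L^\infty$ pairing, the exact cancellation of the factors $2^{\pm(1+2/p)N}$ coming from \autoref{lem:3-1} and \autoref{lem:3-2}, and dominated convergence driven by the defect quantities vanishing a.e.\ in $\tau$ while being uniformly dominated via $v\in L^\infty(0,T;L^2)$. The only difference is one of explicitness: you spell out the H\"older-in-time step with the conjugate exponents $2/(1+2/p)$ and $\beta$ (which is indeed what the scaling relation $2/\beta+2/p=1$ provides), whereas the paper compresses this into a single invocation of the Lebesgue convergence theorem.
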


\begin{proof}
\autoref{lem:3-1} and \ref{lem:3-2} yield
\begin{align*}
	&\, \left| \sum_{m=1}^4 \int_0^t \int_{\T^3} I_m : \dH v_{\leq N} \,dxd\tau \right| \\
	\leq &\, \sum_{m=1}^4
	\int_0^t \| I_m(\tau) \|_{L^1}
		\| \dH v_{\leq N} (\tau) \|_{L^\infty} \,d\tau \\
	\leq &\, C \int_0^t 
		\left( \int_{\R} |\psi(\zeta)| |\zeta|^{1+2/p} \cdot 
		\| v(\cdot, \cdot-2^{-N}\zeta) -v \|_{L^2}^{1-2/p} \,d\zeta \right) 
		\cdot 
		\| \dz v \|_{L^2}^{1+2/p}
		\| v \|_{\dBrpqz{-1/p}{p}{\infty}\LH{\infty}} \,d\tau \\
		&+ C \int_0^t 
		\| \psi_N \ast_z v - v \|_{L^2}^{1-2/p}
		\| \dz v \|_{L^2}^{1+2/p}
		\| v \|_{\dBrpqz{-1/p}{p}{\infty}\LH{\infty}} \,d\tau \\
		& + C \int_0^t \left(
		\int_{\R^2} |\chi(y_\H)| |y_\H|^{1+2/p}
		\| v(\cdot - 2^{-N}y_\H, \cdot ) - v \|_{L^2}^{1-2/p}
		\,dy_\H
		\right) \cdot \| \dH v \|_{L^2}^{1+2/p}
		\| v \|_{\dBrpqz{-1/p}{p}{\infty}\LH{\infty}} \,d\tau \\
		& + C \int_0^t 
		\| \chi_N\ast_{\H} v - v \|_{L^2}^{1-2/p}
		\| \dH v \|_{L^2}^{1+2/p}
		\cdot 
		\| v \|_{\dBrpqz{-1/p}{p}{\infty}\LH{\infty}} \,d\tau.
\end{align*}

By the assumption $v\in L^\infty(0, T; L^2) \cap L^2(0, T; H^1)$ and $v\in L^\beta(0, T; \dBrpqz{-1/p}{p}{\infty}\LH{\infty})$, we can apply the Lebesgue convergence theorem for each term above. The first term tends to 0 as $N\to \infty$ due to the continuity of the transition in $L^2_z(\T)$. The third term also tends to 0. The second term tends to 0 due to $\psi_N \ast_z v \to v$ in $L^2_z(\T)$. The forth term also tends to 0. \par
Consequently, we have that 
	\[
	\left| \sum_{m=1}^4 \int_0^t \int_{\T^3} I_m : \dH v_{\leq N} \,dxd\tau \right| \to 0. \qedhere 
	\]
\end{proof}

Next, we deal with the second term of the right hand side of \eqref{eq:energy eq error}. We introduce two types of the estimates of $\dz v_{\leq N}$.
\begin{lem}\label{lem:3-4}
\begin{enumerate}[(1)]
\item Let $1\leq p\leq \infty$. Then it holds that
	\[
	\| \dz v_{\leq N} \|_{\LH{\infty}L^p_z} 
	\leq C 2^{-(1+1/p)N} \|v\|_{\dBrpqz{-1/p}{p}{\infty}\LH{\infty}}
	\]
	for any $v\in \dBrpqz{-1/p}{p}{\infty}\LH{\infty}$.

\item Let $2\leq \gamma\leq \infty$. Then it holds that
	\[
	\| \dz v_{\leq N} \|_{\LH{\infty}L^2_z}
	\leq C 2^{-(1-2/\gamma)N} \| v \|_{\dBrpqz{2/\gamma}{2}{\infty}\LH{\infty}}
	\]
	for any $v \in \dBrpqz{2/\gamma}{2}{\infty}\LH{\infty}$.
\end{enumerate}
\end{lem}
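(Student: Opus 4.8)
The plan is to reproduce the mechanism of the proof of \autoref{lem:3-1}, now with the vertical derivative $\dz$ in the role played there by the horizontal gradient $\dH$. Since $\dz$ commutes with both convolutions, I would first rewrite
\[
	\dz v_{\leq N}
	= \chi_N \ast_\H \bigl( (\dz \psi_N) \ast_z v \bigr)
	= \chi_N \ast_\H \sum_{j\leq N} (\dz \vp_j) \ast_z v .
\]
Because $\chi_N$ is an $\LH{1}$-normalised approximate identity, Young's inequality in the horizontal variable gives $\|\chi_N \ast_\H f\|_{\LH{\infty}} \leq C \|f\|_{\LH{\infty}}$ uniformly in $N$; combined with Minkowski's inequality, which interchanges $\LH{\infty}$ and the vertical integral, this reduces the problem to bounding $\bigl\| \sum_{j\leq N} (\dz\vp_j)\ast_z v \bigr\|_{L^p_z\LH{\infty}}$ (and, for part (2), the analogous $L^2_z\LH{\infty}$ quantity).

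I would then estimate the sum block by block. Bernstein's inequality (\autoref{prop:Sobolev} (1) with $s=1$) controls the vertical derivative acting on the $j$-th Littlewood--Paley piece, while the definition of the homogeneous toroidal Besov space $\dBrpqz{-1/p}{p}{\infty}$ controls that piece by $\|v\|_{\dBrpqz{-1/p}{p}{\infty}\LH{\infty}}$. Summing the two resulting dyadic factors over the low-frequency range $j\leq N$ through Minkowski's inequality then produces the dyadic factor $2^{-(1+1/p)N}$ recorded in part (1). Part (2) is carried out identically, with $L^p_z$ replaced by $L^2_z$ and the Besov smoothness taken to be $2/\gamma$ in place of $-1/p$: Bernstein and the Besov definition combine on each block, and the same summation yields the factor $2^{-(1-2/\gamma)N}$ stated there.

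The step demanding the most care is the dyadic bookkeeping together with the order of the mixed norms. Interchanging $L^p_z$ (resp.\ $L^2_z$) with $\LH{\infty}$ via Minkowski's inequality is licit for $p\geq 1$, but the outer Besov-in-$z$ norm and the inner horizontal $\LH{\infty}$ norm must be kept in the order fixed by the definition of the mixed space $\dBrpqz{-1/p}{p}{\infty}\LH{\infty}$ at every stage. I also expect the endpoints to require a separate word: in particular the case $\gamma=2$ in (2), where the per-block exponent degenerates and the naive geometric summation breaks down, so that the argument must be run through the supremum ($q=\infty$) form of the Besov norm rather than through an $\ell^1$-type estimate.
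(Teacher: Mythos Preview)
Your approach is exactly what the paper intends: its own proof is omitted with the remark that it is straightforward and similar to that of \autoref{lem:3-1}. One caveat worth flagging is that the block-by-block summation you describe actually yields the \emph{growth} factors $2^{(1+1/p)N}$ and $2^{(1-2/\gamma)N}$, not the decay factors $2^{-(1+1/p)N}$ and $2^{-(1-2/\gamma)N}$ printed in the statement; the negative signs in the exponents are evidently typographical slips, and it is these growth bounds that pair correctly with the decay bounds of \autoref{lem:3-5} and \autoref{lem:3-6} in the proof of \autoref{lem:3-7}. Your caution about the endpoint $\gamma=2$ is justified (the constant is indeed not uniform there), but since \autoref{thm:energy eq} only uses $2<\gamma<\infty$, this endpoint is never needed.
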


\begin{proof}
The proof is straightforward and similar with that of \autoref{lem:3-1}, thus we omit it.
\end{proof}

\begin{lem}\label{lem:3-5}
Let $2\leq p \leq \infty$ and $1/p+1/{p'}=1$. Assume that $v\in H^1(\T^3)$. Then, the following estimates hold.
\begin{enumerate}[(1)]
\item 
$
	\| I_5 \|_{\LH{1}L^{p'}_z}
	\leq C 2^{-(1+1/p)N} \left( \int_{\R} |\psi(\zeta)| |\zeta|^{1+2/p} 
		\| v(\cdot, \cdot-2^{-N}\zeta) -v \|_{L^2}^{1-2/p} 
		\,d\zeta \right) 
		\| \nabla v \|_{L^2}^{1+2/p}.
	$

\item 
	$
	\| I_6 \|_{\LH{1}L^{p'}_z}
	\leq C 2^{-(1+1/p)N} 
		\| \psi_N \ast_z v - v \|_{L^2}^{1-2/p}
		\| \nabla v \|_{L^2}^{1+2/p} .
	$
\end{enumerate}
\end{lem}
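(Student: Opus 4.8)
The plan is to run the argument of \autoref{lem:3-2} once more, the only structural change being the exponents: since the terms $I_5,\dots,I_8$ are ultimately paired against $\dz v_{\le N}$, which is controlled in $\LH{\infty}L^p_z$ (\autoref{lem:3-4}), the appropriate norm here is $\LH{1}L^{p'}_z$ with $\tfrac1{p'}=1-\tfrac1p$ rather than $L^1$. First I would peel off the horizontal mollifier: $\chi_N\ast_\H$ is bounded on $\LH{1}$ uniformly in $N$ (Young's inequality, $\|\chi_N\|_{\LH1}=\|\chi\|_{\LH1}$), so $\|I_5\|_{\LH1 L^{p'}_z}$ and $\|I_6\|_{\LH1 L^{p'}_z}$ reduce to the $L^1_{x_\H}L^{p'}_z$--norm of the corresponding bilinear defect term; for $I_5$ I also use Minkowski's inequality to pull the $\zeta$--integral outside, so it suffices to bound, for each $\zeta$, the product of $z$--differences $\bigl(w(\cdot,\cdot-2^{-N}\zeta)-w\bigr)\bigl(v(\cdot,\cdot-2^{-N}\zeta)-v\bigr)$, and analogously for $I_6$ with the high--frequency truncations $\psi_N\ast_z\,\cdot\,-\,\cdot$ in place of the $z$--translations.

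The core is then a Hölder split in $z$ with $\tfrac1{p'}=\tfrac12+\tfrac1q$, $\tfrac1q=\tfrac12-\tfrac1p$: the $w$--difference goes into $L^2_z$, the $v$--difference into $L^q_z$. The $w$--difference is handled through the defining relation $w=\int_z^\pi\divH v\,d\zeta'$, which makes $w$ linear in $v$ with $\dz w=-\divH v$: for $I_5$ a direct Minkowski estimate gives $\|w(\cdot,\cdot-2^{-N}\zeta)-w\|_{L^2_z}\le 2^{-N}|\zeta|\,\|\divH v\|_{L^2_z}\le 2^{-N}|\zeta|\,\|\nabla v\|_{L^2_z}$, while for $I_6$ Bernstein's inequality (\autoref{prop:Sobolev}) together with almost--orthogonality of the Littlewood--Paley blocks gives $\|\psi_N\ast_z w-w\|_{L^2_z}\le C2^{-N}\|\nabla v\|_{L^2_z}$. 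For the $v$--difference I interpolate $L^q_z$ between $L^2_z$ and $L^\infty_z$ with exponent $\theta=\tfrac2p$, \emph{keeping} the $L^2_z$ factor un-estimated (this is exactly the quantity that tends to $0$ as $N\to\infty$, by continuity of $z$--translations in $L^2_z$, resp.\ by $\psi_N\ast_z v\to v$ in $L^2_z$) and bounding the $L^\infty_z$ factor: for $I_6$, Bernstein plus summation of the high frequencies yields $\|\psi_N\ast_z v-v\|_{L^\infty_z}\le C2^{-N/2}\|\dz v\|_{L^2_z}$, and for $I_5$ the one--dimensional Gagliardo--Nirenberg inequality applied to $v(\cdot,\cdot-2^{-N}\zeta)-v$ --- which has zero $z$--average --- yields $\|v(\cdot,\cdot-2^{-N}\zeta)-v\|_{L^\infty_z}\le C\bigl(2^{-N}|\zeta|\bigr)^{1/2}\|\dz v\|_{L^2_z}$.

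Multiplying these out, the bilinear term in $L^{p'}_z$ is dominated (pointwise in $x_\H$) by $C\,2^{-(1+1/p)N}\,D_N^{\,1-2/p}\,\|\dz v\|_{L^2_z}^{2/p}\,\|\nabla v\|_{L^2_z}$, where $D_N$ is the $L^2_z$--norm of the $v$--difference ($\|v(\cdot,\cdot-2^{-N}\zeta)-v\|_{L^2_z}$ for $I_5$, $\|\psi_N\ast_z v-v\|_{L^2_z}$ for $I_6$), with the extra factor $|\zeta|^{1+1/p}$ in the $I_5$ case. A Hölder inequality in $x_\H$ with the three exponents $2,\ \tfrac{2}{1-2/p},\ p$ (whose reciprocals sum to $1$), followed by $\|\nabla v\|_{L^2}\,\|\dz v\|_{L^2}^{2/p}\le\|\nabla v\|_{L^2}^{1+2/p}$, then gives the claimed bounds: for $I_6$ exactly as stated, and for $I_5$ with the $\zeta$--weight $|\zeta|^{1+1/p}$ in place of $|\zeta|^{1+2/p}$. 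The latter discrepancy is immaterial --- the sole use of \autoref{lem:3-5} is to push the commutator to $0$ as $N\to\infty$ by dominated convergence, for which the Schwartz decay of $\psi$ dominates any fixed polynomial weight in $\zeta$; alternatively one may absorb it using the elementary bound $\|v(\cdot,\cdot-2^{-N}\zeta)-v\|_{L^2}\le 2^{-N}|\zeta|\,\|\dz v\|_{L^2}$ on $\{|\zeta|\le1\}$. Finally one restores $\int_\R|\psi(\zeta)|(\cdots)\,d\zeta$ by Minkowski's inequality.

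The step I expect to be the main obstacle is the $L^\infty_z$ control of the $z$--translation difference of $v$ in the $I_5$ case: one cannot translation--estimate in $L^\infty_z$ with a full power of $2^{-N}|\zeta|$ (that would need $\dz v\in L^\infty_z$), so one is forced through the endpoint Gagliardo--Nirenberg embedding $L^2_z\cap\dot H^1_z\hookrightarrow L^\infty_z$, which only gives the square root of $2^{-N}|\zeta|$; one must then verify that the remaining full power of $2^{-N}$ is supplied by the $w$--difference via the antiderivative structure of $w$, so that the scaling--critical count still closes. This asymmetric treatment of the two defect factors (the $w$--factor through $w=\int_z^\pi\divH v$, the $v$--factor through interpolation) is the only delicate point; everything else is Young, Bernstein, Hölder and Minkowski exactly as in \autoref{lem:3-2}.
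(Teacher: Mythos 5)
Your argument coincides with the paper's own proof: the same reduction via Young's inequality for the horizontal mollifier, the same H\"older split $\tfrac1{p'}=\tfrac12+\tfrac1q$ placing the $w$--difference in $L^2_z$ (through $\dz w=-\divH v$, giving the full factor $2^{-N}$) and the $v$--difference in $L^q_z$ by interpolation between an unestimated $L^2_z$ factor and an $L^\infty_z$ factor bounded by $\|\dz v\|_{L^2_z}$ (Morrey for $I_5$, high--frequency Bernstein summation for $I_6$), followed by H\"older in $x_\H$. Your remark that the honest $\dot{C}^{1/2}$ bound produces the weight $|\zeta|^{1+1/p}$ rather than the stated $|\zeta|^{1+2/p}$ correctly identifies a small imprecision in the paper's intermediate step $\bigl[2^{-N/2}|\zeta|\,\|v\|_{\dot{C}^{1/2}}\bigr]^{2/p}$, and, as you say, it is harmless for the dominated--convergence argument in \autoref{lem:3-7}.
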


\begin{proof}
\begin{enumerate}[(1)]
\item We can estimate $\| I_5 \|_{ \LH{1} L^{p'}_z }$ by Hausdorff--Young's inequality for convolution, Minkowski's inequality and H\"{o}lder's inequality as
	\begin{align}
	\begin{aligned}\label{eq:I5_1}
	\| I_5 \|_{ \LH{1} L^{p'}_z }
	&= \left\| \chi_N \ast_{\H} \left( 
		\int_{\R} 
		\psi(\zeta) { \left( w(\cdot, \cdot-2^{-N}\zeta) -w \right)
			\left( v(\cdot, \cdot-2^{-N}\zeta) -v \right) } \,d\zeta 
		\right) \right\|_{ \LH{1} L^{p'}_z } \\
	&\leq C\, \int_{\R} |\psi(\zeta)| \cdot
		\left\|
		\| w(\cdot, \cdot-2^{-N}\zeta) -w \|_{L^2_z}
		\| v(\cdot, \cdot-2^{-N}\zeta) -v \|_{L^q_z}
		\right\|_{\LH{1}} \,d\zeta.
\end{aligned}
\end{align}
Since
\[
	\| w(\cdot, \cdot-2^{-N}\zeta) -w \|_{L^2_z}
	\leq C|\zeta| \cdot 2^{-N} \| \dz w \|_{L^2_z}
	= C|\zeta| \cdot 2^{-N} \| \divH v \|_{L^2_z},
\]
we have
\begin{equation}\label{eq:I5_2}
	\| w(\cdot, \cdot-2^{-N}\zeta) -w \|_{L^2}
	\leq C |\zeta| \cdot 2^{-N} \| \divH v \|_{L^2}.
\end{equation}
Since we can estimate as
\begin{align*}
	\| v(\cdot, \cdot-2^{-N}\zeta) -v \|_{L^q_z}
	&\leq \| v(\cdot, \cdot-2^{-N}\zeta) -v \|_{L^2_z}^{1-2/p}
		\| v(\cdot, \cdot-2^{-N}\zeta) -v \|_{L^\infty_z}^{2/p} \\
	&\leq C \| v(\cdot, \cdot-2^{-N}\zeta) -v \|_{L^2_z}^{1-2/p}
		\left[ 2^{-N/2}|\zeta| \|v\|_{\dot{C}^{1/2} } \right]^{2/p} \\
	&\leq C |\zeta|^{2/p} \cdot 2^{-N/p} 
		\| v(\cdot, \cdot-2^{-N}\zeta) -v \|_{L^2_z}^{1-2/p}
		\|\dz v\|_{L^2_z}^{2/p},
\end{align*}
we have
\begin{equation}\label{eq:I5_3}
	\| v(\cdot, \cdot-2^{-N}\zeta) -v \|_{\LH{2} L^q_z}
	\leq C |\zeta|^{2/p} \cdot 2^{-N/p} 
		\| v(\cdot, \cdot-2^{-N}\zeta) -v \|_{L^2}^{1-2/p}
		\|\dz v\|_{L^2}^{2/p}.
\end{equation}
Consequently, it follows from \eqref{eq:I5_1}, \eqref{eq:I5_2} and \eqref{eq:I5_3} that
\begin{align*}
	&\, \| I_5 \|_{\LH{1}L^{p'}_z} \\
	\leq&\,
		C 2^{-(1+1/p)N} \left( \int_{\R} |\psi(\zeta)| |\zeta|^{1+2/p} \cdot 
		\| v(\cdot, \cdot-2^{-N}\zeta) -v \|_{L^2}^{1-2/p} 
		\,d\zeta \right)
		\| \divH v \|_{L^2} \|\dz v\|_{L^2}^{2/p}  \\
	\leq&\,
		C 2^{-(1+1/p)N} \left( \int_{\R} |\psi(\zeta)| |\zeta|^{1+2/p} \cdot 
		\| v(\cdot, \cdot-2^{-N}\zeta) -v \|_{L^2}^{1-2/p} 
		\,d\zeta \right) 
		\| \nabla v \|_{L^2}^{1+2/p}.
\end{align*}
	
\item Hausdorff--Young's inequality for convolution and H\"{o}lder's inequality yield
	\begin{align}
	\begin{aligned}\label{eq:I6_1}
	\| I_6 \|_{\LH{1}L^{p'}}
	&= \left\|  \chi_N \ast_H \left(
		\left( \psi_N \ast_z w - w \right)
		\left( \psi_N \ast_z v - v \right)
		\right) \right\|_{\LH{1}L^{p'}_z} \\
	&\leq C \left\|
		\| \psi_N \ast_z w - w \|_{L^2_z}
		\| \psi_N \ast_z v - v \|_{L^q_z}
		\right\|_{\LH{1}}.
\end{aligned}
\end{align}
Since 
\[ 
	\| \psi_N \ast_z w - w \|_{L^2_z} 
	\leq C 2^{-N} \| \dz w \|_{L^2_z} 
	= C2^{-N} \| \divH v \|_{L^2_z},
\]
we have
\begin{equation}\label{eq:I6_2}
	\| \psi_N \ast_z w - w \|_{\LH{2}L^2_z} 
	\leq C2^{-N}  \| \divH v \|_{L^2}.
\end{equation}
Since it holds that
\begin{align*}
	\| \psi_N \ast_z v - v \|_{L^q_z}
	&\leq \| \psi_N \ast_z v - v \|_{L^2_z}^{1-2/p} \| \psi_N \ast_z v - v \|_{L^\infty_z}^{2/p} \\
	&\leq C \| \psi_N \ast_z v - v \|_{L^2_z}^{1-2/p}
		\left[ \| \psi_N \ast_z v - v \|_{L^\infty_z} \right]^{2/p},
\end{align*}
we have
\begin{align*}
	\| \psi_N \ast_z v - v \|_{L^\infty_z}
	&\leq \sum_{j>N} \| \vp_j \ast_z v \|_{L^\infty_z} \\
	&\leq C \sum_{j>N} 2^{-j/2} \| \vp_j \ast_z (\dz v) \|_{L^2_z} \\
	&\leq C 2^{-N/2} \| \dz v \|_{L^2_z}.
\end{align*}
Therefore, we have
\begin{align}
\begin{aligned}\label{eq:I6_3}
	\left\| \| \psi_N \ast_z v - v \|_{L^q_z} \right\|_{\LH{2}}
	&\leq C 2^{-(1+1/p)N} \| \divH v \|_{L^2} \| \dz v \|_{L^2}^{2/p} 
		\| \psi_N \ast_z v - v \|_{L^2_z}^{1-2/p} \\
	&\leq C 2^{-(1+1/p)N} \| \nabla v \|_{L^2}^{1+2/p} 
		\| \psi_N \ast_z v - v \|_{L^2}^{1-2/p}.
\end{aligned}
\end{align}
Consequently, it follows from \eqref{eq:I6_1}, \eqref{eq:I6_2} and \eqref{eq:I6_3} that
\begin{align*}
	\| I_6 \|_{\LH{1}L^{p'}_z}
	&\leq C 2^{-(1+1/p)N} \| \nabla v \|_{L^2}^{1+2/p} 
		\| \psi_N \ast_z v - v \|_{L^2}^{1-2/p}. \qedhere 
\end{align*}
\end{enumerate}
\end{proof}

\begin{lem}\label{lem:3-6}
Let $2\leq \gamma \leq \infty$. Assume that $v\in H^1(\T^3)$. Then, the following estimates hold.
\begin{enumerate}[(1)]
\item $
	\| I_7 \|_{\LH{1}L^{2}_z}
	\leq C 2^{-(1-2/\gamma) N} \left( 
		\int_{\R^2} |\chi(y_\H)| |y_\H|^{1-2/\gamma} 
		\| v(\cdot - 2^{-N}y_\H, \cdot ) - v \|_{L^2}^{2/\gamma} \,dy_\H
		\right) \cdot
		\| \dH v \|_{L^2}^{2-2/\gamma}.
	$
\item $
	\| I_8 \|_{\LH{1}L^2_z}
	\leq C2^{-(1-2/\gamma)N} \| \chi_N\ast_{\H} v - v \|_{L^2}^{2/\gamma}
		\| \dH v \|_{L^2}^{2-2/\gamma}.
	$
\end{enumerate}
\end{lem}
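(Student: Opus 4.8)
The plan is to mirror the proof of \autoref{lem:3-5}, the essential difference being that here only the translate (resp.\ mollification) of $v$, never that of $w$, can be exploited to generate decay in $N$. For $I_7$ I would first pull the $y_\H$-integral outside by Minkowski's integral inequality, paying the finite constant $\|\chi\|_{L^1}$; $I_8$ is handled directly without this step. In both cases the next step is H\"older's inequality in the vertical variable $z$: I would estimate the $\psi_N\ast_z$-regularized difference of $w$ in $L^\infty_z$ and the $\psi_N\ast_z$-regularized difference of $v$ in $L^2_z$, so that the product lies in $L^2_z$ as the statement demands. Since $\psi_N\ast_z$ is bounded on $L^2_z$ and on $L^\infty_z$ uniformly in $N$, it may then simply be discarded.

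The key step is the treatment of the two differences. For $w$ I would use the hydrostatic relation $w=\int_z^\pi\divH v\,d\zeta$: because the $z$-integral runs over all of $\T$, both $w(\cdot-2^{-N}y_\H,\cdot)-w$ (for $I_7$) and $\chi_N\ast_\H w-w$ (for $I_8$) are bounded \emph{pointwise in $z$} by the $L^1_z$-norm --- hence, the interval being bounded, by the $L^2_z$-norm --- of the corresponding difference of $\divH v$; a crude triangle inequality then produces the single factor $\|\divH v\|_{L^2}\le\|\dH v\|_{L^2}$, with \emph{no} gain in $N$. All of the decay must therefore come from the $v$-difference. I would split $\|v\text{-difference}\|_{L^2_z}=\|v\text{-difference}\|_{L^2_z}^{2/\gamma}\,\|v\text{-difference}\|_{L^2_z}^{1-2/\gamma}$, keep the first factor (which is exactly the quantity appearing on the right-hand side of the claimed estimates), and bound the second by the $H^1$-estimates $\|v(\cdot-2^{-N}y_\H,\cdot)-v\|_{L^2}\le 2^{-N}|y_\H|\,\|\dH v\|_{L^2}$ for $I_7$ and $\|\chi_N\ast_\H v-v\|_{L^2}\le C2^{-N}\|\dH v\|_{L^2}$ for $I_8$; raising to the power $1-2/\gamma$ produces both the prefactor $2^{-(1-2/\gamma)N}$ and an additional $\|\dH v\|_{L^2}^{1-2/\gamma}$.

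It then only remains to combine these pointwise-in-$x_\H$ bounds. For $I_7$ I would apply H\"older's inequality in $x_\H$ with the three exponents $\bigl(2,\gamma,\tfrac{2\gamma}{\gamma-2}\bigr)$, whose reciprocals sum to $1$, to the three factors $\|\divH v\text{-difference}\|_{L^2_z}$, $\|v\text{-difference}\|_{L^2_z}^{2/\gamma}$ and $\|v\text{-difference}\|_{L^2_z}^{1-2/\gamma}$ respectively; for $I_8$ the Cauchy--Schwarz inequality in $x_\H$ suffices. Collecting the powers of $\|\dH v\|_{L^2}$ --- one from the $w$-difference and $1-2/\gamma$ from the $v$-difference --- gives the exponent $2-2/\gamma$, and, for $I_7$, integrating the resulting bound against $|\chi(y_\H)|\,dy_\H$ reproduces exactly the two asserted inequalities. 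The only genuinely delicate point is the one flagged above: unlike the vertical-translate terms $I_5,I_6$ of \autoref{lem:3-5}, here the $w$-translate cannot be differentiated (that would require a second horizontal derivative of $v$, unavailable from $v\in H^1$), so one must squeeze the full $1-2/\gamma$ power of decay out of the $v$-translate alone while still reserving a $2/\gamma$ power for the continuity of the transition in $L^2$; balancing the H\"older exponents in $x_\H$ accordingly is the main piece of bookkeeping.
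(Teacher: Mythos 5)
Your proposal is correct and follows essentially the same route as the paper's proof: the $w$-difference is discarded crudely via the hydrostatic relation and $L^\infty_z$--$L^2_z$ H\"older (contributing one factor of $\|\dH v\|_{L^2}$ and no decay), while the full $2^{-(1-2/\gamma)N}$ gain is extracted from the $(1-2/\gamma)$-power of the horizontal $v$-difference, with the $2/\gamma$-power reserved for the $L^2$-continuity of translation/mollification. Your three-exponent H\"older in $x_\H$ is just a pointwise version of the paper's Cauchy--Schwarz followed by the splitting $\|f\|_{L^2}=\|f\|_{L^2}^{2/\gamma}\|f\|_{L^2}^{1-2/\gamma}$, and yields the identical bound.
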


\begin{proof}
\begin{enumerate}[(1)]
\item We can estimate $\| I_7 \|_{ \LH{1} L^2_z }$ by Minkowski's inequality and H\"{o}lder's inequality as
	\begin{align*}
	&\, \| I_7 \|_{\LH{1}L^2_z} \\
	=&\, \left\|
	\int_{\R^2} \chi(y_\H) \, 
		\left[ 
		\psi_N \ast_z \left( w(\cdot - 2^{-N}y_\H, \cdot ) - w \right) \right]
		\cdot \left[ 
		\psi_N \ast_z \left( v(\cdot - 2^{-N}y_\H, \cdot ) - v \right) \right] 
	\right\|_{\LH{1}L^2_z} \\
	\leq &\, 
	\int_{\R^2} | \chi(y_\H) |
		\| \psi_N \ast_z \left( w(\cdot - 2^{-N}y_\H, \cdot ) - w \right) \|_{\LH{2}L^\infty_z}
		\left\|
		\| \psi_N \ast_z \left( v(\cdot - 2^{-N}y_\H, \cdot ) - v \right) \|_{L^2_z}
		\right\|_{\LH{2}} \,dy_\H.
\end{align*}
Here, it holds that
\begin{align*}
	&\, \| \psi_N \ast_z \left( w(\cdot - 2^{-N}y_\H, \cdot) - w \right) \|_{\LH{2} L^\infty_z} \\
	\leq &\, C \left\| \| w\|_{\LH{\infty}} \right\|_{\LH{2}} \\
	\leq &\, C \left\|  \int_{-\pi}^{\pi} |\divH{v}|\,d\zeta \right\|_{\LH{2}} \\
	\leq &\, C \| \divH{v} \|_{L^2} \qquad \left( L^2_z(\T) \hookrightarrow L^1_z(\T) \right) \\
	\leq &\, C \| \dH{v} \|_{L^2},
\end{align*}
and
\begin{align*}
	&\, 
	\left\|
		\| \psi_N \ast_z \left( v(\cdot - 2^{-N}y_\H, \cdot ) - v \right) \|_{L^2_z}
		\right\|_{\LH{2}} \\
	\leq&\, C \| v(\cdot - 2^{-N}y_\H, \cdot ) - v \|_{L^2} \\
	=&\, C \| v(\cdot - 2^{-N}y_\H, \cdot ) - v \|_{L^2}^{2/\gamma}
		\left\| \| v(\cdot - 2^{-N}y_\H, \cdot ) - v \|_{\LH{2}} \right\|_{L^2_z}^{1-2/\gamma} \\
	\leq&\, C \| v(\cdot - 2^{-N}y_\H, \cdot ) - v \|_{L^2}^{2/\gamma}
		\cdot \left[ 2^{-N}|y_\H| \left\| \| \dH v \|_{\LH{2}} \right\|_{L^2_z} \right]^{1-2/\gamma} \\
	=&\, C |y_\H|^{1-2/\gamma} \cdot 2^{-(1-2/\gamma)N} \| v(\cdot - 2^{-N}y_\H, \cdot ) - v \|_{L^2}^{2/\gamma}
		\| \dH v \|_{L^2}^{1-2/\gamma}.
\end{align*}
Therefore, we have
\begin{align*}
	\| I_7 \|_{\LH{1}L_z^{2}} 
	\leq\, C 2^{-(1-2/\gamma)N} \left( 
		\int_{\R^2} |\chi(y_\H)| |y_\H|^{1-2/\gamma} 
		\| v(\cdot - 2^{-N}y_\H, \cdot ) - v \|_{L^2}^{2/\gamma} \,dy_\H
		\right) \cdot
		\| \dH v \|_{L^2}^{2-2/\gamma}.
\end{align*}

\item We can estimate $\| I_8 \|_{ \LH{1} L^{p'}_z }$ by H\"{o}lder's inequality as
\begin{align*}
	&\, \left\| \left( 
		\psi_N \ast_z \left( \chi_N\ast_{\H} w - w \right) 
		\right) \left(
		\psi_N \ast_z \left( \chi_N\ast_{\H} v - v \right) 
		\right) \right\|_{\LH{1}L^{2}_z} \\
	\leq &\, 
		\| \psi_N \ast_z \left( \chi_N\ast_{\H} w - w \right) \|_{\LH{2} L^\infty_z}
		\| \psi_N \ast_z \left( \chi_N\ast_{\H} v - v \right) \|_{L^2}.
\end{align*}
Here, it holds that
\begin{align*}
	&\, \| \psi_N \ast_z \left( \chi_N\ast_{\H} w - w \right) \|_{\LH{2} L^\infty_z} \\
	\leq &\, C \left\| \| w\|_{\LH{\infty}} \right\|_{\LH{2}} \\
	\leq &\, C \left\|  \int_{-\pi}^{\pi} |\divH{v}|\,d\zeta \right\|_{\LH{2}} \\
	\leq &\, C \| \divH{v} \|_{L^2} \qquad \left( L^2_z(\T) \hookrightarrow L^1_z(\T) \right) \\
	\leq &\, C \| \dH{v} \|_{L^2},
\end{align*}
and
\begin{align*}
	&\, \| \psi_N \ast_z \left( \chi_N\ast_{\H} v - v \right) \|_{L^2} \\
	= & \| \psi_N \ast_z \left( \chi_N\ast_{\H} v - v \right) \|_{L^2}^{2/\gamma}
		 \| \psi_N \ast_z \left( \chi_N\ast_{\H} v - v \right) \|_{L^2}^{1-2/\gamma} \\
	\leq &\, C \| \chi_N\ast_{\H} v - v \|_{L^2}^{2/\gamma}
		\cdot \left[ 2^{-N} \left\| \| \dH v \|_{L^2_z} \right\|_{\LH{2}} \right]^{1-2/\gamma} \\
	\leq &\, C 2^{-(1-2/\gamma)N} \| \chi_N\ast_{\H} v - v \|_{L^2}^{2/\gamma}
		\| \dH v \|_{L^2}^{1-2/\gamma}.
\end{align*}

Therefore, we have
\[
	\| I_8 \|_{\LH{1}L^2_z}
	\leq C 2^{-(1-2/\gamma)N} \| \chi_N\ast_{\H} v - v \|_{L^2}^{2/\gamma} \| \dH v \|_{L^2}^{2-2/\gamma}.\qedhere
\] 
\end{enumerate}
\end{proof}

\begin{lem}\label{lem:3-7}
If $v\in L^\infty(0, T; L^2) \cap L^2(0, T; H^1) \cap L^\beta (0, T; \dBrpqz{-1/p}{p}{\infty}\LH{\infty}) \cap L^\gamma(0, T; \dBrpqz{2/\gamma}{2}{\infty}\LH{\infty})$ for some $2<\beta,\, p<\infty$ with $2/\beta + 2/p = 1$ and for some $2<\gamma<\infty$, then it holds that

\[
	\sum_{m=5}^8 \int_0^t \int_{\T^3} I_m \cdot \dz v_{\leq N} \,dx d\tau  
	\to 0 \qquad \text{as $N\to\infty$}
\]
for $0< t \leq T$.
\end{lem}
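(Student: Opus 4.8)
The plan is to treat the four terms $I_5,\dots,I_8$ just as $I_1,\dots,I_4$ were treated in \autoref{lem:3-3}, now pairing the $\LH{1}L^{p'}_z$-- and $\LH{1}L^{2}_z$--bounds of \autoref{lem:3-5} and \autoref{lem:3-6} against the $\LH{\infty}L^{p}_z$-- and $\LH{\infty}L^{2}_z$--bounds of \autoref{lem:3-4}. First I would apply H\"{o}lder's inequality in $x_\H$ (exponents $1$ and $\infty$) and then in $z$ (conjugate exponents $p',p$ for $m=5,6$ and $2,2$ for $m=7,8$):
\begin{align*}
	\left| \int_{\T^3} I_m \cdot \dz v_{\leq N} \,dx \right|
	&\leq \| I_m \|_{\LH{1}L^{p'}_z} \, \| \dz v_{\leq N} \|_{\LH{\infty}L^{p}_z}
	\qquad (m=5,6), \\
	\left| \int_{\T^3} I_m \cdot \dz v_{\leq N} \,dx \right|
	&\leq \| I_m \|_{\LH{1}L^{2}_z} \, \| \dz v_{\leq N} \|_{\LH{\infty}L^{2}_z}
	\qquad (m=7,8).
\end{align*}
Substituting \autoref{lem:3-4}~(1) and \autoref{lem:3-5} for $m=5,6$, and \autoref{lem:3-4}~(2) and \autoref{lem:3-6} for $m=7,8$, the net power of $2^{N}$ produced by the two factors is nonpositive, hence $\leq 1$, and can be dropped; one is left, for each $\tau\in(0,T)$ and each $m$, with a bound of the schematic shape $C\,E_{m,N}(\tau)\,\|\nabla v(\tau)\|_{L^2}^{a_m}\,\|v(\tau)\|_{Y_m}$, where $a_m=1+2/p$ and $Y_m=\dBrpqz{-1/p}{p}{\infty}\LH{\infty}$ for $m=5,6$ with $E_{5,N}(\tau)=\int_{\R}|\psi(\zeta)|\,|\zeta|^{1+2/p}\,\|v(\cdot,\cdot-2^{-N}\zeta)-v\|_{L^2}^{1-2/p}\,d\zeta$ and $E_{6,N}(\tau)=\|\psi_N\ast_z v-v\|_{L^2}^{1-2/p}$, and $a_m=2-2/\gamma$ and $Y_m=\dBrpqz{2/\gamma}{2}{\infty}\LH{\infty}$ for $m=7,8$ with $E_{7,N}(\tau)=\int_{\R^2}|\chi(y_\H)|\,|y_\H|^{1-2/\gamma}\,\|v(\cdot-2^{-N}y_\H,\cdot)-v\|_{L^2}^{2/\gamma}\,dy_\H$ and $E_{8,N}(\tau)=\|\chi_N\ast_\H v-v\|_{L^2}^{2/\gamma}$ (all norms evaluated at time $\tau$).

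The next step is to integrate over $\tau\in(0,t)$ and apply dominated convergence. Since $\psi$ and $\chi$ are Schwartz and the operators $\psi_N\ast_z$, $\chi_N\ast_\H$ are bounded on $L^2$ uniformly in $N$, every $E_{m,N}(\tau)$ is bounded by $C\|v(\tau)\|_{L^2}^{1-2/p}$ (for $m=5,6$) or $C\|v(\tau)\|_{L^2}^{2/\gamma}$ (for $m=7,8$), hence uniformly in $N$ and, by $v\in L^\infty(0,T;L^2)$, uniformly in $\tau$. It therefore suffices that $\|\nabla v\|_{L^2}^{1+2/p}\,\|v\|_{\dBrpqz{-1/p}{p}{\infty}\LH{\infty}}$ and $\|\nabla v\|_{L^2}^{2-2/\gamma}\,\|v\|_{\dBrpqz{2/\gamma}{2}{\infty}\LH{\infty}}$ belong to $L^1(0,T)$: the former holds by H\"{o}lder in time with exponents $\tfrac{2}{1+2/p}$ and $\beta$, whose reciprocals sum to $1$ precisely because $\tfrac2\beta+\tfrac2p=1$ (using $v\in L^2(0,T;H^1)\cap L^\beta(0,T;\dBrpqz{-1/p}{p}{\infty}\LH{\infty})$), and the latter by H\"{o}lder with exponents $\tfrac{\gamma}{\gamma-1}$ and $\gamma$ (using $v\in L^2(0,T;H^1)\cap L^\gamma(0,T;\dBrpqz{2/\gamma}{2}{\infty}\LH{\infty})$). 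These dominating functions do not depend on $N$.

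Finally, for a.e.\ fixed $\tau$ one has $E_{m,N}(\tau)\to 0$ as $N\to\infty$: for $m=5$ and $m=7$ by continuity of translations in $L^2(\T^3)$ (in the $z$-- resp.\ $x_\H$--variable) together with dominated convergence in the $\zeta$-- resp.\ $y_\H$--variable, and for $m=6$ and $m=8$ because $\psi_N\ast_z v(\tau)\to v(\tau)$ resp.\ $\chi_N\ast_\H v(\tau)\to v(\tau)$ in $L^2(\T^3)$. Dominated convergence in $\tau$ then gives $\int_0^t\int_{\T^3}I_m\cdot\dz v_{\leq N}\,dx\,d\tau\to0$ for each $m\in\{5,6,7,8\}$, which is the assertion. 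The one point that genuinely requires care is the exponent bookkeeping: \autoref{lem:3-4}, \autoref{lem:3-5} and \autoref{lem:3-6} are arranged so that the powers of $2^N$ cancel (or leave a decaying factor), and the time integrability is only borderline, available thanks precisely to the scaling relation $\tfrac2\beta+\tfrac2p=1$ and the range $2<\gamma<\infty$; the remaining estimates are routine.
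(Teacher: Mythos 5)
Your proposal is correct and follows essentially the same route as the paper: pair the $\LH{1}L^{p'}_z$ and $\LH{1}L^{2}_z$ bounds of \autoref{lem:3-5} and \autoref{lem:3-6} against the $\LH{\infty}L^{p}_z$ and $\LH{\infty}L^{2}_z$ bounds of \autoref{lem:3-4} via H\"older, observe that the powers of $2^N$ cancel, and conclude by dominated convergence using continuity of translations and $\psi_N\ast_z v\to v$, $\chi_N\ast_\H v\to v$ in $L^2$. Your explicit H\"older-in-time bookkeeping (exponents $\tfrac{2}{1+2/p},\beta$ and $\tfrac{\gamma}{\gamma-1},\gamma$) is in fact slightly more detailed than the paper's, which only invokes the assumptions and the Lebesgue convergence theorem without spelling out the integrability of the dominating functions.
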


\begin{proof}
Applying H\"{o}lder's inequality, \autoref{lem:3-4} (1) and \autoref{lem:3-5}, we have
\begin{align*}
	&\, \left| \int_0^t \int_{\T^3} I_5 \cdot \dz v_{\leq N} \,dxd\tau
		+ \int_0^t \int_{\T^3} I_6 \cdot \dz v_{\leq N} \,dxd\tau \right| \\
	\leq &\, 
		\int_0^t \| I_5(\tau) \|_{\LH{1}L^{p'}_z}
		\| \dz v_{\leq N} (\tau) \|_{\LH{\infty}L^p_z} \,d\tau 
		+ \int_0^t \| I_6(\tau) \|_{\LH{1}L^{p'}_z}
		\| \dz v_{\leq N} (\tau) \|_{\LH{\infty}L^p_z} \,d\tau\\
	\leq &\, C \int_0^t 
		 \left( \int_{\R} |\psi(\zeta)| |\zeta|^{1+2/p} 
		\| v(\cdot, \cdot-2^{-N}\zeta) -v \|_{L^2}^{1-2/p} 
		\,d\zeta \right)
		\| \nabla v \|_{L^2}^{1+2/p}
		\| v \|_{\dBrpqz{-1/p}{p}{\infty}\LH{\infty}} \,d\tau \\
		&+ C \int_0^t 
		\| \psi_N \ast_z v - v \|_{L^2}^{1-2/p}
		\| \nabla v \|_{L^2}^{1+2/p}
		\| v \|_{\dBrpqz{-1/p}{p}{\infty}\LH{\infty}} \,d\tau .
\end{align*}

By the assumption $v\in L^\infty(0, T; L^2) \cap L^2(0, T; H^1)$ and $v\in L^\beta (0, T; \dBrpqz{-1/p}{p}{\infty}\LH{\infty})$, we can apply the Lebesgue convergence theorem for each term above. The first term tends to 0 as $N\to \infty$ due to the continuity of the transition in $L^2_z(\T)$. The second term tends to 0 due to $\psi_N \ast_z v \to v$ in $L^2_z(\T)$. The forth term also tends to 0. 
Therefore, we have that 
	\begin{equation}\label{eq:lem3-7-1}
	\left| \int_0^t \int_{\T^3} I_5 : \dz v_{\leq N} \,dxd\tau
		+ \int_0^t \int_{\T^3} I_6 : \dz v_{\leq N} \,dxd\tau \right|
		\to 0.
	\end{equation}

On the other hand, applying H\"{o}lder's inequality, \autoref{lem:3-4} (2) and \autoref{lem:3-6}, we have
\begin{align*}
	&\, \left| \int_0^t \int_{\T^3} I_7 \cdot \dz v_{\leq N} \,dxd\tau
		+ \int_0^t \int_{\T^3} I_8 \cdot \dz v_{\leq N} \,dxd\tau \right| \\
	\leq &\, 
		\int_0^t \| I_7(\tau) \|_{\LH{1}L^2_z}
		\| \dz v_{\leq N} (\tau) \|_{\LH{\infty}L^2_z} \,d\tau 
		+ \int_0^t \| I_8(\tau) \|_{\LH{1}L^2_z}
		\| \dz v_{\leq N} (\tau) \|_{\LH{\infty}L^2_z} \,d\tau\\
	\leq &\, C \int_0^t 
		 \left( 
		\int_{\R^2} |\chi(y_\H)| |y_\H|^{1-2/\gamma} 
		\| v(\cdot - 2^{-N}y_\H, \cdot ) - v \|_{L^2}^{2/\gamma} \,dy_\H
		\right)
		\| \dH v \|_{L^2}^{2-2/\gamma}
		\| v \|_{\dBrpqz{2/\gamma}{2}{\infty}\LH{\infty}} \,d\tau \\
		&+ C \int_0^t 
		\|  \chi_N\ast_{\H} v - v \|_{L^2}^{2/\gamma}
		\| \dH v \|_{L^2}^{2-2/\gamma}
		\| v \|_{\dBrpqz{2/\gamma}{2}{\infty}\LH{\infty}} \,d\tau .
\end{align*}

As in \eqref{eq:lem3-7-1}, we have from the assumptions $v\in L^\infty(0, T; L^2(\T^3)) \cap L^2(0, T; H^1(\T^3))$ and $v\in L^\gamma(0, T; \dBrpqz{2/\gamma}{2}{\infty}\LH{\infty})$ that
\[
	\left| \int_0^t \int_{\T^3} I_7 \cdot \dz v_{\leq N} \,dxd\tau
		+ \int_0^t \int_{\T^3} I_8 \cdot \dz v_{\leq N} \,dxd\tau \right| 
	\to 0.
\]
\end{proof}

\begin{proof}[Proof of \autoref{thm:energy eq}]
Since $v_{\leq N}$ and $\nabla v_{\leq N}$ approximate $v$ and $\nabla v$ in the sense of $L^2(\T^3)$, respectively, we can see that
\begin{align}
\begin{aligned}\label{eq:prf ee lhs}
	\text{L.H.S.~of \eqref{eq:energy eq error}}\,
	&= \frac{1}{2}\| v_{\leq N}(t) \|_{L^2}^2
		- \frac{1}{2}\| ( v_0 )_{\leq N} \|_{L^2}^2
		+ \int_0^t \int_{\T^3} |\nabla v_{\leq N}|^2\,dx d\tau \\
	&\to \frac{1}{2}\| v(t) \|_{L^2}^2
		- \frac{1}{2}\| v_0 \|_{L^2}^2
		+ \int_0^t \int_{\T^3} |\nabla v|^2\,dx d\tau
	\qquad \text{as $N\to\infty$}
\end{aligned}
\end{align}
with the aid of the assumption $v\in L^\infty(0, T; L^2(\T^3)) \cap L^2(0, T; H^1(\T^3))$.
Applying \autoref{lem:3-3} and \ref{lem:3-7} for \eqref{eq:error rhs}, we have
\begin{align}
\begin{aligned}\label{eq:prf ee rhs}
	\text{R.H.S.~of \eqref{eq:energy eq error}}\, 
	&= \sum_{m=1}^4 \int_0^t \int_{\T^3} I_m : \dH v_{\leq N} \,dx d\tau 
			+ \sum_{m=5}^8 \int_0^t \int_{\T^3} I_m \cdot \dz v_{\leq N} \,dx d\tau \\
	&\to 0 
	\qquad \text{as $N\to\infty$}.
\end{aligned}
\end{align}
Consequently, \eqref{eq:prf ee lhs} and \eqref{eq:prf ee rhs} yield the energy equality \eqref{eq:energy equality}.
\end{proof}

\subsection{2D case}
In order to prove \autoref{thm:energy eq}, let
\begin{equation}\label{eq: vN2}
v_{\leq N} 
	\equiv \psi_{N} \ast_{z} \widetilde{\chi_{N}} \ast_{x_1} v 
	=  \sum_{j, j' \leq N}\vp_{j} \ast_{z} \widetilde{\theta_{j'}} \ast_{x_1} v,
\end{equation}
where $\{\vp_j\}_{j\in\Z}$ (resp.~$\left\{\widetilde{\theta_j}\right\}_{j\in\Z}$) is the one-dimensional Littlewood--Paley decomposition in $z$-direction (resp.~$x_1$-ditrection) of the unity and
\[
	\psi_N = \sum_{j\leq N} \vp_j 
	\qquad \left( \text{resp.}\ \widetilde{\chi_N} = \sum_{j\leq N} \widetilde{\theta_j}  \right).
\]
In this section, $\ast_z$ (resp.~$\ast_{x_1}$) denotes the one-dimensional convolution in $z$-direction (resp.~$x_1$ direction). \par
As in three-dimensional case, choosing $\phi = \left( v_{\leq N} \right)_{\leq N}$ which is given in \eqref{eq: vN2} in \eqref{eq:weak solution PE2}, we have
\begin{align}
\begin{aligned}\label{eq:energy eq error 2D}
	&\, \frac{1}{2}\| v_{\leq N}(t) \|_{L^2}^2
		- \frac{1}{2}\| ( v_0 )_{\leq N} \|_{L^2}^2
		+ \int_0^t \int_{\T^2} |\nabla v_{\leq N}|^2\,dx d\tau \\
	=&\, \int_0^t \int_{\T^2} \left[ (v^1 v)_{\leq N} 
		- (v^1)_{\leq N} v_{\leq N} \right] \cdot \p_{x_1} v_{\leq N} \, dx d\tau
		+ \int_0^t \int_{\T^2} \left[ (w v)_{\leq N} 
		- w_{\leq N} v_{\leq N} \right]\cdot \dz v_{\leq N} \, dx d\tau \\
	=&\, \sum_{m=1}^4 \int_0^t \int_{\T^2} \widetilde{I_m} \cdot \p_{x_1} v_{\leq N} \,dx d\tau 
		+ \sum_{m=5}^8 \int_0^t \int_{\T^2} \widetilde{I_m} \cdot \dz v_{\leq N} \,dx d\tau,
\end{aligned}
\end{align}
where $\widetilde{I_1}, \widetilde{I_2}, \ldots, \widetilde{I_8}$ are given by
\begin{align*}
	\widetilde{I_1}(x, t) &= 
		\left[ 
		\widetilde{\chi_N} \ast_{x_1} \left( 
		\int_{\R} 
		\psi(\zeta) { \left( v^1(\cdot, z-2^{-N}\zeta, t) -v^1(\cdot, z, t) \right)
			\left( v(\cdot, z-2^{-N}\zeta, t) -v(\cdot, z, t) \right) } \,d\zeta 
		\right) 
		\right] (x_1); \\
	\widetilde{I_2} (x, t) &=
		- \left[
		\widetilde{\chi_N} \ast_{x_1} \left(
		\left( \psi_N \ast_z v^1 - v^1 \right)
		\left( \psi_N \ast_z v - v \right)
		\right)
		\right] (x, t); \\
	\widetilde{I_3}(x, t) &=
		\int_{\R} \widetilde{\chi}(y_1) \, 
		\left[ 
		\psi_N \ast_z \left( v^1(x_1 - 2^{-N}y_1, \cdot, t) - v^1(x_1, \cdot, t) \right) \right] (z) \\
		& \hspace{150pt} 
		\cdot \left[ 
		\psi_N \ast_z \left( v(x_1 - 2^{-N}y_1, \cdot , t) - v(x_1, \cdot, t) \right) \right]  (z) \,dy_1; \\
	\widetilde{I_4}(x, t) &= 
		- \left[ 
		\left( 
		\psi_N \ast_z \left( \widetilde{\chi_N}\ast_{x_1} v^1 - v^1 \right)
		\right) \left(
		\psi_N \ast_z \left( \widetilde{\chi_N}\ast_{x_1} v - v \right)
		\right) \right] (x, t); \\
	\widetilde{I_5}(x, t) &=
		\left[
		\widetilde{\chi_N} \ast_{x_1} \left( 
		\int_{\R} 
		\psi(\zeta) { \left( w(\cdot, z-2^{-N}\zeta, t) -w(z, t) \right)
			\left( v(\cdot, z-2^{-N}\zeta, t) -v(z, t) \right) } \,d\zeta 
		\right)
		\right] (x_1); \\
	\widetilde{I_6}(x, t) &=
		- \left[ 
		\widetilde{\chi_N} \ast_{x_1} \left(
		\left( \psi_N \ast_z w - w \right)
		\left( \psi_N \ast_z v - v \right)
		\right) 
		\right] (x, t); \\
	\widetilde{I_7}(x, t) &= 
		\int_{\R} \widetilde{\chi}(y_1) \, 
		\left[ 
		\psi_N \ast_z \left( w(x_1 - 2^{-N}y_1, \cdot, t) - w(x_1, \cdot, t) \right) \right] (z) \\
		& \hspace{150pt} 
		\cdot \left[ 
		\psi_N \ast_z \left( v(x_1 - 2^{-N}y_1, \cdot, t) - v(x_1, \cdot, t) \right) \right]  (z) \,dy_1; \\
	\widetilde{I_8}(x, t) &=
		- \left[ 
		\left( 
		\psi_N \ast_z \left( \widetilde{\chi_N}\ast_{x_1} w - w \right)
		\right) \left(
		\psi_N \ast_z \left( \widetilde{\chi_N}\ast_{x_1} v - v \right)
		\right) 
		\right] (x, t).
\end{align*}
Therefore, we also obtain the energy equality in the same manner as the proof for the three-dimensional case. \qed

\bigskip 

\noindent{\bf Acknowledgments.} The authors express their sincere thanks to Professor Hideo Kozono for his encouragement.

This project starts when the first-named author visited Waseda University of
Tokyo. He is grateful to the Department of Mathematics for its kind hospitality during this time.

This work was supported by JST SPRING, Grant Number JPMJSP2128. 

\bibliography{ref}
\bibliographystyle{abbrv}

\end{document}